\newcommand{\C}{\ensuremath{\mathbb{C}}}
\newcommand{\IF}{\ensuremath{\mathbb{F}}}
\newcommand{\MM}{\ensuremath{\mathcal{M}}}
\newcommand{\IN}{\ensuremath{\mathbb{N}}}
\newcommand{\R}{\ensuremath{\mathbb{R}}}
\newcommand{\RR}{\ensuremath{\mathcal{R}}}
\newcommand{\MS}{\ensuremath{\mathcal{S}}}
\newcommand{\Z}{\ensuremath{\mathbb{Z}}}
\newcommand{\CP}{\ensuremath{\mathbb{P}}}
\newcommand{\RP}{\ensuremath{\mathbb{RP}}}
\newcommand{\ba}{\begin{align*}}
\newcommand{\ea}{\end{align*}}
\newcommand{\na}{\nabla}
\newcommand{\la}{\langle}
\newcommand{\ra}{\rangle}
\newcommand{\lc}{\left(}
\newcommand{\rc}{\right)}
\newcommand{\ep}{\epsilon}
\newcommand{\ka}{K\"ahler\,}
\newcommand{\Rm}{\ensuremath{\mathrm{Rm}}}
\newcommand{\Rc}{\ensuremath{\mathrm{Rc}}}
\newcommand{\KN}{\mathbin{\bigcirc\mspace{-14mu}\wedge\mspace{3mu}}}
\newcommand{\XX}{\mathcal{X}}
\newcommand{\tf}{\mathfrak{t}}
\renewcommand{\t}{\mathfrak{t}}
\newcommand{\Var}{\text{Var}}
\newcommand{\CF}{\mathfrak{C}}
\newcommand{\II}{\mathcal{I}}
 \newcommand{\xmark}{\ding{53}}
\newcommand*\owedge{\mathpalette\@owedge\relax}
\newcommand*\@owedge[1]{%
\mathbin{%
\ooalign{%
$#1\m@th\bigcirc$\cr
\hidewidth$#1\m@th\wedge$\hidewidth\cr
}%
}%
}
\def\ExtendSymbol#1#2#3#4#5{\ext@arrow 0099{\arrowfill@#1#2#3}{#4}{#5}}
\def\ExtendSymbol#1#2#3#4#5{\ext@arrow 0099{\arrowfill@#1#2#3}{#4}{#5}}
\newcommand\longright[2][]{\ExtendSymbol{-}{-}{\rightarrow}{#1}{#2}}
\def\Xint#1{\mathchoice
{\XXint\displaystyle\textstyle{#1}}%
{\XXint\textstyle\scriptstyle{#1}}%
{\XXint\scriptstyle\scriptscriptstyle{#1}}%
{\XXint\scriptscriptstyle\scriptscriptstyle{#1}}%
\!\int}
\def\XXint#1#2#3{{\setbox0=\hbox{$#1{#2#3}{\int}$ }
\vcenter{\hbox{$#2#3$ }}\kern-.55\wd0}}
\def\aint{\Xint-}
\numberwithin{equation}{section}
\newtheorem{thm}{Theorem}[section]
\newtheorem{prop}[thm]{Proposition}
\newtheorem{lem}[thm]{Lemma}
\newtheorem{rem}[thm]{Remark}
\newtheorem{defn}[thm]{Definition}
\newtheorem{exmp}[thm]{Example}
\title{On K\"ahler Ricci shrinker surfaces}
\author{Yu Li \quad and \quad Bing Wang}
\date{\today}
\begin{document}
\maketitle

\begin{abstract}
In this paper, we prove that any K\"ahler Ricci shrinker surface has bounded sectional curvature. By combining this estimate with previous research by numerous authors, we achieve a complete classification of all K\"ahler Ricci shrinker surfaces.
\end{abstract}

\tableofcontents

\section{Introduction}

A Ricci shrinker $(M^n, g, f)$ is a complete Riemannian manifold $(M^n,g)$ coupled with a smooth function $f$ satisfying
\begin{align}
\Rc+\text{Hess}\,f=\frac{1}{2}g \label{E100},
\end{align}
where the potential function $f$ is normalized so that
\begin{align}
R+|\nabla f|^2&=f \label{E101}.
\end{align}
Ricci shrinkers play a fundamental role in the study of singularities of the Ricci flow. For the type-I Ricci flow, it was proved by Enders-M\"uller-Topping \cite{EMT11} that any proper blowup sequence converges smoothly to a non-trivial Ricci shrinker. In general, Ricci shrinkers provide crucial insights into the ancient solutions that arise from blowup sequences of Ricci flows.

In the case that $\dim M=n \leq 3$, the classification of all Ricci shrinkers has been fully established (cf.~\cite{Ha95}\cite{Naber}\cite{NW}\cite{CCZ}, etc.). The complete list consists of $\mathbb{R}^2$, $S^2$, $\mathbb{R}^3$, $S^3$, $S^2\times \mathbb{R}$, as well as their corresponding quotients. Consequently, the natural focus of study shifts to the classification of Ricci shrinkers when $n=4$. Significant advancements have been made in this direction over the past decade. The majority of classification results, to the best of our knowledge, require certain assumptions on the curvature (cf. \cite{CC13}\cite{CW15}\cite{KW15}\cite{KW20}\cite{LN20}\cite{LNW18}\cite{MW17}\cite{Naber}\cite{Na19}\cite{N05}\cite{PW10}, etc.). In this paper, we drop all curvature assumptions in the K\"ahler category.

A Ricci shrinker $(M, g, f)$ is called a K\"ahler Ricci shrinker if it admits a K\"ahler structure $J$ such that $ \nabla f$ is a holomorphic vector field.
In complex coordinate charts, this means
\begin{align*}
R_{i\bar{j}} + f_{i\bar{j}}=\frac12 g_{i\bar{j}}, \quad f_{ij}=f_{\bar{i}\bar{j}}=0.
\end{align*}
K\"ahler manifolds of complex dimension $2$ are often referred to as K\"ahler surfaces. This paper focuses on K\"ahler Ricci shrinker surfaces $(M^2, g, J, f)$, where $2$ denotes the complex dimension (by a slight abuse of notation).

Assuming bounded curvature, a classification of such surfaces was previously obtained by Bamler-Cifarelli-Conlon-Deruelle~\cite{BCCD22}, building upon the foundational work of Munteanu-Wang~\cite{MW19} and Cifarelli~\cite{Cifarelli22}. However, it remains unclear whether the bounded-curvature assumption is necessary.  

In this paper, we remove the bounded-curvature assumption and establish a complete classification of all K\"ahler Ricci shrinker surfaces. (See also Theorem B of~\cite{BCCD22} for comparison.)

\begin{thm}[\textbf{Complete classification of K\"ahler Ricci shrinker surfaces}]
Let $(M^2,g,J,f)$ be a K\"ahler Ricci shrinker surface. Then $(M^2,g,J,f)$ is biholomorphic-isometric to one of the following manifolds.
\begin{enumerate}[label=\textnormal{(\roman{*})}]
\item A closed Fano surface with its unique K\"ahler Ricci shrinker metric;
\item The Gaussian soliton $(\C^2,g_E)$;
\item The FIK shrinker, constructed by Feldman-Ilmanen-Knopf \emph{\cite{FIK03}} on the blowup of $\C^2$;
\item The standard $(\CP^1 \times \C,g_c)$;
\item The BCCD shrinker, abstractly constructed by Bamler-Cifarelli-Conlon-Deruelle \emph{\cite{BCCD22}} on the blowup of $\CP^1 \times \C$.
\end{enumerate}
\label{thm:classification}
\end{thm}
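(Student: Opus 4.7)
My plan is to take the bounded sectional curvature estimate (the main technical contribution of this paper) as the central tool and use it to reduce the classification to several known structural results. First I would establish bounded $|\tRm|$ for every \ka Ricci shrinker surface. A bootstrap of Munteanu--Wang type reduces bounded curvature to bounded Ricci, and in the \ka category the shrinker equation couples $R_{i\bar{j}}$ directly to $f_{i\bar{j}}$, while $J\nabla f$ is Killing and therefore enforces strong symmetry on the level sets of $f$. The technical heart of the argument is a blow-up analysis that rules out concentration of $|\tRc|$ along these level sets: any would-be singularity is pulled back to an ancient Ricci flow limit, and the \ka identities, combined with the holomorphic Killing field, force the limit to be split enough to contradict the blow-up assumption.

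Once $|\tRm|$ is bounded on $M$, standard compactness and $\varepsilon$-regularity for shrinkers yield a clean asymptotic description of the ends. I would then split into the compact and the non-compact cases. If $M$ is compact, taking the $(1,1)$-part of the shrinker equation shows $c_1(M) > 0$, so $M$ is a del Pezzo surface, and by the combined work of Tian--Zhu, Wang--Zhu, and Cao, every del Pezzo carries a unique (up to biholomorphic isometry) \ka Ricci shrinker metric; this gives case (i).

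For non-compact $M$ with bounded curvature, the shrinker structure yields a tangent cone at infinity, and the Killing condition on $J\nabla f$ upgrades it to a \ka cone. In complex dimension two the only smooth \ka cones of the admissible volume growth are $\C^2$ and $\CP^1 \times \C$. If the asymptotic cone is $\C^2$, $M$ is either the Gaussian soliton on $\C^2$ itself or the FIK shrinker on the one-point blowup of $\C^2$; if the asymptotic cone is $\CP^1 \times \C$, $M$ is either the product $\CP^1 \times \C$ or the BCCD shrinker on its blowup. Uniqueness theorems for \ka Ricci shrinkers with a prescribed asymptotic cone (Conlon--Deruelle--Sun, Cifarelli, Bamler--Cifarelli--Conlon--Deruelle) then exhaust cases (ii)--(v).

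The main obstacle I expect is the very first step, the unconditional $|\tRm|$ bound. For real four-dimensional Ricci shrinkers the corresponding statement is open in general, so the proof cannot avoid the \ka hypothesis. It must make essential use of the holomorphic Killing symmetry generated by $J\nabla f$ together with the precise form of the $(1,1)$-part of the shrinker equation; once that estimate is in place, the remainder of the argument is largely a careful bookkeeping of existing existence and uniqueness results.
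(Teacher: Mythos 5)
Your high-level skeleton matches the paper's: establish an unconditional scalar curvature bound, deduce bounded $|\text{Rm}|$ via Munteanu--Wang, then quote the closed del~Pezzo classification and the noncompact dichotomy from Conlon--Deruelle--Sun and (Bamler--)Cifarelli--Conlon--Deruelle. However, the mechanism you sketch for the key estimate has a genuine gap, and your framing of the noncompact dichotomy is technically wrong.

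The gap is in how you propose to get a contradiction from a blow-up. You write that a would-be singularity blows up to an ancient K\"ahler Ricci flow, and that the holomorphic Killing field forces it to split and thereby contradict the blow-up assumption. But splitting gives no contradiction on its own: if you rescale by $R(x_i) \to \infty$ at high-curvature points, the limit satisfies $R_\infty(x_\infty) = 1$, and $\CP^1 \times \C$ (which does split a line) is a perfectly admissible limit. The paper's argument is genuinely different: it proves a Canonical Neighborhood Theorem with two alternatives (when $R \gtrsim f$ the rescaled shrinker is modeled on a K\"ahler steady soliton orbifold; when $R/f$ is small but $Rf$ is large it is modeled on the cylinder), then propagates these alternatives along the flow of $\nabla f/|\nabla f|^2$ to show that if $R$ ever exceeds $1+\delta$ on a level-set component then outside a compact set $R \gtrsim f$ on level sets, so $\aint_{\Sigma(0,\theta)} R \to \infty$. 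The contradiction is against the integral identity $\aint_{\Sigma(0,s)} R \le n/2$ coming from $R + \Delta f = n/2$. This integral estimate, and the level-set propagation scheme, are the crucial ingredients your sketch omits; without them the blow-up analysis does not close.

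Separately, your dichotomy between ``asymptotic cone $\C^2$'' and ``asymptotic cone $\CP^1 \times \C$'' is misphrased: $\CP^1 \times \C$ with the product metric is not a metric cone (its Gromov--Hausdorff tangent cone at infinity is $\C$, a dimension-dropping limit). The actual dichotomy, due to Cifarelli--Conlon--Deruelle and using the bounded curvature hypothesis, is $\lim_{x\to\infty} R = 0$ versus $\lim_{x\to\infty} R = 1$: the first case gives asymptotically conical shrinkers (Gaussian or FIK, with cone $\C^2$), the second gives asymptotically cylindrical shrinkers ($\CP^1\times\C$ or BCCD). The conclusion you reach is correct, but the reasoning as stated would mislead.
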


Let us briefly discuss the development and background of this classification.
We consider two cases for the complete manifold $M$: closed and non-compact. We will discuss each case separately.

(I) \textit{$M$ is closed.}

Since $M$ is closed, we have $c_1(M,J)=\frac{1}{2\pi} [\Rc]>0$, which implies that $(M, J)$ is a del Pezzo surface—a smooth Fano variety of dimension two. The classification of del Pezzo surfaces was established a long time ago (cf. \cite{Nagata}\cite{Manin}\cite{Hit75}). According to the classification, $(M,J)$ must be biholomorphic to either $\CP^1 \times \CP^1$ or $\text{Bl}_{k} \CP^2$, which represents the blowup of $k$ points on $\CP^2$ in general position, with $0 \leq k \leq 8$. Here, ``general position'' means that no three points are collinear, no six points lie on the same conic, and no eight points lie on a cubic curve with a double point at one of them. For $\text{Bl}_{k} \CP^2$ with $5 \leq k \leq 8$, the configuration of these $k$ points, modulo the $\mathrm{PSL}(3, \C)$ action, forms a moduli space of positive dimension. Consequently, infinitely many Fano complex structures exist on $\text{Bl}_{k} \CP^2$.

\begin{table}[H]
\centering
\vspace*{12mm}
\begin{tabular}{|c|c|c|c|c|c|}
\hline
\diagbox[dir=NW]{\rule{0mm}{10mm}Topology}{Properties} & Compact &Trivial & Toric & Moduli dimension \\ \hline
$\CP^2$ & $\checkmark$ & \checkmark &$\checkmark$ &0 \\ \hline
$\CP^1 \times \CP^1$ & $\checkmark$ & \checkmark& $\checkmark$ &0 \\ \hline
$\text{Bl}_{1} \CP^2$ & $\checkmark$ & \xmark & $\checkmark$ &0 \\ \hline
$\text{Bl}_{2} \CP^2$ & $\checkmark$ & \xmark & $\checkmark$ &0\\ \hline
$\text{Bl}_{3} \CP^2$ & \checkmark & $\checkmark$ &\checkmark&0 \\ \hline
$\text{Bl}_{4} \CP^2$ & $\checkmark$ & \checkmark & \xmark &0 \\ \hline
$\text{Bl}_{5} \CP^2$ & $\checkmark$ & \checkmark & \xmark &2 \\ \hline
$\text{Bl}_{6} \CP^2$& $\checkmark$ & \checkmark & \xmark &4 \\ \hline
$\text{Bl}_{7} \CP^2$ & $\checkmark$ & \checkmark & \xmark &6 \\ \hline
$\text{Bl}_{8} \CP^2$ & $\checkmark$ & \checkmark & \xmark &8 \\ \hline
$\C^2$ & \xmark & \xmark & \checkmark &0 \\ \hline
$\CP^1 \times \mathbb{C}$ & \xmark & \xmark & \checkmark &0 \\ \hline
$\text{Bl}_{1} \C^2$ & \xmark & \xmark & \checkmark &0 \\ \hline
$\text{Bl}_{1} (\CP^1 \times \C )$ & \xmark & \xmark & \checkmark &0 \\ \hline
\end{tabular}
\caption{Complete classification of K\"ahler Ricci shrinker surfaces}
\end{table}

On a closed K\"ahler Ricci shrinker surface $M$, the potential function $f$ can either be a constant or non-constant. If $f$ is constant, then $g$ represents a Fano K\"ahler-Einstein metric, i.e., a trivial K\"ahler Ricci shrinker metric. The study of K\"ahler-Einstein metrics on del Pezzo surfaces was extensively conducted in the 1980s-1990s (cf. \cite{TY87}\cite{Siu88}\cite{Tian90}). It was discovered that every del Pezzo surface $(M, J)$ admits a K\"ahler-Einstein metric, except when it is biholomorphic to $\text{Bl}_{1}\CP^2$ or $\text{Bl}_{2}\CP^2=\text{Bl}_{1} (\CP^1 \times \CP^1)$. Moreover, the K\"ahler-Einstein metric is unique up to the action of $Aut(M,J)$, as demonstrated by the work of Bando-Mabuchi~\cite{BandoMabuchi}. This uniqueness theorem has been generalized to the K\"ahler Ricci shrinker metric by Tian-Zhu \cite{TZ00}\cite{TZ02}.
Clearly, if one can establish the existence of K\"ahler Ricci shrinker metrics on $\text{Bl}_{1}\CP^2$ and $\text{Bl}_{1} (\CP^1 \times \CP^1)$, one can conclude that each del Pezzo surface admits a unique K\"ahler Ricci shrinker metric.
In fact, such shrinker metrics do exist. On $\text{Bl}_{1} \CP^2$, the Koiso-Cao K\"ahler Ricci shrinker metric can be found by solving an ordinary differential equation using Calabi's ansatz (cf. \cite{Koiso90}\cite{Cao96}).
On $\text{Bl}_{2} \CP^2$, the existence of such a metric was established by Wang-Zhu \cite{WZ04} through solving a Monge-Amp\`ere equation on a toric manifold.

In sum, each del Pezzo surface admits a unique K\"ahler Ricci shrinker metric.

(II) \textit{$M$ is noncompact.}

The classification of K\"ahler Ricci shrinker surfaces is also known when the scalar curvature of $M$ is bounded, i.e.,
\begin{align}
\sup_{M} R<+\infty. \label{eqn:sbound}
\end{align}
In fact, the curvature estimate established by Munteanu and Wang~\cite[Theorem 1.1]{MW15a} implies that under the condition \eqref{eqn:sbound}, $M$ has bounded Riemannian curvature:
\begin{align}
\sup_{M} |\Rm|<+\infty. \label{eqn:rmbound}
\end{align}
In view of (\ref{eqn:sbound}), the potential function $f$ does not have critical points outside a compact set, and the flow lines of $\nabla f$ can be parametrized by the level sets of $f$. Consequently, two possibilities arise: either the scalar curvature tends to zero along each flow line, or there exists a flow line along which the scalar curvature approaches a positive number.

By combining the curvature estimate (\ref{eqn:rmbound}) with the K\"ahler condition, Cifarelli, Conlon, and Deruelle \cite{CCD22} have demonstrated that the aforementioned dichotomy can be expressed more simply as: either $\displaystyle \lim_{x \to \infty} R(x)=0$ or $\displaystyle \lim_{x \to \infty} R(x)=1$.

$\bullet$ If $\displaystyle \lim_{x \to \infty} R(x)=0$, then $M$ is the Gaussian metric on $\C^2$ or the FIK metric on $\text{Bl}_{1} \C^2$. 

This result is established by Conlon, Deruelle, and Sun (cf. \cite{CDS19}). Here, we provide a brief outline of the key points for the readers' convenience. By applying the result of Munteanu and Wang \cite[Theorem 5.1]{MW15a}, it is deduced that in this case, the Riemannian curvature decays quadratically, and $M$ approaches an asymptotic metric cone with a smooth link. In Conlon, Deruelle, and Sun's work (cf. \cite[Theorem E (3)]{CDS19}), rigidity from complex analytic geometry is employed to demonstrate that the limiting cone must be biholomorphic to $\C^2$. Thus, by the adjunction formula, it follows that $M$ is biholomorphic to either $\C^2$ or $\text{Bl}_{1} \C^2$. In both cases, they establish that $g$ possesses $U(2)$-symmetry and prove the uniqueness of the shrinker metric through the work of Feldman, Ilmanen, and Knopf \cite{FIK03}. In the former case, $g$ is isometric to the Gaussian shrinker metric, while in the latter case, $g$ is isometric to the Feldman-Ilmanen-Knopf metric (cf. \cite{FIK03}).

$\bullet$ If $\displaystyle \lim_{x \to \infty} R(x)=1$, then $M$ is the standard $\CP^1 \times \C$, or the BCCD metric on $\text{Bl}_{1} (\CP^1 \times \C )$.

The classification of K\"ahler Ricci shrinker surfaces with $\displaystyle \lim_{x \to \infty} R(x)=1$ is completed in the works of Cifarelli-Conlon-Deruelle~\cite{CCD22} and Bamler-Cifarelli-Conlon-Deruelle~\cite{BCCD22}. In Cifarelli-Conlon-Deruelle \cite{CCD22}, it is established that $M$ is biholomorphic to either $\CP^1 \times \C$ or $\text{Bl}_{1} (\CP^1 \times \C )$. Moreover, in the former case, they prove that the metric must be isometric to the standard metric on $\CP^1 \times \C$. However, in the latter case, while uniqueness is established, the existence of the metric is still conjectural (cf. Conjecture 1.1 of \cite{CCD22}). In \cite{BCCD22}, Bamler, Cifarelli, Conlon, Deruelle study the blowup limit of a K\"ahler Ricci flow with toric symmetry, which corresponds to a non-compact K\"ahler Ricci shrinker. Utilizing divisor rigidity and toric symmetry (cf. \cite{CLW08}\cite{CW12}), it is determined that the infinity of this shrinker is $\CP^1 \times \C$, and it contains an exceptional curve. Therefore, it must correspond to the conjectural K\"ahler Ricci shrinker on $\text{Bl}_{1} (\CP^1 \times \C )$. It is an interesting problem to  find also a traditional construction by solving PDE, similar to the cases $\text{Bl}_{1} (\CP^2)$ and $\text{Bl}_{2} (\CP^2)$. 

From the preceding discussion, it is evident that the only remaining component needed to complete the classification is the scalar curvature estimate (\ref{eqn:sbound}) on each K\"ahler Ricci shrinker surface. This estimate is accomplished by the following theorem.

\begin{thm}[\textbf{Main result}]
\label{T102}
Any K\"ahler Ricci shrinker surface has bounded scalar curvature and consequently bounded sectional curvature. 
\end{thm}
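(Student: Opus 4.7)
The plan is to argue by contradiction via a rescaling and Cheeger--Gromov compactness argument. Since Munteanu--Wang already establishes that $\sup R < \infty$ forces $\sup |Rm| < \infty$, it suffices to bound the scalar curvature globally. Suppose to the contrary that $\sup_M R = +\infty$, and pick a sequence $x_i \in M$ with $\lambda_i := R(x_i) \to \infty$. By the quadratic asymptotics $f(x) \approx \tfrac14 d_g(x,p)^2$ of Haslhofer--M\"uller, such $x_i$ must escape to infinity. A Hamilton-style point-picking procedure then lets me assume in addition that $R \leq 2\lambda_i$ on the ball $B_g(x_i, r_i)$ for some $r_i$ with $r_i\sqrt{\lambda_i} \to \infty$.

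After rescaling by $g_i := \lambda_i g$, the shrinker equation reads
\[
Rc(g_i) + \mathrm{Hess}_{g_i} f \;=\; \frac{1}{2\lambda_i}\, g_i,
\]
with $R_{g_i}(x_i) = 1$ and $R_{g_i} \leq 2$ on the $g_i$-ball $B_{g_i}(x_i, r_i\sqrt{\lambda_i})$, whose radius tends to infinity. I would then extract a pointed limit $(M_\infty, g_\infty, J_\infty, x_\infty)$ from $(M, g_i, J, x_i)$. This requires two ingredients: (a) a pointwise local estimate of the form $|Rm|_g \leq C(1+R)$ on any K\"ahler Ricci shrinker surface, to turn the local bound on scalar curvature into a local bound on full curvature for $g_i$; and (b) Perelman-type $\kappa$-noncollapsing for the associated self-similar Ricci flow, to rule out volume collapse at the basepoint. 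Given these, Hamilton's compactness theorem produces a smooth pointed K\"ahler limit with $R_{g_\infty}(x_\infty) = 1$.

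To pass the rescaled equation to the limit, note that $|\mathrm{Hess}_g f|_g \leq \tfrac{\sqrt{n}}{2} + |Rc|_g$ is bounded wherever $R$ is, so $|\mathrm{Hess}_{g_i} f|_{g_i} = \lambda_i^{-1} |\mathrm{Hess}_g f|_g \to 0$, and the right-hand side $\tfrac{1}{2\lambda_i} g_i$ has $g_i$-norm $O(\lambda_i^{-1})$. Hence the limit satisfies $Rc(g_\infty) = 0$, so $(M_\infty, g_\infty, J_\infty)$ is a complete K\"ahler Ricci-flat surface; in particular $R_{g_\infty} \equiv 0$, contradicting $R_{g_\infty}(x_\infty) = 1$.

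The hard part will be establishing ingredient (a): a pointwise bound $|Rm| \leq C(1+R)$ on K\"ahler Ricci shrinker surfaces with constants independent of the solution. Munteanu--Wang's global implication ``bounded $R$ $\Rightarrow$ bounded $|Rm|$'' is not by itself enough; what is needed is a local version whose constants do not degenerate as the hypothesized global bound on $R$ blows up. Proving this presumably requires Bochner-type arguments specific to the K\"ahler four-dimensional setting, using $\Delta_f R = R - 2|Rc|^2$ together with its higher-order analogues, the decomposition of $Rm$ into self-dual, anti-self-dual, and trace parts in the K\"ahler case, and integral inputs (Gauss--Bonnet--Chern, signature, and $e^{-f}$-weighted $L^p$ bounds) upgraded to pointwise control by Moser iteration. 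Executing this estimate uniformly along the blow-up sequence, and ensuring compatibility with the $\kappa$-noncollapsing input, is what I expect to constitute the core technical content of the proof.
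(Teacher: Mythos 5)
Your blow-up scheme fails at the step where you conclude the limit is Ricci-flat. You write that $|\mathrm{Hess}_g f|_g \le \tfrac{\sqrt n}{2} + |Rc|_g$ is ``bounded wherever $R$ is,'' and therefore $|\mathrm{Hess}_{g_i} f|_{g_i}=\lambda_i^{-1}|\mathrm{Hess}_g f|_g \to 0$. But on the ball you have selected, $R_g$ is of order $\lambda_i$, and the only available bound on $|Rc|_g$ is in terms of $R_g$ itself; indeed at $x_i$ one even has the lower bound $|Rc_g|_g \ge R_g/\sqrt n = \lambda_i/\sqrt n$. Hence $|\mathrm{Hess}_g f|_g$ is typically of order $\lambda_i$, and $|\mathrm{Hess}_{g_i}f|_{g_i}$ is only $O(1)$, not $o(1)$. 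The Hessian survives the rescaling. After a suitable renormalization of the potential the limiting equation is the steady soliton equation
\begin{equation*}
Rc(g_\infty) + \mathrm{Hess}_{g_\infty} f_\infty = 0,
\end{equation*}
not $Rc(g_\infty)=0$. A non-flat steady soliton with $R(x_\infty)=1$ is perfectly consistent, so there is no contradiction, and the argument collapses. This is exactly why the paper's Theorem~\ref{T203}, after rescaling by $s_i=f(x_i)$ and setting $\tilde f_i := f - s_i$, produces a steady soliton \emph{orbifold} rather than a Ricci-flat space.

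Beyond this, the outline does not engage the central difficulty: there are two inequivalent blow-up regimes, and neither is eliminated by bare existence of a limit. The paper's proof distinguishes the regime $R(x)\gtrsim f(x)$, where the limit is a K\"ahler steady soliton orbifold (Theorem~\ref{T101}(A), via local conformal change, $L^2$ energy bounds and $\epsilon$-regularity), from the regime $R(x),R^{-1}(x)\ll f(x)$, where the limit is the ancient cylindrical K\"ahler flow on $\CP^1\times\C$ (Theorem~\ref{T101}(B), via Bamler's $\IF$-compactness, a line-splitting argument and the classification of ancient $\kappa$-noncollapsed K\"ahler flows of~\cite{CL20}). The actual contradiction is quantitative and global, not pointwise: if $R$ is unbounded, tracking $R$ along the flow of $\na f/|\na f|^2$ through these canonical neighborhoods forces $R\ge c f$ on the whole end, so the average of $R$ over $\Sigma(0,s)$ blows up, contradicting $\aint_{\Sigma(0,s)}R\le n/2$ (Lemma~\ref{lem:ave}) combined with the linear volume lower bound (Proposition~\ref{prop:volume}). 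Finally, your ``ingredient (a)'' --- a pointwise bound $|Rm|\le C(1+R)$ uniform over all K\"ahler Ricci shrinker surfaces --- is not proved in the paper and appears as hard as the theorem itself; the paper replaces it with $L^2$ curvature bounds and weak compactness, accepting orbifold singularities in the limit and handling them through $\epsilon$-regularity.
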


Bounding the scalar curvature is a significant problem in K\"ahler geometry. One notable contribution is made by G. Perelman, who established a uniform scalar curvature bound along the K\"ahler Ricci flow on Fano manifolds (cf. \cite{ST08}).

Theorem \ref{T102} presents a similar phenomenon to Perelman's scalar curvature bound, albeit with different background and proof techniques in our specific setting. It is worth noting that the scalar curvature bound in Theorem \ref{T102} assumes a complex dimension of $2$, although we conjecture that this assumption may be unnecessary.

The proof of Theorem \ref{T102} is based on the idea that if the scalar curvature tends to infinity, it does so uniformly. This, in turn, leads to the divergence of the scalar curvature average, which is a contradiction. The uniform behavior of the scalar curvature necessitates a thorough understanding of the geometry in the vicinity of points with high scalar curvature. This understanding is achieved through the following canonical neighborhood theorem.

\begin{thm}[\textbf{Canonical Neighborhood Theorem}]
\label{T101}
For any positive constants $A,\ep$ and $\delta \in (0,1)$, there exist positive constants $\sigma=\sigma(\ep,A)$ and $L=L(\ep,\delta,A)$ satisfying the following property.

Let $(M^2,g,J,f)$ be a K\"ahler Ricci shrinker surface with $\boldsymbol{\mu}(g) \ge -A$. Then the following statements hold for $x \in M$.
\begin{enumerate}[label=\textnormal{(\Alph{*})}]
\item If $R(x) \ge \delta f(x)$ and $f(x) \ge L$, then $(M,g,x)$ is $\ep$-close to a pointed K\"ahler steady soliton orbifold with isolated singularities. Moreover, $R(y) \ge L^{-1} f(x)$ for any $y \in B(x,\ep^{-1} R^{-\frac 1 2} (x))$.

\item If $R(x)+R^{-1}(x) \le \sigma f(x)$, then $(M,g,J,x)$ is $\ep$-close to the K\"ahler Ricci flow $( \CP^1 \times \C ,g_c(t):=g_{\CP^1}(t) \times g_E,J_c) $.
\end{enumerate}
\end{thm}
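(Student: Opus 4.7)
I prove both statements by a unified contradiction/compactness argument. If either (A) or (B) fails, one obtains a sequence of K\"ahler Ricci shrinker surfaces $(M_i, g_i, J_i, f_i)$ with $\boldsymbol{\mu}(g_i) \ge -A$ and points $x_i \in M_i$ satisfying the hypothesized bounds on $R$ and $f$, but failing $\ep$-closeness to the claimed model. The plan is to rescale by $\tilde g_i := R_i(x_i)\, g_i$ and extract a pointed limit $(M_\infty, g_\infty, J_\infty, x_\infty)$ of $(M_i, \tilde g_i, J_i, x_i)$, using the scale-invariant entropy bound to prevent collapsing and the real four-dimensional Chern--Gauss--Bonnet $L^2$-curvature inequality (sharpened by the shrinker identities) to force at worst isolated orbifold singularities in the limit.

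For (A), the assumption $R_i(x_i) \ge \delta f_i(x_i) \to \infty$ degenerates the rescaled shrinker equation $\widetilde{Rc}_i + \widetilde{\nabla}^2 f_i = \frac{1}{2R_i(x_i)} \tilde g_i$ to the K\"ahler steady soliton equation $Rc_\infty + \nabla^2 f_\infty = 0$ in the limit. The shrinker identity $R + |\nabla f|^2 = f$ gives $|\tilde\nabla f_i|^2_{\tilde g_i}(x_i) = f_i(x_i)/R_i(x_i) - 1 \le \delta^{-1} - 1$, so the normalized potentials $f_i - f_i(x_i)$ have bounded gradient at the basepoint; the shrinker trace equation $\Delta f = 2 - R$ together with the Bochner formula for $|\nabla f|^2$ then propagate this to uniform local Hessian and curvature control. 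Combining the authors' prior bounded-entropy shrinker compactness with Cheeger--Tian/Anderson orbifold theory yields a pointed K\"ahler steady Ricci soliton orbifold limit with isolated singularities, contradicting the failure of $\ep$-closeness. The ``moreover'' clause follows from the strong maximum principle applied to $\Delta R - \langle \nabla R, \nabla f \rangle = -2|Rc|^2 \le 0$: since $R_\infty(x_\infty) = 1 > 0$, scalar curvature of the limit soliton cannot vanish on the fixed rescaled ball $B(x_\infty, \ep^{-1})$, giving a definite lower bound $\tilde R_\infty \ge c(\ep,\delta,A) > 0$ that translates back to $R(y) \ge c(\ep,\delta,A)\,\delta\, f(x) \ge L^{-1} f(x)$ for $L$ large.

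For (B), the hypothesis $R_i(x_i) + R_i(x_i)^{-1} \le \sigma_i f_i(x_i)$ with $\sigma_i \to 0$ forces $f_i(x_i)/R_i(x_i) \to \infty$ while $\tilde R_i(x_i) = 1$ after the same rescaling. I would pass to the associated self-similar K\"ahler Ricci flow $g_i(t)$ generated by $\nabla f_i$ and apply Bamler's $\mathbb{F}$-compactness, obtaining a pointed ancient K\"ahler Ricci flow limit $g_\infty(t)$ with $R_\infty(x_\infty, 0) = 1$. The rescaled shrinker identity yields $|\tilde\nabla f_i|_{\tilde g_i}(x_i) \to \infty$, so after an additional normalization of $f_i$ by its own gradient at $x_i$ the potential converges to an affine function, producing an isometric $\R$-splitting (Cheeger--Colding) that is upgraded to a $\C$-splitting by K\"ahlerity, in the spirit of the splitting analysis of Cifarelli--Conlon--Deruelle~\cite{CCD22}. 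The complementary complex one-dimensional factor is a compact K\"ahler Einstein manifold with scalar curvature $1$, hence must be $(\CP^1, g_{\CP^1}(t))$, and uniqueness of the Ricci flow identifies the limit as $(\CP^1 \times \C, g_c(t), J_c)$, the desired contradiction. The principal obstacle throughout is the passage from pointwise bounds at $x_i$ to pointed orbifold/Ricci flow convergence on a definite scale: ruling out curvature concentration except at isolated points via four-dimensional $\ep$-regularity, and, in part (B), executing the K\"ahler splitting rigidity to extract the $\C$-factor from the unbounded gradient direction of $f$; I would rely on the authors' prior bounded-entropy shrinker compactness, Bamler's $\mathbb{F}$-compactness for Ricci flows, and Cheeger--Tian/Anderson orbifold theory to complete the argument.
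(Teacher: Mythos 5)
Your approach for part (A) is essentially the paper's: rescale (you use $R(x_i)$, the paper uses $f(x_i)$, which are comparable by the hypothesis $R(x_i)\ge\delta f(x_i)$), invoke $\kappa$-noncollapsing from the entropy bound, and extract a pointed steady soliton orbifold limit with isolated singularities. Two technical deviations deserve mention. First, the local $L^2$ curvature bound does not come from Chern--Gauss--Bonnet (which would require control on the Euler characteristic); instead the paper derives it from the shrinker/steady identities combined with a conformal change $\bar g_i=e^{-\tilde f_i}\tilde g_i$, a uniform isoperimetric/Sobolev inequality, and the Cheeger--Naber $L^2$-curvature estimate. Second, for the ``moreover'' clause your strong-maximum-principle argument is the right idea but has a hidden gap: the bad point $y_i$ with small $R(y_i)$ could converge to an orbifold singular point, where the pointwise limit $R_\infty$ is not directly seen by the $\hat C^\infty$-convergence. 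The paper addresses this with a parabolic Moser iteration (Proposition~\ref{prop:lower}) showing that the set where scalar curvature is small has positive space-time measure, hence must limit onto regular points, after which strong maximum principle applies.

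Part (B) is where your sketch has serious gaps. You write that Bamler's $\IF$-compactness yields ``a pointed ancient K\"ahler Ricci flow limit $g_\infty(t)$''. That is not what $\IF$-compactness gives: the limit is a metric flow pair that generically has a singular set $\MS$ of positive $*$-Minkowski dimension (Theorem~\ref{Thm_limit}), and the convergence is a priori only $\IF$, which is weaker than pointed Cheeger--Gromov. Establishing that $\MS=\emptyset$ and upgrading to smooth convergence is a central nontrivial step (Claim~3 of Theorem~\ref{thm:canB}): it hinges on the K\"ahler structure producing the parallel unit vector field $W=J_\infty(\na f_\infty)$ on the regular part, a tangent-flow analysis at would-be singular points, and a topological argument. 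Second, your statement that ``the complementary complex one-dimensional factor is a compact K\"ahler Einstein manifold with scalar curvature $1$, hence must be $\CP^1$'' is unjustified: after the $\R$-splitting there is no a priori reason for the $3$-dimensional factor to split off a compact $2$-dimensional Einstein piece; a priori it could be a noncompact $\kappa$-solution (e.g.\ a cigar or a Bryant-type factor). The paper instead shows the $3$-dimensional Ricci flow spacetime has nonnegative sectional curvature via a localized Hamilton--Ivey pinching (Proposition~\ref{prop_3dRFST}), obtains bounded curvature on time slices via a point-picking/Toponogov argument plus two-sided pseudolocality (Theorem~\ref{thm:pseudo}), and only then invokes the classification of $\kappa$-noncollapsed ancient K\"ahler Ricci flows with a splitting factor in complex dimension two (\cite{CL20}) to pin down $\CP^1\times\C$; nonflatness is guaranteed by the normalization $R_{\tilde g_i}(x_i,-\tau_i)=1$ built into the choice of the rescaling scale $r_i$. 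Your outline elides all of this. Finally, the rescaling in (B) is not by $R(x_i)$ alone but by a spacetime curvature scale $r_i$ chosen from the associated Ricci flow (equation~\eqref{E405}); proving this scale is comparable to the natural one, i.e.\ $s_i^{-1}(r_i^2+r_i^{-2})\to 0$ (Claim~1), itself uses part (A) and the diffeomorphism-flow estimates~\eqref{E407c}--\eqref{E407e}, and is not automatic.
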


Theorem~\ref{T101} serves as the technical core of this article. It is important to note that this result holds uniformly. The quantity $\boldsymbol{\mu}(g)$ represents Perelman's functional (cf. equation (\ref{eqn:mufunctional})).

The notion of being $\epsilon$-close to a K\"ahler steady soliton orbifold or $( \CP^1 \times \C ,g_c(t),J_c) $ can be found in Definition \ref{def:close2} and Definition \ref{def:close1}, respectively. In essence, it implies that a neighborhood around the base point is close to a K\"ahler steady soliton orbifold in the Gromov-Hausdorff sense or close to $\CP^1 \times \mathbb{C}$ in the smooth sense, upon appropriate rescaling.

These two situations are distinguished by whether the ratio $R/f$ at the base point is small. To provide an overview of the proof of Theorem \ref{T101}, although at the cost of sacrificing some rigorous definitions, we can summarize it as follows.

\begin{proof}[Outline of proof of Theorem~\ref{T101}]
Note that both parts (A) and (B) can be reformulated as the weak-compactness of K\"ahler Ricci shrinkers after appropriate rescaling. Part (A) states that for any sequence of points $x_i$ tending to infinity, with the ratio $R(x_i)/f_i(x_i)$ bounded from below, rescaling the sequence by the scalar curvature $R(x_i)$ leads to convergence of the pointed space sequence at $x_i$ to a steady soliton orbifold in the Gromov-Hausdorff topology. Furthermore, this convergence can be enhanced to the smooth topology away from singularities. Part (B) demonstrates that if $x_i$ tends to infinity and $(R(x_i) + R^{-1}(x_i))/f_i(x_i)$ tends to zero, then a suitably rescaled spacetime converges to a cylindrical K\"ahler Ricci flow in the smooth topology.

It is worth noting that there are weak-compactness theories for solitons available in the literature, such as those presented in \cite{HM11}, \cite{HM15}, \cite{LLW21}, and \cite{Bam20c}. However, none of these theories can be directly applied in the current context. Significant improvements of these weak-compactness theories are necessary to derive meaningful geometric conclusions.

The proof of part (A) is an improvement of the earlier weak-compactness theories in \cite{HM15} and \cite{LLW21}, where $f_i(x_i)$ are uniformly bounded and the scales remain fixed. In the current scenario, we are addressing the case where $f_i(x_i)$ tends to infinity and the scales involved are very small. Due to this scale consideration, the uniform no-local-collapsing theorem proved in \cite{LW20} plays a crucial role in our proof. Additionally, we modify the local conformal transformation technique from \cite{LLW21} to achieve bootstrapping and improve regularity.

The proof of part (B) is more involved. Our proof utilizes the $\IF$-compactness developed by Bamler. However, the $\IF$-compactness typically deals with a weak topology, similar to how the compactness in the mean curvature flow theory is handled by the Brakke flow \cite{Brakke}. To improve the convergence in the smooth topology, significantly more effort is required.

To achieve this, we rely on the following key ingredients:

\begin{itemize} 
\item The first is the local scalar curvature estimate, which originates from our previous work~\cite{LLW21}. This estimate ensures control over points with bounded reduced distance (in the sense of Perelman~\cite{Pe1}) to given points or related $H$-center in the terminology of~\cite{Bam20a}.
\item The second is obtaining a splitting direction of the limiting flow, thus reducing the dimension of the flow by one. The splitting direction comes from $\nabla f$ after proper rescaling. It is important to note that we analyze the estimate of $\nabla f$ in spacetime, rather than solely on a fixed time slice.

\item The third is to demonstrate that the limiting flow corresponds to a conventional ancient Ricci flow solution. By combining the symmetry induced by $J$ with the tangent flow argument, we are able to eliminate the singularities in the limiting flow. Additionally, employing a localized maximum principle argument similar to the one used in \cite{CBL07}, we establish that the limiting flow possesses nonnegative sectional curvature.

Using Toponogov's comparison geometry, as exemplified in techniques employed in \cite{Pe1}, we deduce that the limiting flow has bounded sectional curvature on each time slice. Consequently, the flow can be classified as a conventional ancient K\"ahler Ricci flow.

\item The fourth involves the classification of ancient K\"ahler Ricci flow solutions of complex dimension two, as developed by the first-named author and their collaborator \cite{CL20}. It is established that any non-flat ancient K\"ahler Ricci flow solution with an $\mathbb{R}$-factor splitting must be $(\CP^1 \times \mathbb{C}, g_c(t), J_c)$, which represents the canonical ancient solution on the cylinder $\CP^1 \times \mathbb{C}$.
\end{itemize}

The proof of part (B) becomes relatively straightforward when considering the key ingredients described above. We start with the given sequence of pointed K\"ahler Ricci shrinkers $(M_i, g_i, J_i, f_i, x_i)$ and study the associated pointed K\"ahler Ricci flows $(M_i, g_i(t), J_i, (x_i, 0))_{t \leq 0}$.

Treating the pointed K\"ahler Ricci flow as a metric flow coupled with the conjugate heat flow from $(x_i, 0)$, we appropriately rescale the metric flow pairs to demonstrate $\IF$-convergence to a limiting metric flow pair (cf. Theorem \ref{Fcom}). However, the limiting metric flow typically contains singularities, and the $\IF$-convergence is weaker than Gromov-Hausdorff convergence.

To eliminate the singular set and improve the convergence to the smooth topology, we utilize the local scalar curvature estimate, which partially ensures compatibility between $\IF$-convergence and pointed Gromov-Hausdorff convergence. Additionally, considering the sufficiently small ratio $R/f$, we construct a splitting direction induced by $\nabla f_i$ on the limiting metric flow.

Once we establish that the limiting metric flow exhibits a line splitting, we utilize the K\"ahler structure and various techniques to prove that the singular set is empty. This further confirms that the limiting metric flow corresponds to a conventional ancient K\"ahler Ricci flow solution with nonnegative bisectional curvature. In particular, the limiting metric flow is identified as $(\CP^1 \times \mathbb{C}, g_c(t))$, and the convergence is smooth.
\end{proof}

From the aforementioned discussion, it is evident that part (B) can be interpreted as a theorem focused on removing singularities and improving regularity. This approach follows a classical strategy employed in various theories. For instance, in the context of mean curvature flow theory, a similar strategy is employed in \cite{LHZW19}. By utilizing Brakke flow compactness, the central aspect of the proof in \cite{LHZW19} involves enhancing convergence to be smooth away from a curve. Subsequently, $L$-stability is applied to eliminate the singular curve and ensure regularity.

Based on Theorem~\ref{T101}, we are now prepared to provide a more detailed explanation of the proof of Theorem \ref{T102}.

\begin{proof}[Outline of proof of Theorem \ref{T102}]
Consider a complete K\"ahler Ricci shrinker surface $(M, g, f, J, p)$, where $p$ is a point at which $f$ attains its minimum value. The crucial step is to examine the behavior of the scalar curvature $R$ along the flow $\phi^s$ generated by $\nabla f/|\nabla f|^2$.

Suppose $\Sigma$ is a connected component of some level set $\Sigma(s_0)$ of $f$. If the scalar curvature $R$ remains smaller than $1$ on $\phi^s(\Sigma)$ for all $s \geq s_0$, then the union of these level sets, $\bigcup_{s \geq s_0} \phi^s(\Sigma)$, forms an end of the K\"ahler Ricci shrinker. Since the end is unique (cf. Theorem \ref{thm:oneend}), this implies that the scalar curvature $R$ is bounded.

Alternatively, if the scalar curvature $R$ exceeds $1$ by a detectably small amount at a certain point on $\Sigma$, we can demonstrate that any point on $\Sigma$ along the flow $\phi^s$ will eventually satisfy the conditions of Theorem \ref{T101}(A). Consequently, outside a compact set, the scalar curvature is comparable to $f$.

It is worth noting that as $s$ approaches infinity, the volume of the ball $B(p, \sqrt{s})$ is at least $\epsilon \sqrt{s}$; see Proposition \ref{prop:volume} and the relevant reference. Consequently, the average of the scalar curvature over $B(p, \sqrt{s})$ tends to infinity, which contradicts the integration of the equation $R+\Delta f=2$.
\end{proof}

This paper is organized as follows. Section 2 discusses the elementary properties of Ricci shrinkers and their associated Ricci flows. In Section 3, we prove part (A) of the canonical neighborhood theorem, which is modeled on a steady Ricci soliton orbifold.
Section 4 focuses on part (B) of the canonical neighborhood theorem, which is modeled on the canonical ancient K\"ahler Ricci flow solution on $\CP^1 \times \C$. Building upon the canonical neighborhood theorem, we establish the proof of Theorem \ref{T102} in Section 5.
\\
\\
{\bf Acknowledgements}: 

Yu Li is supported by YSBR-001, NSFC-12201597, and research funds from USTC (University of Science and Technology of China) and CAS (Chinese Academy of Sciences). Bing Wang is supported by YSBR-001, NSFC-11971452, NSFC-12026251, NSFC-12431003, and research funds from USTC. The authors thank Prof. Xiuxiong Chen, Prof. Jingchen Hu, Prof. Claude LeBrun, and Prof. Song Sun for their valuable insights and comments. The authors also appreciate the anonymous referees for their thoughtful feedback, which improved the clarity and presentation of the paper.

\section{Preliminaries}
A K\"ahler Ricci shrinker $(M^m, g,J, f)$ is a Ricci shrinker on a K\"ahler manifold $(M^m,g,J)$, where we denote the complex structure by $J$, the complex dimension by $m$ and the real dimension by $n=2m$. Under local holomorphic coordinates, \eqref{E100} is equivalent to
\begin{align} \label{E100c}
R_{i \bar j}+f_{i \bar j}=\frac{1}{2}g_{i \bar j}, \quad f_{ij}=f_{\bar i \bar j}=0.
\end{align}
It is well known from \eqref{E100c} that $J \na f $ is a Killing and real holomorphic vector field on $M$.

For any Ricci shrinker $(M^n,g,f)$, the scalar curvature $R\ge 0$ \cite[Corollary $2.5$]{CBL07} and $R>0$ unless $(M^n,g)$ is isometric to the Gaussian soliton $(\R^n,g_E)$, by the strong maximum principle.

With the normalization \eqref{E101}, the entropy is defined as
\begin{align} 
\boldsymbol{\mu}=\boldsymbol{\mu}(g)\coloneqq \log \int\frac{e^{-f}}{(4\pi)^{n/2}}\, dV. \label{eqn:mufunctional}
\end{align}

Note that $e^{\boldsymbol{\mu}}$ is uniformly comparable to the volume of the unit ball $B(p,1)$, see \cite[Lemma $2.5$]{LLW21}. 
It was proved in \cite{CN09} (see also \cite[Proposition $3$]{LW20}) that $\boldsymbol{\mu}$ agrees with Perelman's celebrated entropy functional $\boldsymbol{\mu}(g,1)$; see~\cite{Pe1},~\cite{BW17}. 
In fact, $\boldsymbol{\mu}$ is the optimal log-Sobolev constant for all scales; see \cite[Theorem $1$]{LW20}. For the detailed discussions of $\boldsymbol{\mu}$-functional on Ricci shrinkers and its properties, we refer the readers to \cite[Section $5$]{LW20}.

For the potential function $f$, we have the following fundamental estimates.

\begin{lem}[\cite{CZ10} \cite{HM11}]
\label{L201}
Let $(M^n,g,f)$ be a Ricci shrinker. Then there exists a point $p \in M$ where $f$ attains its infimum, and $f$ satisfies the quadratic growth estimate
\begin{align*}
\frac{1}{4}\left(d(x,p)-5n \right)^2_+ \le f(x) \le \frac{1}{4} \left(d(x,p)+\sqrt{2n} \right)^2
\end{align*}
for all $x\in M$, where $a_+ :=\max\{0,a\}$.
\end{lem}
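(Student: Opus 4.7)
The plan rests on two identities derived from the shrinker equation \eqref{E100} and its normalization \eqref{E101}. Tracing \eqref{E100} gives $R+\Delta f = n/2$; combined with $R\ge 0$ (Chen's maximum principle, already cited in the paper) and \eqref{E101}, this yields $f = R + |\nabla f|^2 \ge 0$ together with $|\nabla f|^2 \le f$, so $|\nabla \sqrt{f}|\le 1/2$ wherever $f>0$. In particular $\sqrt{f}$ is $\tfrac12$-Lipschitz on $M$. This Lipschitz bound will handle the upper bound; the lower bound and the existence of a minimizer will both come from a second-variation-of-arc-length argument applied along minimizing geodesics.

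First I would establish a rough quadratic lower bound for $f$, which implies $f$ is proper and hence attains its infimum. Along a minimizing unit-speed geodesic $\gamma:[0,r]\to M$ from a fixed reference point $x_0$ to $x$, the shrinker equation restricted to $\gamma'$ reads $Rc(\gamma',\gamma') + (f\circ \gamma)'' = 1/2$, so
\[
\int_0^r Rc(\gamma',\gamma')\,ds \;=\; \frac{r}{2} + \langle \nabla f(x_0),\gamma'(0)\rangle - \langle \nabla f(x),\gamma'(r)\rangle.
\]
On the other hand, applying the index form to variations $\phi(s)e_i$, where $e_1,\dots,e_{n-1}$ are parallel unit vector fields along $\gamma$ orthogonal to $\gamma'$ and $\phi$ is the piecewise-linear cutoff equal to $s$ on $[0,1]$, to $1$ on $[1,r-1]$, and to $r-s$ on $[r-1,r]$, gives $\int_0^r \phi^2 Rc(\gamma',\gamma')\,ds \le 2(n-1)$. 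The discrepancy between $\int_0^r Rc\,ds$ and $\int_0^r \phi^2 Rc\,ds$ is supported on the two unit intervals at the endpoints and can be controlled by rewriting $Rc(\gamma',\gamma') = 1/2 - (f\circ\gamma)''$, integrating by parts, and applying the Lipschitz bound on $\sqrt{f}$; all endpoint contributions are then dominated by $\sqrt{f(x_0)}+\sqrt{f(x)}$ plus dimensional constants. Combining yields $r \le C(n)\bigl(1 + \sqrt{f(x_0)} + \sqrt{f(x)}\bigr)$, and squaring gives a quadratic growth $f(x) \ge c\, d(x_0,x)^2 - C$ with $c,C$ depending on $n$ and $x_0$. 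Consequently $f$ is proper, minimizing sequences subconverge, and $f$ attains its infimum at some point $p$.

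At $p$ one has $\nabla f(p)=0$ and $\text{Hess}\,f(p)\ge 0$, so the shrinker equation forces $Rc(p)\le g/2$, hence $R(p)\le n/2$, and by \eqref{E101} $f(p)=R(p)\le n/2$. Integrating $|\nabla \sqrt{f}|\le 1/2$ along a minimizing geodesic from $p$ to $x$ yields $\sqrt{f(x)} \le \sqrt{n/2}+d(x,p)/2$, which squares to the claimed upper bound $\tfrac14(d(x,p)+\sqrt{2n})^2$. For the sharp lower bound I would rerun the second-variation argument above with $x_0=p$: the fact that $\nabla f(p)=0$ eliminates one endpoint term, and the contributions near $p$ are controlled by $f(p)\le n/2$, so with care the argument sharpens to $\sqrt{f(x)} \ge (d(x,p)-5n)/2$ whenever $d(x,p) \ge 5n$. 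Squaring (and using the $(\,\cdot\,)_+$ convention for small $d(x,p)$, where the bound is trivial since $f\ge 0$) gives the stated lower bound.

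The main obstacle is the careful bookkeeping of the endpoint contributions in the second-variation step: extracting precisely the constant $5n$, rather than merely showing quadratic growth, requires the precise estimates of the integration-by-parts terms at both endpoints and efficient use of the explicit Lipschitz bound on $\sqrt{f}$. The existence of a minimizer, though conceptually delicate, is straightforward once the rough quadratic lower bound is in hand.
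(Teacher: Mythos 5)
Your proposal is correct and follows essentially the same route as the cited sources (Cao--Zhou \cite{CZ10}); the paper states this lemma without its own proof. The two pillars you use---(a) $R\ge 0$ plus $R+|\nabla f|^2 = f$ to get $|\nabla\sqrt f|\le\tfrac12$, and (b) the second-variation/index-form argument along minimizing geodesics with a piecewise-linear cutoff to extract the lower bound---are exactly the ingredients of Cao--Zhou's Theorem 1.1 and its proof. One small technical point you gloss over: $\sqrt{f}$ is only $C^1$ where $f>0$, so the ``$\tfrac12$-Lipschitz on $M$'' claim needs the remark that if $f$ vanishes anywhere, then $R\equiv 0$ by the strong maximum principle and $(M,g,f)$ is the Gaussian soliton, for which the estimate is immediate; otherwise $f>0$ everywhere and your argument applies verbatim.
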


We will always denote a minimum point of $f$ by $p$, which is regarded as the Ricci shrinker's base point. Moreover, Lemma \ref{L201} implies that $f$ is almost $d^2(p,\cdot)/4$. For simplicity, we define $\Sigma(s,t):=\{x\in M \mid s \le f(x) \le t\}$ and $\Sigma(t):=\Sigma(t,t)$.

Recall that any Ricci shrinker $(M^n,g,f)$ can be considered as a self-similar solution to the Ricci flow. Let ${\psi^t}: M \to M$ be a family of diffeomorphisms generated by $\dfrac{1}{1-t}\nabla f$ and $\psi^{0}=\text{id}$.
In other words, we have
\begin{align} 
\frac{\partial}{\partial t} {\psi^t}(x)=\frac{1}{1-t}\nabla f\left({\psi^t}(x)\right). \label{E201a}
\end{align}
It is well known that the rescaled pull-back metric $g(t)\coloneqq (1-t) (\psi^t)^*g$ satisfies the Ricci flow equation for any $-\infty <t<1$
\begin{align} 
\partial_t g=-2 \Rc_{g(t)} \quad \text{and} \quad g(0)=g. \label{E201ax}
\end{align}
Moreover,  for any $-\infty <t<1$, the pull-back function $F(t):=(1-t)(\psi^t)^*f$ satisfies the following identities
\begin{align} 
&(1-t)\Rc_{g(t)}+\text{Hess}_{g(t)}\,F(t)=\frac{1}{2}g(t), \label{E201aa} \\
&\partial_t F=-(1-t)R_{g(t)}, \label{E201ab}\\
&(1-t)^2 R_{g(t)}+|\na_{g(t)} F|^2_{g(t)}=F. \label{E201ac}
\end{align}
We refer the readers to \cite[Section 2]{LW20} for the proof of the above identities. 

\begin{defn}
	For any Ricci shrinker, the Ricci flow defined in \eqref{E201ax} is called the associated Ricci flow. Any time-shifting and rescaling of the associated Ricci flow is called the Ricci flow induced by the Ricci shrinker.
\label{dfn:RA19_1}	
\end{defn}

\begin{exmp}[Standard model] \label{ex:cyl}
The standard product $(\CP^1 \times \C,g_c:=g_{\CP^1}\times g_E,J_c, f_c)$ is a K\"ahler Ricci shrinker with scalar curvature to be identically $1$ and the potential function $f_c=|z|^2/4+1$, where $z$ is the holomorphic coordinate of the $\C$-factor. $ J_c$ denotes the unique complex structure compatible with $g_c$. The associated Ricci flow is $(\CP^1 \times \C,g_c(t))_{t \in (-\infty,1)}$, where
\begin{align*} 
g_c(t)= (1-t)g_{\CP^1} \times g_E, \quad g_c(0)=g_c.
\end{align*}
The scalar curvature evolves under the Ricci flow by
\begin{align} \label{eq:scalar}
\partial_t R= R^2.
\end{align}
\end{exmp}

Next, we have the following definition, which measures how close a parabolic neighborhood of a point in a K\"ahler Ricci flow is to another.

\begin{defn}[$\ep$-close to a pointed K\"ahler Ricci flow] \label{def:close1}
Let $\lc M_i,g_i(t),J_i,(x_i,t_i)\rc,\,i=1,2$ be two pointed K\"ahler Ricci flow solutions. We say $\lc M_1,g_1(t),J_1,(x_1,t_1) \rc$ is $\ep$-close to $\lc M_2,g_2(t),J_2,(x_2,t_2)\rc$
 if there exist open neighborhoods $U_i$ such that $B_{g_1(t_1)} \lc x_1,(\ep^{-1}-\ep)r \rc \subset U_1 \subset \bar U_1 \subset B_{g_1(t_1)}(x_1,\ep^{-1} r)$, where $R_{g_1}(x_1,t_1)=r^{-2}$, and $B_{g_2(t_2)}(x_2,\ep^{-1}-\ep) \subset U_2 \subset \bar U_2 \subset B_{g_2(t_2)}(x_2,\ep^{-1})$. Moreover, there exists a diffeomorphism $\varphi:U_2 \to U_1$ such that $\varphi(x_2)=x_1$ and
\begin{align*} 
\sup_{ U_1 \times [t_2-\ep^{-1},t_2]} \lc |\varphi^*J_1-J_2|+|\varphi^* \tilde g_1-g_2|^2+\sum_{i=1}^{[\ep^{-1}]} |\na^i_{g_2} (\varphi^*\tilde g_1)|^2\rc \le \ep^2.
\end{align*} 
Here, $\tilde g_1(t)=r^{-2}g_1 \lc r^2(t-t_2)+t_1 \rc$ is the rescaled K\"ahler Ricci flow of $g_1(t)$. For any K\"ahler Ricci shrinker $(M^n,g,J,f)$, we say $(M,g,J,x)$ is $\ep$-close to a pointed K\"ahler Ricci flow if $(M,g(t),J,(x,0))$ is, for the associated K\"ahler Ricci flow $g(t)$. 
\end{defn}

The concept of $\ep$-close to a pointed Ricci flow is similarly defined. We will omit the base point $(x_2,t_2)$, if $(M_2,g_2(t))$ is homogeneous, e.g., standard $\CP^1 \times \C$ and $S^2 \times \R$.

\section{Canonical neighborhood theorem—part A}


Following \cite{LWs1}, we have the following definition. 

\begin{defn}
Let $\mathcal M(A)$ be the family of Ricci shrinkers $(M^n,g,f)$ satisfying $\boldsymbol{\mu}(g) \ge -A$.
\label{dfn:201}
\end{defn}

Notice that the Definition \ref{dfn:201} is equivalent to the volume non-collapsing condition on $B(p,1)$, see \cite[Lemma $2.5$]{LLW21}. Under the lower bound of the entropy, one has the following useful no-local-collapsing inequality.

\begin{thm} \emph{(\cite[Theorem 22]{LW20})}\label{T201a}
For any Ricci shrinker $(M^n,g,f) \in \mathcal M(A)$, there exists a constant $\kappa=\kappa(A,n)>0$ such that for any $B(q,r) \subset M$ with $R \le r^{-2}$, we have
\begin{align*}
|B(q,r)| \ge \kappa r^n.
\end{align*}
\end{thm}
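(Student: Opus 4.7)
The plan is to adapt Perelman's original no-local-collapsing argument to the Ricci shrinker setting. The advantage in our situation is that, as stated in the preliminaries via \cite{LW20}, $\boldsymbol{\mu}(g)$ is the \emph{optimal} log-Sobolev constant at every scale, so for any $\tau>0$ one has $\boldsymbol{\mu}(g,\tau)\geq \boldsymbol{\mu}(g)\geq -A$. Concretely, for every smooth $f$ with $\int (4\pi\tau)^{-n/2}e^{-f}\,dV=1$,
$$
\mathcal W(g,f,\tau):=\int\bigl[\tau(R+|\nabla f|^2)+f-n\bigr](4\pi\tau)^{-n/2}e^{-f}\,dV\geq -A.
$$
I will run this inequality at the scale $\tau=r^2$ dictated by the given ball.

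Given $B(q,r)$ with $R\leq r^{-2}$, I would choose a cutoff $\phi\geq 0$ with $\phi\equiv 1$ on $B(q,r/2)$, supported in $B(q,r)$, and satisfying $|\nabla\phi|\leq 4/r$. Define the trial function $f$ by $(4\pi\tau)^{-n/2}e^{-f}=c\,\phi^2$ with $c$ chosen so that the normalization integral equals one; equivalently $c^{-1}=\int\phi^2\,dV$. Plugging into $\mathcal W(g,f,\tau)$, the scalar curvature piece satisfies $\tau R\leq 1$ on the support of $\phi$; the gradient piece gives
$$
\int \tau|\nabla f|^2\,c\phi^2\,dV=4\tau c\int|\nabla\phi|^2\,dV\leq C(n),
$$
using $|\nabla\phi|^2\leq 16/r^2$, $\tau=r^2$, and $c\int\phi^2\,dV=1$; and the entropy piece contributes $-\log c-2\int(\log\phi)\,c\phi^2\,dV+(\text{constant})$. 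The dominant term is $-\log c=\log\bigl((4\pi\tau)^{-n/2}\int\phi^2\,dV\bigr)\leq \log\bigl(|B(q,r)|/r^n\bigr)+C'(n)$. Summing,
$$
\mathcal W(g,f,\tau)\leq \log\!\left(\frac{|B(q,r)|}{r^n}\right)+C(n).
$$

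Combining with the lower bound $\mathcal W(g,f,\tau)\geq -A$ yields $|B(q,r)|\geq \kappa r^n$ with $\kappa=e^{-A-C(n)}$, as desired. The main obstacle is the analytically delicate log term $-2\int(\log\phi)\phi^2\,dV$ near $\partial B(q,r)$, where $\phi\to 0$ and $\log\phi\to-\infty$: one must choose $\phi$ so that $\phi^2|\log\phi|$ integrates against the volume measure in a scale-invariant way and only contributes a bounded, dimension-dependent constant. A second point that requires genuine input beyond Perelman's local computation is the uniform log-Sobolev inequality at \emph{all} scales $\tau$, which is specific to shrinkers and not available on general Ricci flow backgrounds; here we simply quote it from \cite{LW20}. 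Everything else is routine Cauchy--Schwarz, normalization bookkeeping, and the choice $\tau=r^2$ adapted to the ball.
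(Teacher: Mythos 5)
The paper does not reprove this statement; it is quoted verbatim from \cite[Theorem 22]{LW20}. So the comparison is between your proposal and the standard entropy argument that that reference (following Perelman) must be running. Your overall strategy is the right one, and your use of the fact that $\boldsymbol{\mu}(g)$ is the optimal log-Sobolev constant at every scale $\tau$ (so that one may take $\tau=r^2$) is exactly the shrinker-specific input that makes this go through cleanly.

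However, there is a genuine gap in the estimate of the gradient term, and you identify the wrong term as the ``main obstacle.'' Writing out your bound carefully:
\begin{align*}
4\tau c\int|\nabla\phi|^2\,dV \;\le\; 4\,r^2\cdot\frac{16}{r^2}\cdot c\,\bigl|B(q,r)\setminus B(q,r/2)\bigr|
\;\le\; \frac{64\,|B(q,r)|}{\int\phi^2\,dV},
\end{align*}
and similarly the log term $-2\int(\log\phi)\,c\phi^2\,dV$ is bounded by $\tfrac{2}{e}\,|B(q,r)|/\!\int\phi^2\,dV$. Neither quantity is $\le C(n)$ unless you also control the ratio $|B(q,r)|/\!\int\phi^2\,dV$. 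Since $\phi\equiv 1$ on $B(q,r/2)$, this reduces to a local volume-doubling statement $|B(q,r/2)|\gtrsim |B(q,r)|$, which you do not have a priori: with only $R\le r^{-2}$ there is no Ricci lower bound at scale $r$, hence no Bishop--Gromov comparison, and the mass of $B(q,r)$ could in principle be concentrated in the outer annulus. This is precisely the obstruction that forces the extra structure in Perelman's sharpened noncollapsing (scalar curvature only).

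The standard remedy, which must be supplied, is a dichotomy and iteration: either $|B(q,r/2)|\ge 3^{-n}|B(q,r)|$, in which case the ratio is bounded and your computation closes, yielding $\log\bigl(|B(q,r)|/r^n\bigr)\ge -A-C(n)$; or $|B(q,r/2)|<3^{-n}|B(q,r)|$, in which case the ball $B(q,r/2)$ is even more collapsed relative to $(r/2)^n$, $R\le (r/2)^{-2}$ still holds on it, and one repeats the argument at scale $r/2$. If the iteration terminates, one gets the desired lower bound on the worst volume ratio, and monotonicity of the sequence of ratios transfers this back to scale $r$; if it never terminates, the volume ratios $|B(q,2^{-k}r)|/(2^{-k}r)^n$ tend to $0$, contradicting smoothness of $g$ at $q$ (where the ratio must tend to $\omega_n$). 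Without this dichotomy, your ``routine Cauchy--Schwarz and normalization bookkeeping'' does not actually produce a dimensional constant, and the claimed inequality $\mathcal W(g,f,\tau)\le\log\bigl(|B(q,r)|/r^n\bigr)+C(n)$ is not established.
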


Another important property of the moduli space $\mathcal M(A)$ is weak-compactness. More precisely, we have

\begin{thm}[\cite{HM11} \cite{HM15}]
Let $(M_i^4, g_i,f_i) \in \mathcal M(A)$ be a sequence of Ricci shrinkers. Then by taking subsequence if necessary, we have
\begin{align*}
(M^4_i, g_i, f_i,p_i) \longright{pointed-\hat{C}^{\infty}-Cheeger-Gromov} (X_{\infty}, g_{\infty}, f_{\infty},p_{\infty}), 
\end{align*}
where $p_i$ is a minimum point of $f_i$ and $(X_{\infty}, g_{\infty},f_{\infty}, p_{\infty})$ is a smooth Ricci shrinker orbifold with isolated singularities.
\end{thm}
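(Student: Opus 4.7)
The plan is to establish a four-dimensional orbifold Cheeger-Gromov compactness theorem, and then verify that the shrinker structure passes to the limit. I would carry out the following steps in order.

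First, I would collect uniform geometric controls implied by membership in $\mathcal{M}(A)$. Lemma \ref{L201} gives the two-sided quadratic growth $\tfrac{1}{4}(d(x,p_i)-20)^2_+ \le f_i(x) \le \tfrac14(d(x,p_i)+\sqrt{8})^2$, together with $|\na f_i|^2 \le f_i$ from \eqref{E101}. This yields uniform control on how balls $B(p_i,r)$ sit inside sublevel sets $\Sigma_i(0,s)$. Combined with Theorem \ref{T201a}, the lower bound $\boldsymbol{\mu}(g_i) \ge -A$ gives uniform non-collapsing $|B(q,r)|\ge \kappa r^4$ whenever $R\le r^{-2}$, and a dual (Bishop-Gromov style for the weighted measure $e^{-f_i}dV$) argument gives a uniform upper bound $|B(p_i,r)|\le C(A,r)$ on volumes of balls around the minimum point.

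Second, and this is the technical core, I would establish for each fixed $r>0$ a uniform $L^2$-curvature estimate
\begin{align*}
\int_{B(p_i,r)} |\tRm_{g_i}|^2\, dV_{g_i} \le C(A,r).
\end{align*}
For this, I would integrate the shrinker identities: tracing \eqref{E100} and using \eqref{E101} yields $R+\Delta f = 2$, and a standard weighted Bochner computation gives $\Delta_{f_i} R = R - 2|Rc|^2$ as well as pointwise bounds $\int R\, e^{-f_i}dV \le C$ and $\int |Rc|^2 e^{-f_i}dV \le C$. In dimension four the remaining Weyl part is controlled via the Gauss-Bonnet-Chern integrand $|W|^2 = -|Rc|^2 + \tfrac13 R^2 + 8\pi^2\chi$-type identity combined with the soliton equation, yielding a uniform bound on $\int_{B(p_i,r)} |\tRm|^2$ once we localize against the weight $e^{-f_i}$, whose growth is controlled by step one. (This is precisely where dimension $n=4$ is essential.)

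Third, with uniform volume non-collapsing, uniform volume upper bounds, and $L^2$-curvature bounds on every fixed ball, I would invoke the four-dimensional $\varepsilon$-regularity and orbifold convergence package (Anderson, Bando-Kasue-Nakajima, Tian): there is a subsequence converging in the pointed Gromov-Hausdorff sense to a complete length space $X_\infty$ that is a smooth Riemannian manifold away from a locally finite singular set $\mathcal{S}$, convergence is smooth on compact subsets of $X_\infty\setminus\mathcal{S}$, and each singular point has a neighborhood modeled on $\R^4/\Gamma$ for some finite subgroup $\Gamma\subset O(4)$ acting freely on $S^3$. The model is obtained by a bubbling rescaling; the bubble is a complete Ricci-flat ALE 4-manifold (from the shrinker equation under rescaling), and by Bishop-Gromov plus non-collapsing it has one end with tangent cone $\R^4/\Gamma$, hence isolated singularities on $X_\infty$.

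Finally, I would pass the shrinker structure to the limit. On $X_\infty\setminus\mathcal{S}$ the smooth convergence together with the uniform estimates on $f_i$ (quadratic growth, gradient bound, uniform Hessian bound from $\he f_i = \tfrac12 g_i - Rc_{g_i}$ combined with local $C^k$ control away from $\mathcal{S}$) lets us extract a smooth limit potential $f_\infty$ satisfying $Rc_{g_\infty}+\he f_\infty = \tfrac12 g_\infty$ and $R_{g_\infty}+|\na f_\infty|^2 = f_\infty$ there. At each orbifold point, lifting to the local $\R^4/\Gamma$-chart, the lifted metric and potential satisfy the shrinker equation in the punctured ball; since the equation is elliptic in a suitable gauge (e.g.\ harmonic coordinates where the equation becomes a quasilinear elliptic system for $g$ and $f$) and the metric extends $C^0$ by the orbifold structure, standard removable-singularity / elliptic regularity arguments promote the extension to smoothness across the origin, producing the desired smooth Ricci shrinker orbifold structure on $(X_\infty, g_\infty, f_\infty)$.

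The main obstacle is the second step: the uniform $L^2$-curvature bound. Everything after it is a relatively standard application of the orbifold convergence machinery plus elliptic regularity for the soliton equation, but without a scale-invariant Riemann-curvature integral one cannot localize the $\varepsilon$-regularity, and the delicate point is exploiting the weight $e^{-f_i}$ together with the quadratic growth of $f_i$ to convert a weighted integral bound into an honest $L^2$-bound on each fixed ball.
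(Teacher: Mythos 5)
The paper does not give its own proof of Theorem~\ref{T202}; it is quoted as a citation to Haslhofer--M\"uller \cite{HM11}\cite{HM15}. The closest thing the paper actually proves is Theorem~\ref{T203}, the moving-basepoint analogue, and there the structure is indeed the one you sketch: uniform geometric controls from $\boldsymbol{\mu}(g_i)\ge -A$, a uniform local $L^2$-Riemann bound, the Anderson/Bando--Kasue--Nakajima orbifold-convergence machinery, and passing the soliton equations to the orbifold limit. So your scaffolding is right, and the last two steps are handled adequately.

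The genuine gap is in Step~2, the local $L^2$-curvature bound. You invoke a ``Gauss--Bonnet--Chern integrand $|W|^2=-|Rc|^2+\tfrac13 R^2+8\pi^2\chi$-type identity.'' As a pointwise statement this is false, and as an integral identity the Chern--Gauss--Bonnet formula on a complete noncompact four-manifold involves $\chi(M_i)$, which is not a priori controlled by the entropy lower bound; to localize it to $B(p_i,r)$ you would additionally need to control $\chi(B(p_i,r))$ and the transgression (boundary) integrals, which involve the second fundamental form of $\partial B(p_i,r)$ and are not automatically bounded. Neither of these issues is addressed. The paper's proof of Theorem~\ref{T203} deliberately avoids this route: it passes to the conformal metric $\bar g_i=e^{-\tilde f_i}\tilde g_i$, which by \cite[Lemma~3.5]{LLW21} has \emph{pointwise} bounded Ricci curvature on the relevant balls; combined with noncollapsing (Theorem~\ref{T201a}) this lets one invoke the Cheeger--Naber codimension-$4$ theorem \cite[Theorem~1.13]{CN15} to get $\int_{B}|Rm_{\bar g_i}|^2\le C$, and the Kulkarni--Nomizu conformal curvature identity \eqref{E204xf} then transfers this to $\int_B|Rm_{\tilde g_i}|^2\le C$ using the already-established $L^2$-Ricci bound. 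I would replace your GBC sketch by this conformal-plus-Cheeger--Naber argument; alternatively, if you wish to follow \cite{HM15} more closely you must make the Euler-characteristic control and boundary-term estimates explicit, and that is the entire content of that note, not a routine step.
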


Intuitively speaking, ``pointed-$\hat{C}^{\infty}$-Cheeger-Gromov" means that the convergence is smooth away from the orbifold singularities. For the precise meaning, see the discussion after \cite[Theorem $2.6$]{LWs1}. Notice that the limit space carries a Ricci shrinker structure 
\begin{align*} 
\Rc_{g_{\infty}}+\text{Hess}_{g_{\infty}} \,f_{\infty}=\frac{1}{2}g_{\infty},
\end{align*}
by lifting to the orbifold local chart if necessary. We remark that the higher dimensional generalization of the weak-compactness theory was established in~\cite{LLW21} and~\cite{HLW}. 

Now, we prove the following convergence result in $\mathcal M(A)$ if the base points are moving to infinity.

\begin{thm}
\label{T203}
Let $(M_i^4, g_i,f_i) \in \mathcal M(A)$ be a sequence of Ricci shrinkers. Then by taking subsequence if necessary, we have
\begin{align}
(M_i, \tilde g_i, \tilde f_i, x_i) \longright{pointed-\hat{C}^{\infty}-Cheeger-Gromov} (Y_{\infty}, g_{\infty}, f_{\infty}, x_{\infty}), \label{E202b} 
\end{align}
where $s_i:=f_i(x_i) \to \infty$, $\tilde g_i=s_i g_i$, $\tilde f_i=f_i-s_i$ and $(Y_{\infty}, g_{\infty},f_{\infty}, x_{\infty})$ is a smooth Ricci steady soliton orbifold with isolated singularities. In addition, $R_{\infty}>0$ unless $(Y_{\infty},g_{\infty})=(\R^4,g_E)$.

Morever, if $(M_i, g_i,f_i)$ are K\"ahler Ricci shrinkers, then $(Y_{\infty}, g_{\infty}, f_{\infty})$ is a K\"ahler Ricci steady soliton orbifold.
\end{thm}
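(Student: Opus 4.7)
The plan is to run the weak-compactness strategy of Theorem \ref{T202} at the scale $s_i^{-1/2}$ around the moving basepoint $x_i$. The starting observation is that the rescaled metric $\tilde{g}_i=s_i g_i$ and shifted potential $\tilde{f}_i=f_i-s_i$ satisfy
\begin{equation*}
\text{Rc}_{\tilde{g}_i}+\text{Hess}_{\tilde{g}_i}\,\tilde{f}_i=\frac{1}{2s_i}\tilde{g}_i,\qquad R_{\tilde{g}_i}+|\nabla_{\tilde{g}_i}\tilde{f}_i|^2_{\tilde{g}_i}=1+\frac{\tilde{f}_i}{s_i},
\end{equation*}
so as $s_i\to\infty$ the shrinker equation degenerates to the steady soliton equation $\text{Rc}+\text{Hess}\,f=0$ and the Hamilton-type identity becomes $R+|\nabla f|^2=1$. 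In particular, at $x_i$ one has $\tilde{f}_i(x_i)=0$, $R_{\tilde{g}_i}(x_i)\le 1$, and $|\nabla_{\tilde{g}_i}\tilde{f}_i|_{\tilde{g}_i}(x_i)\le 1$.

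First I would collect uniform $C^0$ control at the rescaled scale. Integrating the gradient estimate along $\tilde{g}_i$-geodesics, combined with Lemma \ref{L201}, gives $|\tilde{f}_i|+R_{\tilde{g}_i}+|\nabla_{\tilde{g}_i}\tilde{f}_i|_{\tilde{g}_i}\le C(r)$ on every fixed $\tilde{g}_i$-ball $B_{\tilde{g}_i}(x_i,r)$. Since $R_{g_i}\le f_i\le s_i+C(r)$ on such a ball, the original scalar curvature is at most of order $s_i$ there, so Theorem \ref{T201a} applied with radius of order $s_i^{-1/2}$ yields the rescaled noncollapsing
\begin{equation*}
|B_{\tilde{g}_i}(x_i,r)|_{\tilde{g}_i}\ge \kappa(A)\,r^{4}.
\end{equation*}
Combining noncollapsing with the almost-steady soliton equation and a localized Chern--Gauss--Bonnet / integrated Bochner identity, in the spirit of \cite{HM15} and \cite{LLW21}, produces a uniform $L^2$-bound on $\text{Rm}_{\tilde{g}_i}$ over every bounded $\tilde{g}_i$-ball.

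With these three inputs, the proof follows the template of Theorem \ref{T202}. Anderson-type $\varepsilon$-regularity in real dimension $4$ extracts pointed Gromov--Hausdorff subsequential limits $(Y_\infty,g_\infty,x_\infty)$ that are smooth away from a finite set of orbifold singularities, with $\hat{C}^\infty$-convergence on compact subsets of the regular set. Standard elliptic bootstrapping on the soliton equation (as in \cite{HM11}\cite{HM15}) gives uniform $C^k$-bounds on $\tilde{f}_i$ on the regular part, allowing passage to the limit to produce a smooth steady soliton orbifold $(Y_\infty,g_\infty,f_\infty)$ with isolated singularities. The Kähler case is inherited because $J_i$ is parallel and therefore has uniform $C^k$-bounds away from singularities and extends across the isolated orbifold points. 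Finally, the identity $\Delta R_\infty-\langle\nabla R_\infty,\nabla f_\infty\rangle=-2|\text{Rc}_\infty|^2$ and the strong maximum principle as in \cite[Corollary 2.5]{CBL07} force $R_\infty>0$ unless $(Y_\infty,g_\infty)$ is Ricci-flat; in that case the steady equation gives $\text{Hess}\,f_\infty=0$, and a splitting argument combined with the noncollapsing above identifies the limit with $(\R^4,g_E)$.

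The main obstacle is the $L^2$-curvature estimate on bounded $\tilde{g}_i$-balls around the moving basepoint. In Theorem \ref{T202}, the analogous $L^2$ bound can be traced back to a global topological input (e.g., Chern--Gauss--Bonnet on closed bounded-scale balls around $p_i$) together with integrated shrinker identities; here we must instead work purely locally at a scale $s_i^{-1/2}\to 0$ in $g_i$, and the moving center $x_i\to\infty$ precludes a direct global integration. This forces us to localize the Bochner-type arguments to the ball $B_{g_i}(x_i,r s_i^{-1/2})$, in the refined style of \cite{LLW21}, and to carefully absorb the small error term $\tfrac{1}{2s_i}\tilde{g}_i$ appearing in the rescaled soliton equation.
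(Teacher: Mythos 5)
You have the right skeleton — rescale by $s_i$, note the shrinker identities degenerate to steady soliton equations, get noncollapsing from Theorem \ref{T201a}, and run $\ep$-regularity plus elliptic bootstrap to an orbifold limit — but the execution of the one step you correctly flag as ``the main obstacle'' is precisely where your plan diverges from what actually works, and I don't think your route closes.

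You propose to get the $L^2$ bound on $Rm_{\tilde g_i}$ via a ``localized Chern--Gauss--Bonnet / integrated Bochner identity.'' An integrated Bochner/weighted-divergence argument (using $\operatorname{div}_{\tilde g_i}(Rc_{\tilde g_i}\,e^{-\tilde f_i})=0$ against a cutoff) does indeed give the $L^2$ bound on \emph{Ricci}, and this is how the paper starts. But passing from $L^2(Rc)$ to $L^2(Rm)$ in dimension $4$ is the hard part, and the Gauss--Bonnet--Chern route that works in \cite{HM11}/\cite{HM15} relies on topological control of the domain. Once the basepoint $x_i\to\infty$ and the domain $B_{\tilde g_i}(x_i,L)$ is a ball of radius $\sim L s_i^{-1/2}\to 0$ sitting far from $p_i$ with no uniform control of its topology or boundary, that argument no longer applies — and your phrase ``localize the Bochner-type arguments'' does not supply a replacement. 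What the paper actually does is introduce the conformal metric $\bar g_i := e^{-\tilde f_i}\tilde g_i$, use \cite[Lemma 3.5]{LLW21} to show $|Rc_{\bar g_i}|$ is \emph{uniformly bounded} on $B_{\tilde g_i}(x_i,2L)$, apply Cheeger--Naber's codimension-4 theorem \cite[Theorem 1.13]{CN15} to obtain the $L^2$ bound on $Rm_{\bar g_i}$, and then transfer back to $Rm_{\tilde g_i}$ via the conformal curvature formula, using the already-established $L^2(Rc_{\tilde g_i})$ bound and the $C^0$ bound on $|\na_{\tilde g_i}\tilde f_i|$ to absorb the error terms. This conformal-metric trick is not cosmetic: it is also how the paper gets the upper volume bound (Bishop--Gromov for $\bar g_i$) and the uniform Sobolev inequality (Croke's isoperimetric estimate for $\bar g_i$), both of which feed into the $\ep$-regularity step, and which your proposal passes over in silence. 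So the gap is concrete: without the conformal change and the Cheeger--Naber input, the $L^2$ curvature bound and the Sobolev inequality on moving balls are not obtained, and the rest of your bootstrap has no foundation. The remaining pieces of your outline (the LLW21/Anderson-BKN orbifold conclusion, the strong-maximum-principle dichotomy for $R_\infty$, and the smooth propagation of $J_i$) do match the paper.
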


First, we prove the following proposition.

\begin{prop} 
For any $L>1$, there exists a constant $C=C(A,L)>1$ such that the following statements hold if $i$ is sufficiently large.
\begin{enumerate}[label=\textnormal{(\roman{*})}]
\item For any $y \in B_{\tilde g_i}(x_i,2L)$,
\begin{align}
|\tilde f_i(y)| \le 3L \quad \text{and} \quad R_{\tilde g_i}(y) \le 2.
\label{E203xa}
\end{align}

\item \emph{(Volume estimates)} For any $B_{\tilde g_i}(y,r) \subset B_{\tilde g_i}(x_i,2L)$ with $r \le 1$, we have
\begin{align} \label{E203xb}
C^{-1}r^4 \le |B_{\tilde g_i}(y,r)| \le Cr^4.
\end{align}

\item \emph{(Sobolev inequality)} For any Lipschitz function $u$ compactly supported in $B_{\tilde g_i}(x_i,L)$,
\begin{align} \label{E203xd}
\lc \int u^4 \,dV_{\tilde g_i} \rc^{\frac 1 2} \le C \int |\na_{\tilde g_i} u|^2 \, dV_{\tilde g_i}.
\end{align}

\item \emph{(Energy control)} The $L^2$-integral of the curvature satisfies
\begin{align} \label{E203xc}
\int_{B_{\tilde g_i}(x_i,L)} |\Rm_{\tilde g_i}|^2 \,dV_{\tilde g_i} \le C.
\end{align}
\end{enumerate}
For simplicity, all norms and volume are taken concerning $\tilde g_i$.
\end{prop}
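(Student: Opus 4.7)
The plan is to derive each estimate from a standard Ricci shrinker property, tracking carefully how things behave under the scaling $\tilde g_i = s_i g_i$ with $s_i = f_i(x_i) \to \infty$. For (i), the normalization \eqref{E101} gives $|\nabla \sqrt{f_i}|_{g_i} \le 1/2$, so $\sqrt{f_i}$ is $\tfrac12$-Lipschitz on $(M_i, g_i)$. Since $d_{g_i}(x_i, y) = s_i^{-1/2} d_{\tilde g_i}(x_i, y)$, any $y \in B_{\tilde g_i}(x_i, 2L)$ satisfies $|\sqrt{f_i(y)} - \sqrt{s_i}| \le L/\sqrt{s_i}$; squaring gives $|\tilde f_i(y)| = |f_i(y)-s_i| \le 2L + L^2/s_i \le 3L$ once $s_i \ge L$, and then $R_{\tilde g_i}(y) = s_i^{-1} R_{g_i}(y) \le s_i^{-1} f_i(y) \le 1 + 3L/s_i \le 2$.

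For (ii), the lower bound is Theorem \ref{T201a} applied to the rescaled shrinker (the $\kappa$-noncollapsing constant is scale invariant): by (i), $R_{\tilde g_i} \le 2$ on $B_{\tilde g_i}(x_i, 2L)$, so for $r \le 1/\sqrt{2}$ we have $R_{\tilde g_i} \le r^{-2}$ and hence $|B_{\tilde g_i}(y,r)| \ge \kappa r^4$; the range $r \in (1/\sqrt{2}, 1]$ is handled by monotonicity. For the upper bound, the shrinker equation rewrites as $Rc_{\tilde g_i} + \text{Hess}_{\tilde g_i} \tilde f_i = \tilde g_i/(2 s_i) \ge 0$, so the Bakry--\'Emery Ricci of $(\tilde g_i, \tilde f_i)$ is nonnegative; the Wei--Wylie weighted volume comparison combined with $|\tilde f_i|\le 3L$ from (i) yields $|B_{\tilde g_i}(y,r)| \le C(L)\, r^4$. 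For (iii), the bound $\boldsymbol{\mu}(g_i) \ge -A$ is equivalent to a uniform log-Sobolev inequality at every scale (cf.~\cite{LW20}); since the Euclidean-type Sobolev inequality is scale invariant in real dimension four, the same constant works for $\tilde g_i$, and restricting to test functions supported in $B_{\tilde g_i}(x_i, L)$ (where $e^{-\tilde f_i}$ is two-sided bounded by (i)) yields the stated inequality.

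The hard part is (iv). First, to bound $\int |Rc_{\tilde g_i}|^2$ on the ball I would integrate the shrinker identity $\Delta_{\tilde f_i} R_{\tilde g_i} = s_i^{-1} R_{\tilde g_i} - 2|Rc_{\tilde g_i}|^2$ against $\eta^2 e^{-\tilde f_i}$ for a cutoff $\eta$ supported in $B_{\tilde g_i}(x_i, 2L)$; the cross term $\la \nabla R_{\tilde g_i}, \nabla \eta \ra$ is absorbed using the shrinker identity $\nabla_j R = 2 R_{jk} \nabla^k f$ together with the pointwise control on $R_{\tilde g_i}$ and on $|\nabla \tilde f_i|^2_{\tilde g_i} = s_i^{-1}(f_i - R_{g_i}) \le 2$ from (i). To upgrade to $|Rm|^2$, decompose $|Rm|^2 = |W|^2 + \tfrac12|E|^2 + \tfrac{1}{24} R^2$ with $E = Rc - \tfrac{R}{4}g$ the traceless Ricci, and bound $\int |W|^2$ via a local Chern--Gauss--Bonnet formula applied to a smooth domain between $B_{\tilde g_i}(x_i, L)$ and $B_{\tilde g_i}(x_i, 2L)$; the boundary terms are controlled by second fundamental form estimates, themselves controlled by the bounds already established. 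The subtlety distinguishing this proposition from its unrescaled analog is that the global weighted estimate $\int_{M_i} |Rm_{g_i}|^2 e^{-f_i} dV < \infty$ from \cite{HM11} does \emph{not} localize to the tiny ball $B_{g_i}(x_i, L/\sqrt{s_i})$ in the original metric, since removing $e^{-f_i} \sim e^{-s_i}$ introduces a factor $e^{+s_i}$ that blows up; instead all estimates must be performed intrinsically in $\tilde g_i$, using the scale invariance of $|Rm|^2\, dV$ in real dimension four.
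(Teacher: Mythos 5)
Your treatment of (i) matches the paper exactly, and your Bakry--\'Emery route to the upper volume bound in (ii) (nonnegative $\mathrm{Ric}_{\tilde f_i}$ plus $|\tilde f_i|\le 3L$ plus Wei--Wylie) is a correct alternative to what the paper does. However, you never introduce the conformal metric $\bar g_i := e^{-\tilde f_i}\tilde g_i$, which is the single technical device the paper uses for (ii), (iii), \emph{and} (iv): by \cite[Lemma~3.5]{LLW21}, $\bar g_i$ has \emph{uniformly bounded Ricci curvature} on $B_{\tilde g_i}(x_i,2L)$, and since $\bar g_i$ is uniformly comparable to $\tilde g_i$ there, the Bishop--Gromov comparison, Croke's isoperimetric/Sobolev inequality, and (crucially) the Cheeger--Naber codimension-$4$ theorem~\cite[Theorem~1.13]{CN15} all become available.

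For (iii) your sketch has a gap. The log-Sobolev inequality encoded in $\boldsymbol{\mu}(g_i)\ge -A$ converts into a Sobolev inequality of the scale-invariant form $\bigl(\int u^4\bigr)^{1/2}\le C_A\int\bigl(|\nabla u|^2+\tfrac{R}{4}u^2\bigr)$, so what you need is to \emph{absorb} the $\int R u^2$ term, not merely to observe $e^{-\tilde f_i}$ is two-sided bounded. Using $R_{\tilde g_i}\le 2$ and H\"older on a ball of volume $\approx L^4$ gives $\int R u^2 \lesssim L^2 (\int u^4)^{1/2}$, and for $L$ large this coefficient is not small enough to absorb. One needs either a Poincar\'e inequality (obtainable, but not stated by you) or, as the paper does, to bypass the issue entirely: bounded $Rc_{\bar g_i}$ plus the noncollapsing of (ii) gives a uniform isoperimetric constant via Croke, hence a Sobolev inequality with a clean constant.

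For (iv) your first half (integrating the shrinker identity against $\eta^2 e^{-\tilde f_i}$ to bound $\int|Rc_{\tilde g_i}|^2$) matches the paper, modulo your phrasing it via the Laplacian of $R$ rather than via $\langle \tfrac{\tilde g_i}{2s_i}-\mathrm{Hess}\,\tilde f_i, Rc\rangle$ and $\mathrm{div}(Rc\,e^{-\tilde f_i})=0$. But your second half is where the real gap lies. You propose a local Chern--Gauss--Bonnet argument and assert the boundary terms (second fundamental form of the level sets) are ``controlled by the bounds already established.'' They are not: those boundary terms involve $\mathrm{Hess}\, d$ (or $\mathrm{Hess}\, \tilde f_i$), which via the soliton identity is $Rc$ up to a small multiple of $\tilde g_i$, and we have no pointwise bound on $Rc_{\tilde g_i}$ at this stage --- only the $L^2$ bound just derived. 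The Haslhofer--M\"uller version of the Gauss--Bonnet argument handles this carefully with a coarea-type averaging and is not a one-line substitution. The paper sidesteps it entirely: it writes $Rm_{\bar g_i}$ via the Kulkarni--Nomizu conformal-change formula, invokes \cite[Theorem~1.13]{CN15} (Ricci bound + noncollapsing $\Rightarrow$ local $L^2$ bound on $Rm$ for $\bar g_i$), and then transfers back to $\tilde g_i$ using the already-established bounds on $|\nabla \tilde f_i|$ and $\int|Rc_{\tilde g_i}|^2$. Your closing observation that one must shift to $\tilde f_i = f_i - s_i$ because $e^{-f_i}\sim e^{-s_i}$ is not uniform along the sequence is exactly right and is implicitly the reason the whole proof is built around $\tilde f_i$ and $\bar g_i$.
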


\begin{proof}
In the proof, all constants $C_i$ depend on $A$ and $L$.

From \eqref{E101}, we have 
\begin{align}
R_{\tilde g_i}+|\na_{\tilde g_i} f_i|^2_{\tilde g_i}=s_i^{-1}(R_{g_i}+|\na_{g_i} f_i|^2_{g_i})=s_i^{-1}f_i. \label{E204xa}
\end{align}
From \eqref{E204xa} and the fact that $R_{\tilde g_i} \ge 0$, it is clear that $\sqrt{f_i}$ is $\frac{1}{2}s_i^{-\frac1 2}$-Lipschitz with respect to $\tilde g_i$. Therefore, for any $y \in B_{\tilde g_i}(x_i,2L)$,
\begin{align}\label{E204xb}
\sqrt{s_i}-\frac{L}{\sqrt{s_i}}\le \sqrt{f_i(y)} \le \sqrt{s_i}+\frac{L}{\sqrt{s_i}}.
\end{align}
Since $s_i \to \infty$, \eqref{E203xa} follows immediately from \eqref{E204xb} and \eqref{E204xa}.

Next, we consider the conformal metric $\bar g_i:=e^{-\tilde f_i} \tilde g_i$. It follows from \cite[Lemma 3.5]{LLW21} that 
\begin{align}
|\Rc_{\bar g_i}|_{\bar g_i} \le C_1 \quad \text{on} \quad B_{\tilde g_i}(x_i,2L). \label{E204xc}
\end{align}
Since $\bar g_i$ is uniformly comparable to $\tilde g_i$ by \eqref{E203xa}, the upper bound of \eqref{E203xb} follows immediately from the corresponding estimate for $\bar g_i$, by the Bishop-Gromov volume comparison theorem. Moreover, the lower bound of \eqref{E203xb} can be derived from \eqref{E203xa} and Theorem \ref{T201a}.

From \eqref{E204xc} and the isoperimetric constant estimate~\cite{Cro80}, it is well-known that $L^2$-Sobolev inequality for $\bar g_i$ holds uniformly on $B_{\tilde g_i}(x_i,L)$. Therefore, \eqref{E203xd} is immediate since $\bar g_i$ and $\tilde g_i$ are uniformly comparable.

To prove (iv), we first derive the $L^2$ estimate for $\Rc$. From \eqref{E100},
\begin{align}
\text{Hess}_{\tilde g_i} \tilde f_i+\Rc_{\tilde g_i}=\text{Hess}_{g_i} f_i+\Rc_{g_i}=\frac{g_i}{2}=\frac{\tilde g_i}{2s_i}.
\label{E204xd}
\end{align}

We fix a cutoff function $\psi$ on $\R$ such that $\psi(t)=1$ for $t \le 1$ and $\psi(t)=0$ for $t \ge 2$. By defining $\eta(x)=\psi(\frac{d_{\tilde g_i}(x_i,x)}{L})$, we compute
\begin{align*} 
\int \eta^2|\Rc_{\tilde g_i}|^2e^{-\tilde f_i} \,dV_{\tilde g_i}=&\int \eta^2\la \frac{\tilde g_i}{2s_i}-\text{Hess}\,\tilde f_i,\Rc_{\tilde g_i} \ra e^{-\tilde f_i} \,dV_{\tilde g_i} \\
=& \int \lc \frac{1}{2s_i}\eta^2 R_{\tilde g_i}+2\eta \Rc_{\tilde g_i}(\na_{\tilde g_i} \eta,\na_{\tilde g_i} \tilde f_i) \rc e^{-\tilde f_i} \,dV_{\tilde g_i} \\
\le & \int \lc \frac{1}{2s_i}\eta^2 R_{\tilde g_i}+\frac{1}{2}\eta^2|\Rc_{\tilde g_i}|^2+2|\na_{\tilde g_i} \eta|^2|\na_{\tilde g_i} \tilde f_i|^2 \rc e^{-\tilde f_i} \,dV_{\tilde g_i},
\end{align*}
where the norm and inner product are taken with respect to $\tilde g_i$ and we have used $\text{div}_{\tilde g_i}(\Rc_{\tilde g_i}\,e^{-\tilde f_i})=0$ for the second line from \eqref{E204xd}.
Consequently, by \eqref{E203xa} and \eqref{E203xb}, we have
\begin{align*} 
\int \eta^2|\Rc_{\tilde g_i}|^2e^{-\tilde f_i} \,dV_{\tilde g_i} \le \int \lc s_i^{-1}\eta^2 R_{\tilde g_i}+4|\na_{\tilde g_i} \eta|^2|\na_{\tilde g_i} \tilde f_i|^2\rc e^{-\tilde f_i} \,dV_{\tilde g_i} 
\le C_2.
\end{align*}
Therefore, one obtains
\begin{align}
\int_{B_{\tilde g_i}(x_i,L)}|\Rc_{\tilde g_i}|^2 \,dV_{\tilde g_i} \le C_3.
\label{E204xe}
\end{align}

For the conformal metric $\bar g_i$, we have
\begin{align}
\Rm_{\bar g_i}=e^{-\tilde f_i}\left[\Rm_{\tilde g_i}+\frac{1}{2}\left(\frac{d\tilde f_i \otimes d\tilde f_i}{2}+\frac{\tilde g_i}{2}\lc1-\frac{|\nabla_{\tilde g_i} \tilde f_i|^2}{2}\rc-\frac{\Rc}{2}\right)\KN \tilde g_i \right],\label{E204xf}
\end{align}
where the proof and the definition of the Kulkarni-Nomizu product $\KN$ can be found in \cite[Theorem 1.165]{Besse}. By using a covering argument, it follows from \eqref{E204xc}, \eqref{E203xb} and \cite[Theorem 1.13]{CN15} that
\begin{align}\label{E204xg}
\int_{B_{\tilde g_i}(x_i,L)} |\Rm_{\bar g_i}|^2 \,dV_{\tilde g_i} \le C_4.
\end{align}

From \eqref{E204xf}, we have on $B_{\tilde g_i}(x_i,L)$,
\begin{align}\label{E204xh}
|\Rm_{\tilde g_i}|^2 \le C_5\lc |\Rm_{\bar g_i}|^2+|\na_{\tilde g_i} \tilde f_i|^4+|\Rc_{\tilde g_i}|^2+1 \rc \le C_6 \lc|\Rm_{\bar g_i}|^2+|\Rc_{\tilde g_i}|^2+1 \rc,
\end{align}
since $|\na_{\tilde g_i} \tilde f_i|$ are uniformly bounded by \eqref{E204xa}.
Combining \eqref{E204xe}, \eqref{E204xg} and \eqref{E204xh}, we obtain
\begin{align*}
\quad \int_{B_{\tilde g_i}(x_i,L)} |\Rm_{\tilde g_i}|^2\,dV_{\tilde g_i} \le C_7.
\end{align*}

In sum, the proof is complete.
\end{proof}

\emph{Proof of Theorem \ref{T203}}:
Recall from \eqref{E204xd} and \eqref{E204xa}, we have 
\begin{align}
&\text{Hess}_{\tilde g_i} \tilde f_i+\Rc_{\tilde g_i}=\frac{\tilde g_i}{2s_i},
\label{E203a} \\
&R_{\tilde g_i}+|\na_{\tilde g_i} \tilde f_i|^2_{\tilde g_i}=1+\frac{\tilde f_i}{s_i}. \label{E203b}
\end{align}

It follows from \eqref{E203xa} and \eqref{E203xb} that
\begin{align}
|B_{\tilde g_i}(x_i,1)|_{\mu_i} \ge v_0
\label{E203d}
\end{align}
for some $v_0=v_0(A)>0$, where $\mu_i:=e^{-\tilde f_i}dV_{\tilde g_i}$ is the weighted measure. Combining \eqref{E203a}-\eqref{E203d}, it follows from the main result \cite[Theorem $1.1$]{LLW21} and its variant \cite[Theorem $10.2$]{LLW21} that by taking a subsequence if necessary,
\begin{align}
(M_i, \tilde g_i, \tilde f_i, x_i) \longright{pointed-\hat{C}^{\infty}-Cheeger-Gromov} \left(Y_{\infty}, g_{\infty}, f_{\infty}, x_{\infty} \right). \label{E204}
\end{align}
where $(Y_{\infty}, d_{\infty})$ is a length space, $f_{\infty}$ is a Lipschitz function on $(Y_{\infty}, d_{\infty})$. The space $Y_{\infty}$ has a natural regular-singular decomposition $Y_{\infty}=\mathcal{R} \cup \mathcal{S}$ satisfying
\begin{itemize}
\item[(a).] The singular part $\mathcal{S}$ is a closed set of Minkowski dimension $0$. 

\item[(b).] The regular part $\mathcal{R}$ is an open manifold with smooth metric $g_{\infty}$ satisfying Ricci steady soliton equation
\begin{align}
\Rc_{g_{\infty}} + \text{Hess}_{g_{\infty}} f_{\infty}=0 \quad \text{and} \quad R_{g_{\infty}}+|\na_{g_{\infty}} f_{\infty}|^2=1. \label{E204a}
\end{align}
\end{itemize}

Note that \eqref{E204} implies that away from $\mathcal{S}$, the convergence is smooth. This follows from the bootstrapping argument based on \eqref{E203a} and \eqref{E203b}, see \cite[Section 7]{LLW21}. Moreover, \eqref{E204a} follows from \eqref{E203a} and \eqref{E203b}, by taking the limit.

With the Sobolev inequality \eqref{E203xd} and the energy control \eqref{E203xc}, one can prove similarly as in \cite{HM11}, following the well-known methods of \cite{BKN89}\cite{An89}, that $Y_{\infty}$ is a smooth orbifold with isolated singularities. We sketch the proof as follows.

From the equation \eqref{E203b}, we have the following elliptic equation (see \cite[Lemma 2.1]{PW10})
\begin{align}
\Delta_{\tilde f_i} \Rm_{\tilde g_i}=s_i^{-1} \Rm_{\tilde g_i}+\Rm_{\tilde g_i}*\Rm_{\tilde g_i},\label{E205a}
\end{align}
where $\Delta_{\tilde f_i}=\Delta_{\tilde g_i}-\la \na_{\tilde g_i} \tilde f_i, \na_{\tilde g_i} \cdot \ra$. Since $s_i \to \infty$, one can derive from \eqref{E203xd}, \eqref{E203xc} and \eqref{E205a} the $\ep$-regularity as \cite[Lemma 3.3]{HM11}. That is, there exists $\ep=\ep(A,L)>0$ and $C_l=C_l(A,L)>0$ such that
\begin{align*}
\sup_{B_{\tilde g_i}(y,r)}|\na_{\tilde g_i}^l \Rm_{\tilde g_i}| \le \frac{C_l}{r^{l+2}} \|\Rm_{\tilde g_i}\|_{L^2(B_{\tilde g_i}(y,2r))}
\end{align*}
for any $l \ge 0$, provided that $B_{\tilde g_i}(y,2r) \subset B_{\tilde g_i}(x_i,L)$ and $\|\Rm_{\tilde g_i}\|_{L^2(B_{\tilde g_i}(y,2r))} \le \ep$.

Based on the $\ep$-regularity, one immediately concludes that $\mathcal S$ is isolated. Moreover, any $x \in \mathcal S$ has a unique tangent cone $(\R^4/\Gamma,g_E)$, where $\Gamma \subset O(4)$ is a finite subgroup acting freely on $S^3$. Now, one can follow the proof of \cite[Theorem 1.1]{HM11} to first show that there exists a $C^0$-orbifold chart around $x$. Taking the limit of \eqref{E205a}, one obtains
\begin{align}
\Delta_{f_{\infty}} \Rm_{ g_{\infty}}=\Rm_{g_{\infty}}*\Rm_{g_{\infty}} \label{E205aa}
\end{align}
on the regular part $\RR$. Based on the local $L^2$ bound of $\Rm_{g_{\infty}}$ around $x$, one can apply the Moser iteration to \eqref{E205aa} and follow the strategy of \cite[Theorem 5.1]{BKN89} to show $\Rm_{g_{\infty}}$ is uniformly bounded near $x$. Therefore, one can construct a $C^{\infty}$ orbifold chart around $x$ as done in \cite[Theorem 5.1]{BKN89}.

In particular, \eqref{E204a} hold globally on $Y_{\infty}$, by lifting to the orbifold chart if necessary. Therefore, $(Y_{\infty},g_{\infty},f_{\infty})$ is a smooth steady soliton orbifold with isolated singularities.

Notice that $R_{g_{\infty}} \ge 0$ on $Y_{\infty}$ and satisfies (\cite[Lemma 2.1]{PW10})
\begin{align*}
\Delta_{f_{\infty}} R_{g_{\infty}}=-2|\Rc_{g_{\infty}}|^2.
\end{align*}
If $R_{g_{\infty}}$ vanishes at one point, we conclude that $R_{g_{\infty}} \equiv 0$ by the strong maximum principle and hence $\Rc_{g_{\infty}} \equiv 0$. From \eqref{E204a}, we have
\begin{align} \label{E205b}
\text{Hess}_{g_{\infty}} f_{\infty}=0 \quad \text{and} \quad |\na_{g_{\infty}} f_{\infty}|^2=1.
\end{align}
Clearly, \eqref{E205b} implies $(Y_{\infty},g_{\infty})=(Y' \times \R,g' \times g_E)$ and contains no singularity. Moreover, $(Y',g')$ is flat and hence isometric to $(\R^3,g_E)$ by passing Theorem \ref{T201a} to the limit $Y_{\infty}$, since the convergence \eqref{E204} is smooth in this case.

For the last statement, we assume all $(M_i,g_i,J_i,f_i)$ are K\"ahler Ricci shrinkers with complex structure $J_i$. Since the convergence \eqref{E202b} is smooth away from the orbifold points, $J_i$ converges smoothly to a complex structure $J_{\infty}$ on the regular part $\mathcal R$ of $Y_{\infty}$. 
Clearly, $g_{\infty}$ is a K\"ahler metric compatible with $J_{\infty}$, since $\na_{g_{\infty}} J_{\infty}=0$. By lifting to the orbifold chart around the singularity, $J_{\infty}$ is defined globally and hence $(Y_{\infty},g_{\infty},J_{\infty},f_{\infty})$ is a K\"ahler steady soliton orbifold.

In sum, the proof of the theorem is complete.
\qed

\begin{rem} \label{rem:conv}
For general dimension $n$, if $(M_i^n, g_i,f_i) \in \mathcal M(A)$ be a sequence of Ricci shrinkers, then by taking subsequence if necessary, we have
\begin{align*}
(M_i, \tilde g_i, \tilde f_i, x_i) \longright{pointed-\hat{C}^{\infty}-Cheeger-Gromov} (Y_{\infty}, g_{\infty}, f_{\infty}, x_{\infty}), 
\end{align*}
where $s_i:=f_i(x_i) \to \infty$, $\tilde g_i=s_i g_i$, $\tilde f_i=f_i-s_i$. The limit $(Y_{\infty}, g_{\infty},f_{\infty})$ is a 
steady soliton conifold, in the sense of \emph{\cite[Definition A.1]{LW22}}. For more information, see \emph{\cite[Appendix A]{LW22}}.
\end{rem}

Next, we have the following definition, similar to Definition \ref{def:close1}, which measures how close the neighborhood of a point in a K\"ahler Ricci shrinker is to a K\"ahler Ricci steady soliton conifold.

\begin{defn}[$\ep$-close to a pointed steady soliton orbifold]\label{def:close2}
Let $(M^4,g,f)$ be a Ricci shrinker and let $\bar x \in M$ with $R(\bar x)=r^{-2}$ . We say that $(M,g,x)$ is $\ep$-close to a pointed steady soliton orbifold $(Y,g',y)$ if, after rescaling the metric $g$ by the factor $r^{-2}$, the neighborhood $B_{g}(\bar{x},\ep^{-1}r) $ is $\ep$-close in the pointed-Gromov-Hausdorff sense to $B_{g'}(y,\ep^{-1}) $. 
\end{defn}

Now, one can easily prove (A) of Theorem \ref{T101}.

\emph{Proof of Theorem \ref{T101} (A)}: For fixed $A,\ep$ and $\delta$, if no such $L$ exists, one can obtain a sequence of K\"ahler Ricci shrinker surfaces $(M^2_i, g_i,J_i,f_i)$ and $x_i \in M_i$ such that $s_i:=f_i(x_i) \to \infty$, $R_{g_i}(x_i) \ge \delta s_i$, but $(M_i,g_i,x_i)$ is not $\ep$-close to a K\"ahler steady soliton orbifold.

By taking a subsequence if necessary, it follows from Theorem \ref{T203} that 
\begin{align*}
(M_i, \tilde g_i, J_i,\tilde f_i, x_i) \longright{pointed-\hat{C}^{\infty}-Cheeger-Gromov} (Y_{\infty}, g_{\infty}, J_{\infty},f_{\infty}, x_{\infty}), 
\end{align*}
where $(Y_{\infty}, g_{\infty}, J_{\infty},f_{\infty})$ is a K\"ahler steady soliton orbifold. If we set $r_i^{-2}=R_{g_i}(x_i)$, then $s_i \ge r_i^{-2} \ge \delta s_i$ by our assumption. Therefore, by further taking a subsequence, we may assume $s^{-1}_ir_i^{-2} \to \tau \in [\delta, 1]$ and hence 
\begin{align*}
(M_i, r_i^{-2}g_i, x_i) \longright{pointed-\hat{C}^{\infty}-Cheeger-Gromov} (Y_{\infty}, \tau g_{\infty}, x_{\infty}).
\end{align*}
Notice that $(Y_{\infty},g_{\infty})$ cannot be flat since otherwise, the convergences above are smooth, but the scalar curvature of $r_i^{-2}g_i$ at $x_i$ is identically equal to $1$.

In sum, we obtain a contradiction, and the original statement for $\ep$-closeness holds. The last estimate for the scalar curvature follows from the next result without using any K\"ahler condition.
\qed

The proof of the following proposition is essentially contained in \cite[Proposition 4.6]{LW22}. We include the adapted proof for readers' convenience.

\begin{prop}
\label{prop:lower}
For any positive constants $A$, $\delta$ and $\ep$, there exists a positive constant $L=L(\ep,\delta,A)$ satisfying the following property.

Let $(M^4,g,f)$ be a Ricci shrinker with $\boldsymbol{\mu}(g) \ge -A$. If $R(x) \ge \delta f(x)$ and $f(x) \ge L$, then $R(y) \ge L^{-1} f(x)$ for any $y \in B(x,\ep^{-1} R^{-\frac 1 2} (x))$.
\end{prop}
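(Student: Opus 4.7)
The plan is a contradiction argument driven by the compactness result of Theorem~\ref{T203}; note that the K\"ahler condition plays no role. Suppose the statement fails: then there exist $A,\delta,\ep>0$, a sequence of Ricci shrinkers $(M_i^4,g_i,f_i)\in\mathcal M(A)$, and points $x_i,y_i\in M_i$ with $s_i:=f_i(x_i)\to\infty$, $R_{g_i}(x_i)\ge\delta s_i$, $y_i\in B_{g_i}(x_i,\ep^{-1}R_{g_i}^{-1/2}(x_i))$, and $R_{g_i}(y_i)<i^{-1}s_i$.

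Rescale by $s_i$, setting $\tilde g_i:=s_ig_i$ and $\tilde f_i:=f_i-s_i$. Then $R_{\tilde g_i}(x_i)=s_i^{-1}R_{g_i}(x_i)\ge\delta$, so $d_{\tilde g_i}(x_i,y_i)\le\ep^{-1}R_{\tilde g_i}^{-1/2}(x_i)\le\ep^{-1}\delta^{-1/2}$ is uniformly bounded. Applying Theorem~\ref{T203}, we pass to a subsequence so that
$$(M_i,\tilde g_i,\tilde f_i,x_i)\longright{pointed-\hat{C}^{\infty}-Cheeger-Gromov}(Y_{\infty},g_{\infty},f_{\infty},x_{\infty}),$$
where the limit is a steady soliton orbifold with isolated singularities. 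By a further subsequence $y_i\to y_\infty\in Y_\infty$ within distance $\ep^{-1}\delta^{-1/2}$ of $x_\infty$. Passing $R_{\tilde g_i}(x_i)\ge\delta$ to the limit gives $R_{g_\infty}(x_\infty)\ge\delta>0$, so by the last part of Theorem~\ref{T203}, $(Y_\infty,g_\infty)$ is \emph{not} isometric to $(\R^4,g_E)$.

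The crux is to conclude $R_{g_\infty}(y_\infty)=0$. If $y_\infty$ lies in the regular part of $Y_\infty$, this is immediate from smooth Cheeger-Gromov convergence together with $R_{\tilde g_i}(y_i)=s_i^{-1}R_{g_i}(y_i)<i^{-1}\to 0$. If $y_\infty$ is one of the isolated orbifold singularities, one lifts to the orbifold chart $\Gamma\backslash B^4(r)\to B_{g_\infty}(y_\infty,r)$ produced in the proof of Theorem~\ref{T203}: the rescaled metrics $\tilde g_i$ pull back to smooth $\Gamma$-invariant metrics converging smoothly to the lift of $g_\infty$, the points $y_i$ pull back to $\tilde y_i\to 0$, and scalar curvature is invariant under this local covering, so again $R_{g_\infty}(y_\infty)=0$. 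Once this is established, the strong maximum principle applied to $\Delta_{f_\infty}R_{g_\infty}=-2|Rc_{g_\infty}|^2\le 0$ in a smooth orbifold chart about $y_\infty$, together with connectedness of $Y_\infty$, forces $R_{g_\infty}\equiv 0$. By the last assertion of Theorem~\ref{T203}, this implies $(Y_\infty,g_\infty)\cong(\R^4,g_E)$, contradicting $R_{g_\infty}(x_\infty)\ge\delta>0$.

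The main technical subtlety is the singular-point case: it requires the orbifold-chart description produced inside the proof of Theorem~\ref{T203}, so that smooth convergence in lifted charts is available around the singularities and the scalar curvature at $y_i$ really does control $R_{g_\infty}(y_\infty)$ in spite of the possible curvature blow-up there. Beyond this, every step is a direct application of results already established in Section~3.
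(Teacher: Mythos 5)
Your overall strategy of rescaling by $s_i=f_i(x_i)$, passing to the steady soliton orbifold limit of Theorem~\ref{T203}, and invoking the strong maximum principle together with the last assertion of that theorem is the right skeleton, and it matches the set-up the paper uses. The regular-point branch of your case analysis is fine. However, the singular-point branch --- which you correctly single out as the crux --- contains a genuine gap that the rest of the argument cannot absorb.

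Your claim is that one can lift to the $C^\infty$ orbifold chart $B^4(r)/\Gamma$ near $y_\infty\in\mathcal{S}$ and that the metrics $\tilde g_i$ pull back to $\Gamma$-invariant metrics converging smoothly to the lift of $g_\infty$. This is not how the convergence in Theorem~\ref{T203} works. The $M_i$ are smooth manifolds, so the orbifold singularities of $Y_\infty$ arise from curvature concentration (bubbling of ALE spaces), and near $\mathcal{S}$ the convergence $\tilde g_i\to g_\infty$ is only Gromov-Hausdorff: there is no identification of a neighborhood of $y_i$ in $(M_i,\tilde g_i)$ with a $\Gamma$-invariant lift on $B^4(r)$ along which the metrics converge in $C^\infty$. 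The $C^\infty$ orbifold chart produced in the proof of Theorem~\ref{T203} is a statement about the \emph{limit} metric $g_\infty$ alone (via Moser iteration on the limit steady soliton equation), not about the approximating sequence. In particular, the points $y_i$ may sit deep inside the bubble region where $|Rm_{\tilde g_i}|\to\infty$ on annuli around them, and the value $R_{\tilde g_i}(y_i)\to 0$ at those single points carries no information about $R_{g_\infty}(y_\infty)$, which is determined by the smooth convergence on the regular neck region, not by what happens at $y_i$.

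This is precisely the obstruction the paper's proof is engineered to avoid. Instead of tracking the single bad point $y_i$, the paper runs a parabolic Moser iteration (Claim~1) for the subsolution $(2i^{-2}-R_{\tilde g_i})_+$ in a spacetime slab, converting the smallness of $R$ at the single point $(y_i,0)$ into a \emph{positive-measure} spacetime set $P_i(2i^{-2})$ on which $R_{\tilde g_i}<2i^{-2}$ (Claim~2). Projecting a good time-slice forward by $\psi_i$ yields a set $\Omega_i\subset M_i$ of uniformly positive volume, located in $|\tilde f_i|\le 10L_1$, on which $R_{\tilde g_i}\to 0$. Because $\mathcal{S}$ has Minkowski dimension~$0$ (hence measure zero), a positive-measure set must meet the region where the convergence is smooth, producing a genuine regular point $z\in\mathcal{R}$ with $R_{g_\infty}(z)=0$; then the maximum principle, flatness, smoothness of the convergence, and $R_{\tilde g_i}(x_i)\ge\delta$ give the contradiction. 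The measure-theoretic escape from $\mathcal{S}$ is the essential idea your proposal is missing; without it, the singular-point case is unresolved.
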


\begin{proof}
Suppose the conclusion fails for fixed $A,\delta$ and $\ep$. Then there exists a sequence of Ricci shrinkers $(M^4_i, g_i,f_i ,x_i) \in \MM(A)$ with $s_i:=f_i(x_i) \to \infty$ and $R_{g_i}(x_i) \ge \delta s_i$ such that 
\begin{align}
R_{g_i}(y_i) < i^{-2} s_i \label{E207a}
\end{align}
for some $y_i \in B_{g_i}(x_i,\ep^{-1} r_i)$, where $r_i^{-2}=R_{g_i}(x_i)$. From \eqref{E201ac} and our assumption,
\begin{align}
\delta s_i \le r_i^{-2} \le s_i. \label{E207aa}
\end{align}

Let $g_i(t)$ be the Ricci flow associated with the Ricci shrinker $(M_i, g_i, f_i)$. For $t \le 0$, we define 
\begin{align*}
\tilde g_i(t) \coloneqq s_i g_i(s_i^{-1}t), \quad \tilde F_i(t) \coloneqq F_i(s_i^{-1}t)-s_i, \quad \tilde \square_i \coloneqq \partial_t-\Delta_{\tilde g_i(t)}, 
\end{align*}
where $F_i(t)=(1-t)f_i(t)$. Direct calculation from \eqref{E201aa}-\eqref{E201ac} shows that 
\begin{align}
|\na_{\tilde g_i(t)} \tilde F_i(t)|^2_{\tilde g_i(t)}+(1-s_i^{-1}t)^2R_{\tilde g_i(t)}=1+ \frac{\tilde F_i(t)}{s_i} \label{E207b}
\end{align}
and
\begin{align}
\tilde \square_i \tilde F_i(t)=-\frac{2}{s_i}. \label{E207c}
\end{align}

From \eqref{E207aa} and \eqref{E207b}, there exists a constant $L_1=L_1(\ep,\delta)>1$ such that on $B_{g_i}(x_i,\ep^{-1} r_i)$,
\begin{align}
|\tilde F_i(0)| \le L_1. \label{E207cx}
\end{align}

Now we define the following parabolic balls
\begin{align*}
P^1_i& \coloneqq \{(y,t) \,\mid |\tilde F_i(y,t)| \le 2L_1, \, -2L_1 \le t \le 0\}, \\
P^2_i& \coloneqq \{(y,t) \,\mid |\tilde F_i(y,t)| \le 4L_1, \, -4L_1 \le t \le 0\} . 
\end{align*}

In the following, all positive constants $C_i$ depend only on $\delta,A$ and $\ep$, and the corresponding inequalities hold for sufficiently large $i$. It follows from \eqref{E207b} that on $ P^2_i$,
\begin{align}
0 \le R_{\tilde g_i(t)} \le C_1. \label{E207d}
\end{align}

\textbf{Claim 1:} Suppose $u$ is a nonnegative function such that 
\begin{align*}
\tilde \square_i u \le 0, \quad \textrm{on} \; P^2_i. 
\end{align*}
Then we have
\begin{align}\label{E207e}
\max_{P^1_i} u^2 \le C_2 \iint_{P^2_i} u^2 \,dV_{\tilde g_i(t)} dt
\end{align}
for some $C_2>0$.

\emph{Proof of Claim 1:} The proof follows verbatim as \cite[Lemma $9.7$]{LLW21} by using the Moser iteration. Notice that the Sobolev inequality and the control of $R_{\tilde g_i(t)}$ on $P^2_i$ are guaranteed by \cite[Theorem 1]{LW20} and \eqref{E207d} respectively. 
Moreover, the cutoff functions $\eta_k$ in the proof of \cite[Lemma $9.7$]{LLW21} can be defined similarly by using $\tilde F_i$ and $\eta_k$ can be estimated similarly by using \eqref{E207c}. We omit the details.

Next we define for any $\tau>0$,
\begin{align*}
P_i(\tau)=P^2_i\cap \{(z,t) \,\mid R_{\tilde g_i}(z,t) <\tau \}.
\end{align*}

\textbf{Claim 2:} There exists a constant $C_3>0$ such that 
\begin{align} \label{E207f}
|P_i( 2i^{-2})| \ge C_3. 
\end{align}
Here, the volume is with respect to $dV_{\tilde g_i(t)} dt$.

\emph{Proof of Claim 2:} We choose $u=(2i^{-2}-R_{\tilde g_i})_+$, where $(\cdot)_+=\max\{\cdot,0\}$, and apply \eqref{E207e} to conclude that
\begin{align} \label{E207fa}
\max_{P^1_i} (2i^{-2}-R_{\tilde g_i})^2_+ \le C_2 \iint_{P^2_i} (2i^{-2}-R_{\tilde g_i})^2_+ \,dV_{\tilde g_i(t)} dt,
\end{align}
since $\tilde \square_i (2i^{-2}-R_{\tilde g_i})_+ \le \tilde \square_i (2i^{-2}-R_{\tilde g_i}) \le 0$.

It follows from \eqref{E207a} and \eqref{E207cx} that $(y_i,0) \in P^1_i \cap P_i( i^{-2})$. Therefore, it follows from \eqref{E207fa} that

\begin{align} \label{E207fb}
i^{-4} \le C_2 \iint_{P^2_i} (2i^{-2}-R_{\tilde g_i})^2_+ \,dV_{\tilde g_i(t)} dt \le 4C_2i^{-4} |P_i(2i^{-2})|.
\end{align}

Therefore, \eqref{E207f} follows immediately from \eqref{E207fb}.

It is clear from \eqref{E207f} that for any $i$ sufficiently large, there exists $t_i \in [-2L_1,0]$ such that 
\begin{align*}
|\Sigma_i|_{dV_{\tilde g_i(t_i)}} \ge C_4,
\end{align*}
where $\Sigma_i=M_i \times \{t_i\} \cap P_i( 2i^{-2})$ and $C_4=C_3/2L_1$. Now, we define 
\begin{align*}
\Omega_i:=\psi_i^{s_i^{-1}t_i}\lc \text{pr}(\Sigma_i) \rc,
\end{align*}
where $\text{pr}$ is the projection onto $M_i$ and $\psi_i$ is the diffeomorphism defined in \eqref{E201a}. Then we compute
\begin{align} \label{E208a}
|\Omega_i|_{dV_{\tilde g_i(0)}}\ge \frac{1}{2}(1-s_i^{-1}t_i)^{2} |\Omega_i|_{dV_{\tilde g_i(0)}}= \frac{1}{2} |\Sigma_i|_{dV_{\tilde g_i(t_i)}} \ge C_5,
\end{align}
where $C_5=C_4/2$. On the other hand, for any $(z,t_i) \in \Sigma_i$, we have
\begin{align*}
\tilde F_i(z,t_i)=F_i(z,s_i^{-1}t_i)-s_i=(1-s_i^{-1}t_i) f_i(z')-s_i
\end{align*}
where $z'=\psi_i^{s_i^{-1}t_i}(z)$. Therefore, it follows from the definition of $P^2_i$ that
\begin{align} \label{E208b}
|f_i(z')-s_i| \le 10 L_1.
\end{align}

From \eqref{E202b}, by taking a subsequence if necessary, we have
\begin{align*}
(M^4_i, \tilde g_i, \tilde f_i, x_i) \longright{pointed-\hat{C}^{\infty}-Cheeger-Gromov} \left(Y_{\infty}, g_{\infty}, f_{\infty} , x_{\infty}\right), 
\end{align*}
where $\tilde g_i=s_ig_i$, $\tilde f_i:=f_i-s_i$ and $(Y_{\infty}, x_{\infty}, g_{\infty}, f_{\infty})$ is a steady soliton orbifold with isolated singularities. From \eqref{E208a} and \eqref{E208b}, there exists a sequence $z_i \in \Omega_i$ such that $z_i \to z \in \mathcal R$, otherwise it would violate the energy control \eqref{E203xc}. By the definition of $\Omega_i$, we have
\begin{align*}
R_{g_{\infty}}(z)=0.
\end{align*}

In this case, $(Y_{\infty},g_{\infty})$ must be isometric to $(\R^4,g_E)$ and the convergence is smooth. However, this contradicts $R_{g_i}(x_i) \ge \delta s_i$.

In sum, the proof is complete.
\end{proof}

\section{Canonical neighborhood theorem—part B}

Let $(M^n,g(t))_{t \in I}$ be the Ricci flow induced by a Ricci shrinker (cf. Definition~\ref{dfn:RA19_1}) in $\MM(A)$ for $I=(-\infty,0]$. It is proved in \cite[Theorem 7]{LW20} that there exists a positive heat kernel function $H(x,t,y,s)$ for $x,y \in M$ and $s,t \in I$ with $s<t$. More precisely,
\begin{align*}
\square H(\cdot,\cdot,y,s)=0, \quad \lim_{t \to s^+}H(\cdot,t,y,s)=\delta_y 
\end{align*}
and
\begin{align*}
\square^* H(x,t,\cdot,\cdot)=0, \quad \lim_{s \to t^-}H(x,t,\cdot,s)=\delta_x, 
\end{align*}
where $\square:=\partial_t-\Delta_t$ and $\square^*:=-\partial_s-\Delta_s+R$. For any $(x,t) \in M \times I$, we define the conjugate heat kernel measure $v_{x,t;s}$ by $dv_{x,t;s}(y)=K(x,t,y,s)dV_s(y)$. It follows immediately from \cite[Theorem 7 (58)]{LW20} that $v_{x,t;s}$ is a probability measure on $M$. 
In particular, $v_{x,t;t}=\delta_x$. Next, we recall the following definition of $H$-center, where the conjugate heat kernel measure is concentrated.

\begin{defn} \label{def:Hcenter}
Given a constant $H>0$, a point $(z,t) \in M \times I$ is called an $H$-center of $(x_0,t_0)\in M \times I$ if $t \le t_0$ and
\begin{align*}
\emph{\text{Var}}_t(\delta_z,v_{x_0,t_0;t})\le H(t_0-t),
\end{align*}
where the variance between two probability measures $\mu_1,\mu_2$ on $(M,g(t))$ is defined by
\begin{align*}
\emph{\text{Var}}_t(\mu_1,\mu_2):=\iint d_t^2(x,y) \,d\mu_1(x)d\mu_2(y).
\end{align*}
\end{defn}

It is proved in \cite[Proposition 3.12]{LW23} that for any $(x_0,t_0)\in M \times I$ and $t \le t_0$, there exists $z \in M$ such that $(z,t)$ is an $H_n$-center of $(x_0,t_0)$, where $H_n:=(n-1)\pi^2/2+4$. Moreover, any two $H$-centers $(z_1,t)$ and $(z_2,t)$ of $(x_0,t_0)$ satisfies $d_t(z_1,z_2) \le 2 \sqrt{H(t_0-t)}$. 
The following result ensures that the conjugate heat kernel measure is concentrated around an $H$-center (cf. \cite[Proposition 3.13]{LW23}).

\begin{prop} \label{prop:conc}
If $(z,t)$ is an $H$-center of $(x_0,t_0)$, then for any $L>0$,
\begin{align*}
v_{x_0,t_0;t} \lc B_t(z,\sqrt{LH(t_0-t)}) \rc \ge 1-\frac{1}{L}.
\end{align*}
\end{prop}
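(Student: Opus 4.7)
The plan is to recognize this as a direct application of Chebyshev's (Markov's) inequality to the variance bound encoded in the $H$-center definition. Concretely, the $H$-center hypothesis unpacks, using $\mu_1 = \delta_z$ in the variance formula of Definition~\ref{def:Hcenter}, to the single integral inequality
\begin{align*}
\int_M d_t^2(z,y) \, dv_{x_0,t_0;t}(y) \le H(t_0-t),
\end{align*}
since integrating $d_t^2(x,y)$ against $d\delta_z(x)$ collapses the inner integral to $d_t^2(z,y)$.

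Next I would apply Chebyshev's inequality on the probability measure $v_{x_0,t_0;t}$ (a probability measure by the cited \cite[Theorem 7]{LW20}) to the nonnegative function $y \mapsto d_t^2(z,y)$. For any $r > 0$ this yields
\begin{align*}
v_{x_0,t_0;t}\bigl( \{ y \in M : d_t(z,y) \ge r \} \bigr) \le \frac{1}{r^2} \int_M d_t^2(z,y) \, dv_{x_0,t_0;t}(y) \le \frac{H(t_0-t)}{r^2}.
\end{align*}

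Finally, I would specialize to $r = \sqrt{LH(t_0-t)}$, for which the right-hand side becomes exactly $1/L$. Taking complements inside the probability measure $v_{x_0,t_0;t}$ converts this tail bound into the claimed lower bound on $v_{x_0,t_0;t}(B_t(z,\sqrt{LH(t_0-t)}))$. There is no real obstacle here: the entire content lies in recognizing that the $H$-center condition is precisely a second-moment bound, after which the Chebyshev step is routine. The only minor care needed is to note that $B_t(z,r) = \{y : d_t(z,y) < r\}$ is the open ball, whose complement coincides with the set $\{y : d_t(z,y) \ge r\}$ on which the Chebyshev bound applies.
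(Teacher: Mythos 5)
Your proposal is correct and is precisely the standard argument: the paper itself only cites this result from \cite[Proposition 3.13]{LW23} (following Bamler), and the proof there is exactly the Chebyshev/Markov step you describe, applied to the second-moment bound $\int_M d_t^2(z,y)\,dv_{x_0,t_0;t}(y) \le H(t_0-t)$ that the $H$-center condition encodes once one specializes $\mu_1 = \delta_z$ in the variance formula.
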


Now, we have the following heat kernel upper bound estimate from~\cite[Theorem 1.3]{LW23}.

\begin{thm}\label{thm:heatupper}
For any $\ep>0$, there exists a constant $C = C(A,\ep) >0$ such that
\begin{align}\label{E401}
H(x,t,y,s) \le \frac{C}{(t-s)^{\frac n 2}} \exp \lc -\frac{d_s^2(z,y)}{(4+\ep)(t-s)} \rc,
\end{align}
where $(z,s)$ is an $H_n$-center of $(x,t)\in M \times I$.
\end{thm}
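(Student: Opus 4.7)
The plan is to combine three ingredients: the uniform log-Sobolev inequality available on $\MM(A)$, a Davies-type weighted $L^2$ estimate for the adjoint heat kernel, and the concentration around the $H$-center provided by Proposition \ref{prop:conc}. First I would establish the on-diagonal estimate $H(x,t,y,s) \le C_1(A,n)(t-s)^{-n/2}$. Since $\boldsymbol{\mu}(g) \ge -A$ implies, by \cite[Theorem 1]{LW20}, a uniform logarithmic Sobolev inequality on each time slice of the associated Ricci flow at all scales, the standard ultracontractivity argument (Nash/Davies) converts this log-Sobolev inequality into the on-diagonal bound with a constant depending only on $A$ and $n$.

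Next I would run a Davies weighted-$L^2$ argument. Fix a parameter $\alpha < 1/(4+\ep)$ and, for a Lipschitz function $\phi$ on $M$ with $|\na_{g(s')}\phi| \le 1$, consider, for $s' \in (s,t)$,
\begin{align*}
J(s') := \int_M H(x,t,y,s')^{2}\, \exp\!\lc \frac{2\alpha\, \phi(y)^{2}}{t-s'} \rc dV_{s'}(y).
\end{align*}
Using $\square^* H(x,t,\cdot,\cdot)=0$, the evolution of $dV_{s'}$, and integration by parts, one computes $\frac{d}{ds'}J(s')$; the scalar curvature term appearing in $\square^*$ is nonnegative on a Ricci shrinker and can be discarded, while the cross term $2\alpha \phi |\na\phi|$ is absorbed by Young's inequality with parameter tuned to $\ep$. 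This yields monotonicity of $J(s')$ up to a dimensional factor, and comparing with the behavior as $s' \nearrow t$ (where the on-diagonal bound applies) gives a weighted $L^{2}$ bound on $H(x,t,\cdot,s)$. A parabolic mean-value inequality for the adjoint heat kernel, analogous to Claim 1 in the proof of Proposition \ref{prop:lower}, then upgrades this weighted $L^2$ bound to a pointwise bound of the form $H(x,t,y,s) \le C(t-s)^{-n/2} \exp(-\phi(y)^2/[(4+\ep)(t-s)])$.

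The final step is to select the right weight. I would take $\phi(y) := \max\{d_{s}(z,y) - C_{2}\sqrt{t-s},\, 0\}$, with $z$ the $H_n$-center and $C_2$ fixed large; the cutoff ensures $\phi$ vanishes on the ``core region'' $B_s(z, C_2\sqrt{t-s})$ around which the conjugate heat kernel concentrates (Proposition \ref{prop:conc}), while for $y$ with $d_s(z,y) \gg \sqrt{t-s}$ the function $\phi(y)$ essentially equals $d_s(z,y)$. Points near $z$ fall into the on-diagonal regime and are absorbed into $C$; points far from $z$ receive the claimed Gaussian decay. The \emph{main obstacle} is recovering the near-sharp Gaussian constant $4+\ep$ rather than a significantly worse one: this demands a careful balancing of $\alpha$ against $|\na\phi|^2 \le 1$, together with a clean absorption of the metric-evolution terms $\pt g = -2Rc$ into the log-Sobolev coercivity. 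The uniformity over $\MM(A)$ is inherited entirely from the uniformity of the log-Sobolev constant in the class.
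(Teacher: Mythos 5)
The paper does not prove this theorem itself; it is cited directly from~\cite[Theorem 1.3]{LW23}, so there is no ``paper's own proof'' to compare against line by line. Evaluating your proposal on its own merits, the overall architecture (on-diagonal bound from the uniform log-Sobolev inequality, then a Gaussian off-diagonal bound via a weighted $L^2$ monotonicity) is the natural first thing to try, but it has genuine gaps that prevent it from producing the stated estimate.

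The central problem is the choice of weight. Your $\phi(y)=\max\{d_s(z,y)-C_2\sqrt{t-s},0\}$ is centered at the $H$-center $z$, not at $x$. The Davies argument gives $J(s)\le J(s_0)$ for intermediate $s_0\in(s,t)$, and you then need $J(s_0)=\int H^2(x,t,\cdot,s_0)\,e^{2\alpha\phi^2/(t-s_0)}\,dV_{s_0}$ to be controlled. As $s_0$ approaches $t$ the kernel concentrates at $x$, where $\phi(x)$ is typically \emph{not} small (since $z\ne x$ and $d_s(z,x)$ has no a priori bound), so the weighted integral blows up. At an intermediate time, controlling $J(s_0)$ requires exponential tail decay of $H(x,t,\cdot,s_0)$ away from $z$ -- but that is precisely the Gaussian bound you are trying to prove; Proposition~\ref{prop:conc} only gives polynomial concentration, which is strictly weaker and cannot close the loop. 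In the classical Davies method the weight is anchored at the point where the kernel concentrates as $s'\nearrow t$ (namely $x$), which makes the boundary comparison trivial; here the theorem deliberately replaces $d_s(x,\cdot)$ by $d_s(z,\cdot)$ because time-$t$ and time-$s$ distances are not comparable on a Ricci flow background, and this is exactly what breaks the weighted-$L^2$ bootstrap. This is why the approach actually used (following Bamler, as in~\cite{LW23}) is based on the reproduction formula, pointed Nash entropy, and concentration, not on a Davies exponential weight.

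Two secondary issues: (i) you require $|\na_{g(s')}\phi|\le 1$ for all $s'\in(s,t)$, but $\phi$ is defined using $d_s$; under Ricci flow the Lipschitz constant of a fixed function changes with time, and you give no distance-distortion estimate to control this. (ii) The parabolic mean-value iteration you invoke (Claim~1 in the proof of Proposition~\ref{prop:lower}) is proved there under an a priori scalar curvature bound on the parabolic neighborhood $P^2_i$; no such bound is available in the present generality, so that step cannot simply be quoted. Finally, the sign bookkeeping ``the scalar curvature term can be discarded'' does happen to go the right way (after Young's inequality, the surviving $R$-term has the helpful sign because $R\ge 0$), so that part is fine, but it does not compensate for the weight-centering problem above.
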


From Theorem~\ref{thm:heatupper}, we have

\begin{prop}\label{prop:scalar}
For any $L>0$, there exists a constant $H = H(A,L,n) >0$ such that $(y,s)$ is an $H$-center of $(x,t)$, provided that $l_{(x,t)}(y,s) \le L$, where $l_{(x,t)}(y,s)$ is the reduced distance between $(x, t)$ and $(y, s)$ (see \cite[Section 7]{Pe1}). In particular, if $R(x,z) \le L/(t-z)$ for any $z \in [s,t)$, then $(x,s)$ is an $H$-center of $(x,t)$.
\end{prop}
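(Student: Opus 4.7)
The plan is to combine the Gaussian upper bound of Theorem~\ref{thm:heatupper} with Perelman's heat-kernel lower bound in terms of the reduced distance, forcing $y$ to lie within distance $O(\sqrt{t-s})$ of any $H_n$-center of $(x,t)$ at time $s$; a one-line triangle inequality will then give the variance estimate defining an $H$-center. First I would set $\tau=t-s$ and $l=l_{(x,t)}$, and observe that Perelman's $\mathcal L$-calculus~\cite{Pe1} (the inequality $\partial_\tau l - \Delta l + |\nabla l|^2 - R + n/(2\tau) \ge 0$) yields $\square^*\!\lc(4\pi\tau)^{-n/2}e^{-l}\rc\le 0$. Since both $(4\pi\tau)^{-n/2}e^{-l}$ and $H$ tend to $\delta_x$ as $\tau\to 0^+$, the maximum principle gives the lower bound
\begin{align*}
H(x,t,y,s)\ \ge\ (4\pi\tau)^{-n/2}\,e^{-l(y,s)},
\end{align*}
which under the hypothesis $l(y,s)\le L$ reads $H(x,t,y,s)\ge c_1(n)\,e^{-L}\,\tau^{-n/2}$.

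\textbf{Sandwiching and the variance bound.} Next I would let $(z_0,s)$ be an $H_n$-center of $(x,t)$ and invoke Theorem~\ref{thm:heatupper} with $\ep=1$:
\begin{align*}
c_1(n)\,e^{-L}\,\tau^{-n/2}\ \le\ H(x,t,y,s)\ \le\ \frac{C(A)}{\tau^{n/2}}\exp\!\lc-\tfrac{d_s^2(z_0,y)}{5\tau}\rc,
\end{align*}
which rearranges to $d_s^2(z_0,y)\le K(A,L,n)\,\tau$. Then using the triangle inequality $d_s^2(y,w)\le 2d_s^2(y,z_0)+2d_s^2(z_0,w)$, integrating against the probability measure $v_{x,t;s}$, and applying the $H_n$-center property of $z_0$, I obtain
\begin{align*}
\Var_s(\delta_y, v_{x,t;s})\ \le\ 2d_s^2(y,z_0)+2\,\Var_s(\delta_{z_0},v_{x,t;s})\ \le\ (2K+2H_n)\,\tau,
\end{align*}
establishing the first assertion with $H=2K+2H_n$.

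\textbf{The ``in particular'' clause and main obstacle.} For the final sentence I would apply the preceding statement to $y=x$ after bounding $l_{(x,t)}(x,s)$ via the constant curve $\gamma(\bar\tau)\equiv x$, $\bar\tau\in[0,\tau]$. Since $R\ge 0$ on a Ricci shrinker,
\begin{align*}
\mathcal L(\gamma)=\int_0^\tau \sqrt{\bar\tau}\,R(x,t-\bar\tau)\,d\bar\tau \le L\int_0^\tau \bar\tau^{-1/2}\,d\bar\tau = 2L\sqrt{\tau},
\end{align*}
so $l_{(x,t)}(x,s)\le \mathcal L(\gamma)/(2\sqrt\tau)\le L$. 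The hard part will be justifying the maximum-principle comparison of the first paragraph on the noncompact $M$; this is standard in the Ricci shrinker setting, because $\boldsymbol{\mu}(g)\ge -A$ gives a well-behaved fundamental solution (cf.~\cite{LW20}, \cite{LW23}) and the quadratic growth of $l$ inherited from $f$ via Lemma~\ref{L201} makes $(4\pi\tau)^{-n/2}e^{-l}$ integrable, so a routine truncation argument closes the estimate.
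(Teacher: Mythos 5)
Your argument is correct and follows essentially the same route as the paper: sandwich the heat kernel between Perelman's lower bound in terms of $l_{(x,t)}$ and the Gaussian upper bound of Theorem~\ref{thm:heatupper} with $\ep=1$ to force $d_s(z_0,y)\le C\sqrt{t-s}$, then pass to the variance by the triangle inequality, and handle the \emph{in particular} clause by the constant-curve estimate on the $\mathcal L$-length. The one difference is cosmetic: where you sketch the derivation of $H(x,t,y,s)\ge (4\pi(t-s))^{-n/2}e^{-l_{(x,t)}(y,s)}$ via the $\mathcal L$-calculus and a truncated maximum principle, the paper simply cites this inequality as \cite[Theorem 16]{LW20}, which is precisely the adaptation of Perelman's bound to the Ricci-shrinker setting that your ``main obstacle'' paragraph anticipates.
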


\begin{proof}
It follows from \cite[Theorem 16]{LW20} that
\begin{align}\label{E402}
H(x,t,y,s) \ge \frac{1}{(4\pi(t-s))^{\frac n 2}} \exp \lc -l_{(x,t)}(y,s) \rc,
\end{align}

Combining \eqref{E401} for $\ep=1$ and \eqref{E402}, it is clear that
\begin{align*}
d_s^2(z,y) \le C_1(t-s)
\end{align*}
for some constant $C_1=C_1(A,L,n)>0$, where $(z,s)$ is an $H_n$-center of $(x,t)$. Consequently, we have
\begin{align*}
\text{Var}_s(\delta_y,v_{x,t;s})=\int d^2_s(y,x) dv_{x,t;s}(x) \le 2\int \lc d^2_s(z,x)+ d^2_s(z,y) \rc dv_{x,t;s}(x) \le 2(H_n+C_1)(t-s)
\end{align*}
since $v_{x,t;s}$ is a probability measure. Therefore, $(y,s)$ is an $H$-center of $(x,t)$, for $H=2(H_n+C_1)$.

The last statement follows from the definition of $l_{(x,t)}(x,s)$ since
\begin{align*}
l_{(x,t)}(x,s) &\le \frac{1}{2\sqrt{t-s}}\int_s^t \sqrt{t-z}\,R(x,z) \,dz \le \frac{L}{2\sqrt{t-s}}\int_s^t \frac{1}{\sqrt{t-z}} \,dz=L.
\end{align*}

\end{proof}

For later applications, we recall the following two-sided pseudolocality theorem for Ricci flows induced by Ricci shrinkers; see \cite[Theorem 1.6]{LW23}.

\begin{thm}[Two-sided pseudolocality theorem] \label{thm:pseudo}
For any $n \in \IN$ and $\alpha > 0$ there is an $\ep (n, \alpha) > 0$ such that the following holds.

Let $(M^n,g(t))_{t \in I}$ be a Ricci flow induced by a Ricci shrinker. Given $(x_0, t_0) \in M \times I$ and $r > 0$, if
\begin{equation*}
|B_{t_0}(x_0,r)| \geq \alpha r^n, \qquad |\Rm| \leq (\alpha r)^{-2} \quad \text{on} \quad B_{t_0}(x_0,r),
\end{equation*}
then
\[ |\Rm|(x,t) \leq (\ep r)^{-2} \quad \text{for} \quad d_{t_0}(x,x_0)<(1-\alpha) r \quad \text{and} \quad t\in [-(\ep r)^2),(\ep r)^2]\cap I.\]
\end{thm}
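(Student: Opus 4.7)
The plan is to treat the forward and backward time directions separately. For the forward direction $t \in [t_0, t_0 + (\ep r)^2] \cap I$, I would invoke Perelman's classical pseudolocality theorem. The hypotheses $|B_{t_0}(x_0,r)| \geq \alpha r^n$ and $|Rm| \leq (\alpha r)^{-2}$ on $B_{t_0}(x_0, r)$, combined with Bishop-Gromov comparison, imply that the isoperimetric profile on a slightly smaller ball is close to Euclidean, which is exactly the hypothesis required by Perelman's pseudolocality, and yields the forward-in-time curvature bound with $\ep = \ep(n, \alpha) > 0$.

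The substantive part is the backward direction $t \in [t_0 - (\ep r)^2, t_0)$, where classical pseudolocality does not apply and the Ricci shrinker structure must enter through the entropy bound. I would argue by contradiction and blow-up. Suppose the conclusion fails: there is a sequence of Ricci shrinker induced flows in $\MM(A)$, points $(x_{0,i}, t_{0,i})$ and scales $r_i$ verifying the hypotheses, yet a sequence of bad points $(y_i, s_i)$ with $d_{t_{0,i}}(y_i, x_{0,i}) < (1-\alpha) r_i$, $0 < t_{0,i} - s_i \leq \ep_i^2 r_i^2$, and $|Rm|(y_i, s_i) > \ep_i^{-2} r_i^{-2}$, where $\ep_i \downarrow 0$. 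After parabolically rescaling so that $r_i = 1$ and performing a Perelman-style point-picking to guarantee that $(y_i, s_i)$ is essentially the first backward time where the bound fails, I rescale again by $Q_i := |Rm|(y_i, s_i) \to \infty$ and extract a smooth pointed ancient Ricci flow limit $(M_\infty, g_\infty(t))_{t \in (-\infty, 0]}$.

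The extraction uses: non-collapsing from Theorem~\ref{T201a}, which is scale-invariant and valid uniformly over $\MM(A)$; curvature bounded by a universal constant on the rescaled spacetime by point-picking; and Shi-type derivative estimates to upgrade $L^\infty$ control to $C^k$ convergence. The contradiction then comes from the heat-kernel machinery built up in the excerpt. On the pre-rescaled flows, the hypotheses force a nearly Euclidean geometry on $B_{t_{0,i}}(x_{0,i}, r_i)$, and by the concentration of the conjugate heat kernel measure $v_{x_{0,i}, t_{0,i}; s_i}$ around an $H$-center (Proposition~\ref{prop:conc}) together with the Gaussian upper bound (Theorem~\ref{thm:heatupper}), the near-Euclidean information from $t_{0,i}$ propagates backward across the parabolic window of size $t_{0,i} - s_i$. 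After blowing up by $Q_i$, this squeezes the limit $g_\infty$ to a flat metric at $t = 0$; combined with the $\boldsymbol{\mu}$-non-collapsing of the limit and backward uniqueness of Ricci flow with Euclidean initial data, one forces $g_\infty \equiv g_E$, contradicting $|Rm|(y_\infty, 0) = 1$.

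The main obstacle is precisely the backward direction, and it hinges on two ingredients: first, the scale-invariant non-collapsing supplied by the $\boldsymbol{\mu}$-bound specific to Ricci shrinker induced flows, without which the blow-up could collapse and no smooth limit would exist; and second, controlling the backward propagation of regularity from a nearly Euclidean slice through the sharp heat kernel concentration recorded in Theorem~\ref{thm:heatupper} and Proposition~\ref{prop:conc}. Without both, the blow-up could be a nontrivial ancient Ricci flow solution rather than a flat one, and no contradiction would be available. A secondary technical point is showing that the point-picking can be carried out respecting the $(1-\alpha)$-shrunk ball constraint, so that the limiting base point does not escape the region where the hypotheses apply.
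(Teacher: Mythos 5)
The paper does not prove this theorem: it is recalled verbatim from \cite[Theorem 1.6]{LW23} and cited as a black box, so there is no in-paper argument against which to compare your proposal. Evaluating the proposal on its own merits, the overall architecture (forward direction by a Perelman-type pseudolocality, backward direction by contradiction, blow-up, and flatness of the limit) is the right shape, but the backward direction as written has a genuine gap.

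The crux of your backward argument is the claim that the conjugate-heat-kernel concentration of Proposition~\ref{prop:conc} together with the Gaussian upper bound of Theorem~\ref{thm:heatupper} ``propagates the near-Euclidean information backward across the parabolic window,'' forcing the blow-up limit to be flat. Concentration of $v_{x_0,t_0;s}$ near an $H$-center and a Gaussian upper bound on $H$ are statements about where the heat kernel lives, not about the geometry of the time-$s$ slice being close to Euclidean; you cannot infer a curvature bound at time $s$ from them. The tool that actually converts curvature and volume control at $t_0$ into geometric control at earlier times is the Nash entropy (or $\mathcal W$-entropy) based at $(x_0,t_0)$, which is nearly zero at scale $\alpha r$ because of the hypotheses and is monotone under parabolic rescaling; feeding this into an $\ep$-regularity theorem (Bamler's, or the version established in \cite{LW23}) is what yields the backward curvature bound. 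You never invoke entropy or $\ep$-regularity, so the decisive implication in your chain is unsupported.

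There are two secondary points worth flagging. First, to apply the Kotschwar-style backward uniqueness you need the future time window $[0,T_\infty]$ of the rescaled flow to survive with $T_\infty>0$, and you need curvature bounded on all of $(s_i,t_{0,i}]$, not merely at $t_{0,i}$; your point-picking (a Perelman-type past parabolic ball selection) controls past curvature but does not obviously give the future control required for the flat-future-slice argument. A cleaner choice is to take $s_i$ to be the first backward time at which the desired bound fails on the shrunk ball, which supplies exactly this one-sided future control. Second, the forward direction cannot simply cite ``Perelman's classical pseudolocality,'' both because the hypothesis here is a volume lower bound plus a curvature bound (not almost-Euclidean isoperimetric) and because Ricci flows induced by Ricci shrinkers are complete with possibly unbounded curvature; the applicable pseudolocality is the version adapted to this setting via the heat kernel and $\boldsymbol{\mu}$-functional estimates of \cite{LW20} and \cite{LW23}, which is precisely where the paper points the reader.
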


Now, we can prove (B) of Theorem \ref{T101}. For readers' convenience, we restate it.

\begin{thm}
\label{thm:canB}
For any positive constants $A$ and $\ep$, there exists a positive constant $\sigma=\sigma(\ep,A)$ satisfying the following property.

Let $(M^2,g,J,f)$ be a K\"ahler Ricci shrinker surface with $\boldsymbol{\mu}(g) \ge -A$. If $R(x)+R^{-1}(x) \le \sigma f(x)$, then $(M,g,J,x)$ is $\ep$-close to the K\"ahler Ricci flow $( \CP^1 \times \C ,g_c(t),J_c) $.
\end{thm}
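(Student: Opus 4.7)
The plan is to argue by contradiction using Bamler's $\IF$-compactness for the associated K\"ahler Ricci flows, extract a splitting $\C$-factor from $\na f$ and $J\na f$, upgrade the weak convergence to smooth Cheeger-Gromov convergence with a classical ancient flow limit, and invoke the classification of~\cite{CL20} to identify the limit as $(\CP^1\times\C,g_c(t),J_c)$.

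First, suppose the conclusion fails. Then there is a sequence $(M_i^2,g_i,J_i,f_i)\in\MM(A)$ with points $x_i\in M_i$ satisfying $R_{g_i}(x_i)+R_{g_i}^{-1}(x_i)\le\sigma_i s_i$ and $\sigma_i\to 0$, where $s_i:=f_i(x_i)$; note that $s_i\to\infty$ and $r_i^2 s_i\ge\sigma_i^{-1}\to\infty$ with $r_i^{-2}:=R_{g_i}(x_i)$. Parabolically rescaling the associated Ricci flow (Definition~\ref{dfn:RA19_1}) by $r_i^{-2}$ about $(x_i,0)$ gives $\tilde g_i(t):=r_i^{-2}g_i(r_i^{2}t)$ with $R_{\tilde g_i}(x_i,0)=1$. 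Coupling this flow with the conjugate heat kernel measures $v_{x_i,0;t}$, Bamler's $\IF$-compactness yields a subsequential limit metric flow pair $(\XX,(\mu_t)_{t\le 0})$. The local scalar curvature estimate from~\cite{LLW21}, Proposition~\ref{prop:scalar}, and the two-sided pseudolocality Theorem~\ref{thm:pseudo} then propagate a uniform curvature bound to a parabolic neighborhood of $x_i$, which upgrades the $\IF$-convergence to smooth Cheeger-Gromov convergence on the regular part of the limit.

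The second step is to extract a splitting direction from the potential. The identity $|\na_{g_i}f_i|^2=f_i-R_{g_i}$ gives $|\na_{\tilde g_i}f_i|^2(x_i,0)\sim r_i^2 s_i\to\infty$, so I normalize $h_i:=(f_i-s_i)/\sqrt{r_i^2 s_i}$. Then $|\na_{\tilde g_i}h_i|^2(x_i,0)=1-R_{g_i}(x_i)/s_i\to 1$, and the shrinker identity yields $\text{Hess}_{\tilde g_i}h_i=\tfrac{r_i}{2\sqrt{s_i}}\tilde g_i-\tfrac{1}{\sqrt{r_i^{2}s_i}}Rc_{\tilde g_i}\to 0$ on any region where $|Rm_{\tilde g_i}|$ is bounded, since $r_i/\sqrt{s_i}\le\sqrt{\sigma_i}\to 0$. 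Tracking $h_i$ in spacetime via~\eqref{E201ab} and passing to the limit produces a function $h_\infty$ on $\XX$ with $|\na h_\infty|\equiv 1$ and $\text{Hess}\,h_\infty\equiv 0$, giving an $\R$-splitting. Because $J_i\na f_i$ is a Killing vector field with $|J_i\na f_i|=|\na f_i|$, the same normalization yields a second commuting parallel direction in the limit, producing a full $\C$-splitting.

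The heart of the argument, and the step I expect to be the main obstacle, is the upgrade to a smooth ancient K\"ahler Ricci flow limit with classical curvature control. Using the $\C$-splitting, a tangent-flow analysis at any putative singular spacetime point, and the K\"ahler orbifold compactness along the lines of Theorem~\ref{T203} applied to appropriate blow-ups, I would show that the singular set is empty. A localized maximum principle for bisectional curvature in the spirit of~\cite{CBL07}, propagated along the splitting direction, gives nonnegative bisectional curvature in the limit; a Toponogov-type comparison argument then yields bounded sectional curvature on each time slice, so the $\IF$-limit is a smooth conventional ancient K\"ahler Ricci flow of complex dimension two with a $\C$-splitting, non-flat since $R(\cdot,0)=1$ at the base point passes to the limit. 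The classification of such flows in~\cite{CL20} then forces the limit to be exactly $(\CP^1\times\C,g_c(t),J_c)$, contradicting the standing assumption that no $(M_i,g_i,J_i,x_i)$ is $\ep$-close to it. Simultaneously propagating local curvature bounds in spacetime, extracting the second Killing direction from the K\"ahler structure across the weak $\IF$-limit, and running the localized maximum principle in the rescaled shrinker setting constitute the technical core.
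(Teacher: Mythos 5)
Your overall strategy is the same as the paper's: argue by contradiction, pass to the $\IF$-limit of parabolically rescaled flows, extract a splitting from $\na f$ (and $J\na f$), upgrade to smooth convergence, and invoke~\cite{CL20}. But there is a concrete gap at the very first step that, as written, would derail the argument.

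\textbf{The rescaling scale.} You rescale by $r_i^{-2}:=R_{g_i}(x_i)$. The paper does not. It defines $r_i$ by the spacetime condition
\begin{align*}
r_i:=\sup\{r>0 \mid R_{g_i}(x_i,t)<r^{-2}\text{ for all }t\in[-r^2,0]\},
\end{align*}
which by construction gives $R_{\tilde g_i}(x_i,t)\le 1$ on the whole backward time interval $[-1,0]$. This backward-in-time curvature control at the base point is exactly what feeds into Proposition~\ref{prop:scalar}: it is what guarantees $(x_i,t)$ is an $H$-center of $(x_i,0)$ for $t\in[-1,0]$, so that the conjugate heat kernel measure concentrates near $x_i$ in the limit. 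With the naive choice $r_i^{-2}=R_{g_i}(x_i)$, you only control $R$ at a single spacetime point $(x_i,0)$; there is nothing preventing $R_{\tilde g_i}(x_i,t)$ from blowing up for $t$ slightly negative, in which case $x_i$ could disappear into the singular set of the $\IF$-limit and none of your subsequent upgrades (smooth convergence near the base, detecting $R=1$ at the base in the limit, non-flatness) would get off the ground. Your appeal to Proposition~\ref{prop:scalar} and pseudolocality ``propagating a uniform curvature bound'' presupposes exactly the backward bound that the paper's definition of $r_i$ is engineered to supply.

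\textbf{Why this forces Part (A) into the proof of Part (B).} Once $r_i$ is defined as above, one has to verify it sits in the right intermediate regime, i.e.\ $s_i^{-1}(r_i^2+r_i^{-2})\to 0$, so that the Hessian term $\tfrac{r_i}{2\sqrt{s_i}}\tilde g_i-\tfrac{1}{r_i\sqrt{s_i}}Rc_{\tilde g_i}$ still vanishes in the limit. This is the paper's Claim~1, and proving $r_i^{-2}/s_i\to 0$ there requires the already-established Part~(A) of the canonical neighborhood theorem (if $r_i^{-2}/s_i\to\delta>0$, one shows $R_{g_i}(x_i)\ge c_1 s_i$, contradicting the hypothesis). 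Your proposal never invokes Part~(A), which signals you have skipped this step entirely rather than found a way around it.

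Two smaller discrepancies worth noting: the paper's removal of the singular set $\MS'$ is not a generic tangent-flow argument; it is the specific $S^2$-fibration / patching argument using the parallel Killing field $W=J_\infty\na f_\infty$ to force the curvature to blow up uniformly along the fiber, contradicting regularity at nearby points. And the curvature-nonnegativity step is run via Hamilton--Ivey pinching on the split-off $3$-dimensional spacetime $\RR'$ (Proposition~\ref{prop_3dRFST}), not via a bisectional curvature maximum principle directly on the $4$-dimensional flow; both could in principle work, but the paper's route is cleaner because it avoids a localized K\"ahler pinching estimate.
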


\begin{proof}
For fixed $A$ and $\bar{\ep}$, if no such $\sigma$ exists, one can obtain a sequence of K\"ahler Ricci shrinker surfaces $(M^2_i, g_i,J_i,f_i) \in \mathcal M(A)$ with $x_i \in M_i$ satisfying
\begin{align}\label{E404}
\lim_{i \to \infty} \frac{1}{R_{g_i}(x_i) f_i(x_i)}+\frac{R_{g_i}(x_i)}{f_i(x_i)} =0,
\end{align}
but $(M_i,g_i,J_i,x_i)$ is not $\bar{\ep}$-close to $( \CP^1 \times \C ,g_c(t))$. We set $s_i:=f_i(x_i)$, then it is clear from \eqref{E404} that
\begin{align*}
\lim_{i \to \infty} s_i=+\infty.
\end{align*}

Now, we define
\begin{align}\label{E405}
r_i:= \sup\{ r>0 \mid R_{g_i}(x_i,t) < r^{-2} \quad \text{for any} \quad t\in [-r^2,0]\}.
\end{align}

\textbf{Claim 1:} For $r_i$ defined in \eqref{E405}, we have
\begin{align}\label{E407}
\lim_{i \to \infty} s_i^{-1} (r_i^{2}+r_i^{-2})=0.
\end{align}

\emph{Proof of Claim 1}: Since $R_{g_i}(x_i)s_i\le r_i^{-2} s_i$, it is clear from \eqref{E404} that $s_i^{-1}r_i^2 \to 0$. Therefore, we only need to prove $s_i^{-1}r_i^{-2} \to 0$.

For simplicity, we set $x^t:=\psi_i^t(x)$ for any $x \in M_i$, where $\psi_i^t$ is the family of diffeomorphisms in \eqref{E201a}. If \eqref{E407} fails, we assume, by taking a subsequence,
\begin{align}\label{E407xa}
\lim_{i \to \infty} s_i^{-1}r_i^{-2}=\delta \in (0,\infty].
\end{align}

In particular, since $s_i \to +\infty$, \eqref{E407xa} implies that $r_i \to 0$. From the definition \eqref{E405}, there exists a $\tau_i \in [0,r_i^2]$ such that $R_{g_i}(x_i,-\tau_i) =r_i^{-2}$. For any $t \in [-\tau_i,0]$, we have
\begin{align*}
0 \le \frac{df_i(x_i^t)}{dt}=\frac{|\na_{g_i} f_i|_{g_i}^2}{1-t}(x_i^t) \le \frac{f_i(x_i^t)}{1-t}.
\end{align*}
By solving the ODE, it implies that for $t \in [-\tau_i,0]$,
\begin{align}\label{E407c}
\frac{s_i}{1-t} \le f_i(x_i^t) \le s_i.
\end{align}

Now, we estimate the distance between $x_i$ and $x_i^{-\tau_i}$ by considering the path $x_i^t$ for $t \in [-\tau_i,0]$. More precisely,
\begin{align}\label{E407d}
d_{g_i}(x_i,x_i^{-\tau_i}) \le \int_{-\tau_i}^0 |\partial_t x_i^t|_{g_i} \,dt =\int_{-\tau_i}^0 \frac{|\na_{g_i} f_i|_{g_i}}{1-t}(x_i^t) \,dt \le \int_{-\tau_i}^0 \sqrt{f_i(x_i^t)} \,dt \le s^{\frac 1 2}_i \tau_i \le s^{\frac 1 2}_i r_i^2.
\end{align}

On the other hand, we have
\begin{align}\label{E407dx}
R_{g_i}(x_i^{-\tau_i})=(1+\tau_i)R_{g_i}(x_i,-\tau_i)=(1+\tau_i)r_i^{-2}.
\end{align}

Combining \eqref{E407xa}, \eqref{E407c}, \eqref{E407dx} and the fact that $R_{g_i}(x_i^{-\tau_i}) \le f_i(x_i^{-\tau_i})$, we conclude that 
\begin{align}\label{E407xb}
1 \ge \frac{R_{g_i}(x_i^{-\tau_i})}{f_i(x_i^{-\tau_i})} \ge \frac{(1+\tau_i)r_i^{-2}}{s_i} \ge \frac{\delta}{2},
\end{align}
if $i$ is sufficiently large. In particular, we have $\delta \in (0,2]$. Moreover, it follows from \eqref{E407d} that
\begin{align}\label{E407e}
d_{g_i}(x_i,x_i^{-\tau_i}) \le 2 \delta^{-1} s_i^{-\frac 1 2}
\end{align}
if $i$ is sufficiently large. In addition, it follows from \eqref{E407c} that if $i$ is sufficiently large,
\begin{align}\label{E407ex}
s_i \ge f_i(x_i^{-\tau_i}) \ge \frac{s_i}{1+\tau_i} \ge \frac{s_i}{1+r_i^2} \ge \frac{s_i}{1+2\delta^{-1}s_i^{-1}} \ge \frac{s_i}{2}.
\end{align}

Consequently, it follows from \eqref{E407xb}, \eqref{E407e}, \eqref{E407ex} and Theorem \ref{T101} (A) that 
\begin{align}\label{E407f}
R_{g_i}(x_i) \ge c_1 s_i
\end{align}
for some constant $c_1=c_1(A,\delta)>0$. However, \eqref{E407f} contradicts \eqref{E404}, and Claim 1 is proved.

Next, we define for $t \in I:=(-\infty,0]$
\begin{align}\label{E408}
\tilde g_i(t):=r_i^{-2}g_i(r_i^{2}t) \quad \text{and} \quad \tilde f_i(t):=s_i^{-\frac 1 2} r_i^{-1}(F_i(r_i^2t)-s_i),
\end{align}
where the function $F_i(t)$ is defined above \eqref{E201aa}.

Now, we regard the pointed Ricci flow $(M_i, \tilde g_i(t),x_i)_{t \in I}$ as a metric flow pair $(\XX^i,(\mu^i_t)_{t \in I})$, as described in Example \ref{exmp:ricci}. Then it follows from Theorem \ref{Fcom} that there is a correspondence $\CF$ between the metric flows $\XX^i$, $i \in \IN \cup \{ \infty \}$, such that on compact time-intervals
\begin{equation} \label{E409}
(\XX^i, (\mu^i_t)_{t \in I}) \xrightarrow[i \to \infty]{\quad \IF, \CF \quad} (\XX^\infty, (\mu^\infty_t)_{t \in I}).
\end{equation}
Moreover, it follows from Theorem \ref{Thm_limit} that $\XX^\infty$ admits a decomposition
\[ \XX^{\infty}_{0}=\{x_{\infty}\},\quad \XX^{\infty}_{t<0}=\mathcal R \sqcup \mathcal S, \]
such that $\mathcal R$ is given by an $4$-dimensional Ricci flow spacetime $(\mathcal R, \tf, \partial^{\infty}_{\tf}, g^{\infty})$.

\textbf{Claim 2}: $\tilde f_i$ converges smoothly to a function $f_{\infty}$ on $\RR$ such that
\begin{equation} \label{E410}
\partial^\infty_{\tf} f_{\infty}\equiv 0, \quad \text{Hess}_{g^\infty} f_{\infty}\equiv 0, \quad \text{and} \quad |\na_{g^\infty} f_{\infty}|^2 \equiv 1. 
\end{equation}
\emph{Proof of Claim 2}: From the definitions \eqref{E408}, we compute from \eqref{E201aa}-\eqref{E201ac} that
\begin{align}
\partial_t \tilde f_i(t)&=-s_i^{-\frac 1 2} r_i(1-r_i^2t)R_{g_i(r_i^2 t)}=-s_i^{-\frac 1 2} r_i^{-1}(1-r_i^2t)R_{\tilde g_i(t)}, \label{E410a} \\
\text{Hess}_{\tilde g_i(t)} \tilde f_i(t) &= s_i^{-\frac 1 2} r_i^{-1} \lc \frac{g_i(r_i^2t)}{2}-(1-r_i^2t)\Rc_{g_i(r_i^2t)} \rc =\frac{1}{2} s_i^{-\frac 1 2} r_i \tilde g_i(t)-s_i^{-\frac 1 2} r_i^{-1}(1-r_i^2t) \Rc_{\tilde g_i(t)}, \label{E410b} \\
|\na_{\tilde g_i(t)} \tilde f_i|^2_{\tilde g_i(t)} &=s_i^{-1} \lc F_i(r_i^2 t)-(1-r_i^2 t)^2 R_{g_i(r_i^2 t)} \rc=1+s_i^{-\frac 1 2} r_i \tilde f_i(t)-s_i^{-1} r_i^{-2} (1-r_i^2t)^2 R_{\tilde g_i(t)}. \label{E410c}
\end{align}

Notice that by the definition of $r_i$ in \eqref{E405}, $R_{\tilde g_i}(x_i,t) \in [0,1]$ for any $t \in [-1,0]$. Therefore, it follows from Proposition \ref{prop:scalar} that there exists a constant $H=H(A) \ge H_4$ such that $(x_i,t)$ is an $H$-center of $(x_i,0)$ for any $t \in [-1,0]$.

Now, we fix a small constant $\delta>0$ and choose a time $-\tau \in (-\delta/2,0)$ such that the convergence \eqref{E409} is uniform at $t=-\tau$. Notice that the existence of $-\tau$ is 
guaranteed by Theorem \ref{Fcom}. Next, we fix a point $y \in \RR_{-\tau}$ and define $y_i \in M_i$ by $\phi_i(y)=(y_i,-\tau)$, where the maps $\phi_i$ are given by Theorem \ref{Thm_limit}(4).

It is clear that there exists a constant $L_1>0$ such that for sufficiently large $i$,
\begin{equation} \label{E411}
d_{\tilde g_i(-\tau)} (x_i,y_i) \le L_1.
\end{equation}
Otherwise, if $d_{\tilde g_i(-\tau)} (x_i,y_i) \to \infty$ by taking a subsequence, then it follows from Proposition \ref{prop:conc} that $\mu^i_{-\tau}(B_{\tilde g_i(-\tau)}(y_i,1)) \to 0$, since $(x_i,-\tau)$ is an $H$-center of $(x_i,0)$. 
However, it contradicts Theorem \ref{Thm_limit}(4)(a) since the conjugate heat kernel $v^i_{-\tau}$ of $\mu^i_{-\tau}$ converges smoothly to $v^\infty_{-\tau}$ of $\mu^\infty_{-\tau}$.

From \eqref{E411}, there exists a constant $L_2>0$ such that for sufficiently large $i$,
\begin{equation} \label{E412}
R_{\tilde g_i}(y_i,-\tau)+|\tilde f_i(y_i,-\tau)| \le L_2.
\end{equation}
Indeed, the fact that $R_{\tilde g_i}(y_i,-\tau)$ is uniformly bounded follows immediately from Theorem \ref{Thm_limit}(4), since $R_{\tilde g_i}(y_i,-\tau) \to R_{g^\infty}(y)$. Since $\tilde f_i(x_i,0)=0$, we derive from \eqref{E410a} and \eqref{E407} that
\begin{equation} \label{E412a}
|\tilde f_i(x_i,-\tau)| \le s_i^{-\frac 1 2} (r_i^{-1}+r_i \tau) \to 0.
\end{equation}
In addition, it follows from \eqref{E410c} that
\begin{equation} \label{E412b}
|\na_{\tilde g_i(-\tau)} \tilde f_i(-\tau)|^2_{\tilde g_i(-\tau)} \le 1+s_i^{-\frac 1 2} r_i \tilde f_i(-\tau)
\end{equation}
Combining \eqref{E411}, \eqref{E412a} and \eqref{E412b}, it is easy to see that $\tilde f_i(y_i,-\tau)$ is uniformly bounded.

Next, we consider any compact set $K \subset \RR_{t<-\delta}$. Since $\RR$ is a path-connected Ricci flow spacetime, one can construct a family of smooth curves $\gamma_z(t) \in \RR$ for $z \in K$ and $t \in [\tf(z),-\tau]$ such that 
\begin{enumerate}[label=\textnormal{(\alph{*})}]
\item $\gamma_z(t) \in \RR_t$ for any $t \in [\tf(z),-\tau]$ with $\gamma_z(\tf(z))=z$ and $\gamma_z(-\tau)=y$.
\item $\bigcup_{z \in K} \bigcup_{t \in [\tf(z),-\tau]} \{\gamma_z(t)\}$ is contained in a compact set $K' \subset \RR$.
\item $\sup_{z \in K}\sup_{t \in [\tf(z),-\tau]} |\gamma_z'(t)\vert_{\ker (d \mathfrak{t} )}|_{g^\infty_t} \le L_3$.
\end{enumerate}

We define $K_i=\phi_i(K)$ and $K'_i=\phi_i(K')$ for sufficiently large $i$. It follows immediately from Theorem \ref{Thm_limit}(4)(a) that there exists a constant $L_4>0$ such that
\begin{equation} \label{E412c}
R_{\tilde g_i} \le L_4
\end{equation}
on $K_i'$. For any $z \in K$, $\{(\gamma_z^i(t),t):=\phi_i(\gamma_z(t)) \} \subset K_i'$ is a spacetime curve connecting $(y_i,-\tau)$ with $\phi_i(z)$, for any $z \in K_i$. We compute directly from \eqref{E410a} and \eqref{E410c} that
\begin{equation} \label{E412d}
\left| \frac{d}{dt} \tilde f_i(\gamma^i_z(t),t) \right| \le C \lc s_i^{-\frac 1 2} (r_i^{-1}+r_i)+ \sqrt{1+s_i^{-\frac 1 2} r_i \tilde f_i(\gamma^i_z(t),t)} \rc,
\end{equation}
where we have used (c) above, \eqref{E412c} and Theorem \ref{Thm_limit}(4)(a). Combining \eqref{E407}, \eqref{E412} and \eqref{E412d}, it is easy to see that on $K_i$, for some constant $L_5>0$,
\begin{equation} \label{E412e}
|\tilde f_i| \le L_5.
\end{equation}

In other words, we conclude from \eqref{E412e} that the pull-back function $\phi_i^*(\tilde f_i)$ is locally uniformly bounded on $\RR_{t<-\delta}$. 
Since the convergence \eqref{E409} is smooth on $\RR$ by Theorem \ref{Thm_limit}(4), we can easily derive from \eqref{E407}, \eqref{E410a}, \eqref{E410b} and \eqref{E410c}, 
by taking a diagonal subsequence, that $\phi_i^*(\tilde f_i)$ converges smoothly to a function $f_{\infty}$ on $\RR_{t<-\delta}$ such that
\begin{equation*} 
\partial^\infty_{\tf} f_{\infty}\equiv 0, \quad \text{Hess}_{g^\infty} f_{\infty}\equiv 0, \quad \text{and} \quad |\na_{g^\infty} f_{\infty}|^2 \equiv 1. 
\end{equation*}
Since $\delta$ is arbitrary, we conclude, by further taking a diagonal sequence, that $f_{\infty}$ is globally defined on $\RR$ and \eqref{E410} holds. In sum, Claim 2 is proved.

\textbf{Claim 3:} For the limit $\XX^\infty$ in \eqref{E409}, we have
\begin{equation} \label{E413}
\XX^\infty_{t<0}=\RR' \times \R
\end{equation}
where $\RR'$ is a $3$-dimensional Ricci flow spacetime.

\emph{Proof of Claim 3}: From \eqref{E410} and the fact that $(\XX^\infty_t,d_t)$ is a completion of $(\RR_t, g^\infty_t)$, it can be derived from \cite[Theorem 15.50]{Bam20c} that
\begin{equation*} 
\XX^\infty_{t<0}=\XX' \times \R,
\end{equation*}
where $\XX'$ is a metric flow over $I'=(-\infty,0)$ with $\XX'=\RR' \sqcup \MS'$ and $(\RR', \tf', \partial'_{\tf}, g')$ is a $3$-dimensional Ricci flow spacetime such that $\RR' \times \R=\RR$ and $\MS' \times \R=\MS$. To finish the proof of Claim 3, we only need to prove $\MS'=\emptyset$.

Suppose otherwise. We fix a point $(\bar z, \bar t) \in \MS'$. It follows from Theorem \ref{Thm_limit}(2) that any tangent flow $(M_{\infty},g_{\infty})$ at $(\bar z, \bar t)$ is isometric to standard $S^2 \times \R$, $\RP^2 \times \R$ or $S^2\times_{\Z_2} \R$, where $\Z_2$ acts on $\R$ by reflection and on $S^2$ by the antipodal map.
 From the definition of the tangent flow and \cite[Theorem 9.21]{Bam20b}, there exist a sequence of times $t_j \nearrow \bar t$ and points $z_j \in \RR'_{t_j}$ such that $\MS'_{t_j}=\emptyset$ and $(\RR'_{t_j},R_{g'}(z_j) g'_{t_j},z_j)$ converges smoothly to $(M_{\infty},g_{\infty})$. 
 Here, the choice of $t_j$ is possible thanks to $\text{dim}_{\MM^*}\MS' \le 1$ by Theorem \ref{Thm_limit}(1). Moreover, since $(M_{\infty},g_{\infty})$ has constant scalar curvature $1$ and the corresponding convergence is smooth, the scaling factor can be $R_{g'}(z_j)$.

If $(M_{\infty},g_{\infty}) = \RP^2 \times \R$, then, by the smooth convergence on $\RR$, there exists a smooth embedding $\RP^2 \times (-1,1)^2$ to $M_i$, if $i$ is sufficiently large. However, it contradicts the orientability of $M_i$.

If $(M_{\infty},g_{\infty}) = S^2 \times \R$ or $S^2\times_{\Z_2} \R$, in either case, for any $\ep>0$, we can choose $z_j' \in \RR'_{t_j}$ near $z_j$, such that $(\RR'_{t_j}, g'_{t_j},z'_j)$ is $\ep$-close to $S^2 \times \R$, if $j$ is sufficiently large. 

Now, it follows from the smooth convergence on $\RR$ that the complex structure $J_i$, through the pull-back of $\phi_i$, converges smoothly to a complex structure $J_{\infty}$ on $\RR$. Therefore, it is easy to see from \eqref{E410} that the vector field $W:=J_{\infty}(\na_{g^\infty} f_{\infty}) \in \ker(d\tf)$ satisfies
\begin{equation} \label{E413a}
\partial^\infty_{\tf} W \equiv \na_{g^\infty} W\equiv 0, \quad |W|_{g^\infty}\equiv1 ,\quad \text{and} \quad \la \na_{g^\infty} f_{\infty},W \ra_{g^{\infty}}\equiv 0.
\end{equation}

From \eqref{E413a}, $W$ can be regarded as a time-independent parallel vector field on $\RR'$ with $W \in \ker(d\tf') $. We denote the integral curve of $W$ on $\RR'_{t_j}$ by $\gamma(t)$ such that $\gamma(0)=z_j'$. Since $W$ is parallel and $(\RR'_{t_j}, g'_{t_j})$ is complete, it follows that $(\RR'_{t_j}, g'_{t_j},\gamma(t))$ is $\ep$-close to $S^2 \times \R$ for all $t \in \R$. 
Consequently, we can patch all parts together to form an $S^2$-fibration where the fibers are nearly totally geodesic (cf. \cite[Appendix]{MT07} \cite[Section 67]{KL08} \cite[Section 7.3]{CZ06}), and conclude that one of the following two cases holds; see Figure \ref{fig1} and Figure \ref{fig2}.

\begin{enumerate}[label=\textnormal{(\roman*)}]
\item $\RR'_{t_j}$ is diffeomorphic to $S^2 \times \R$.
\item $\RR'_{t_j}$ is diffeomorphic to $S^2 \times S^1$.
\end{enumerate}

\begin{figure}[H]
\centering
\includegraphics[scale=0.2]{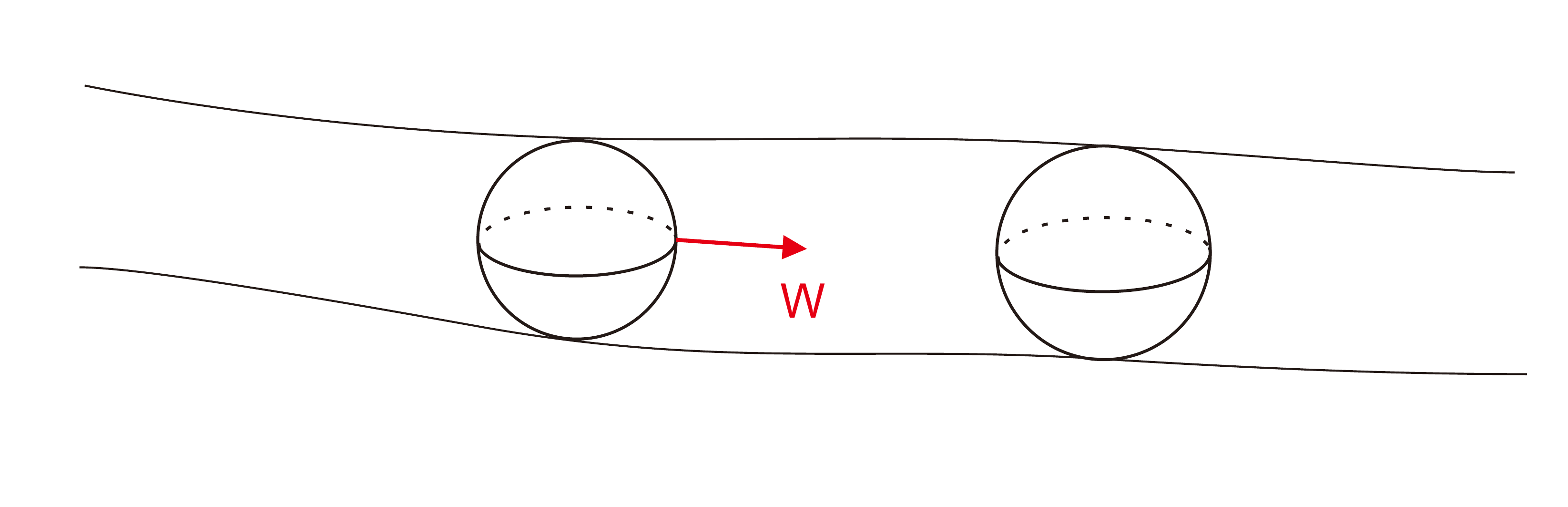}
\vspace*{-2mm}
\caption{$\RR'_{t_j}\cong S^2 \times \R$}
\label{fig1}
\end{figure}

\begin{figure}[H]
\centering
\includegraphics[scale=0.3]{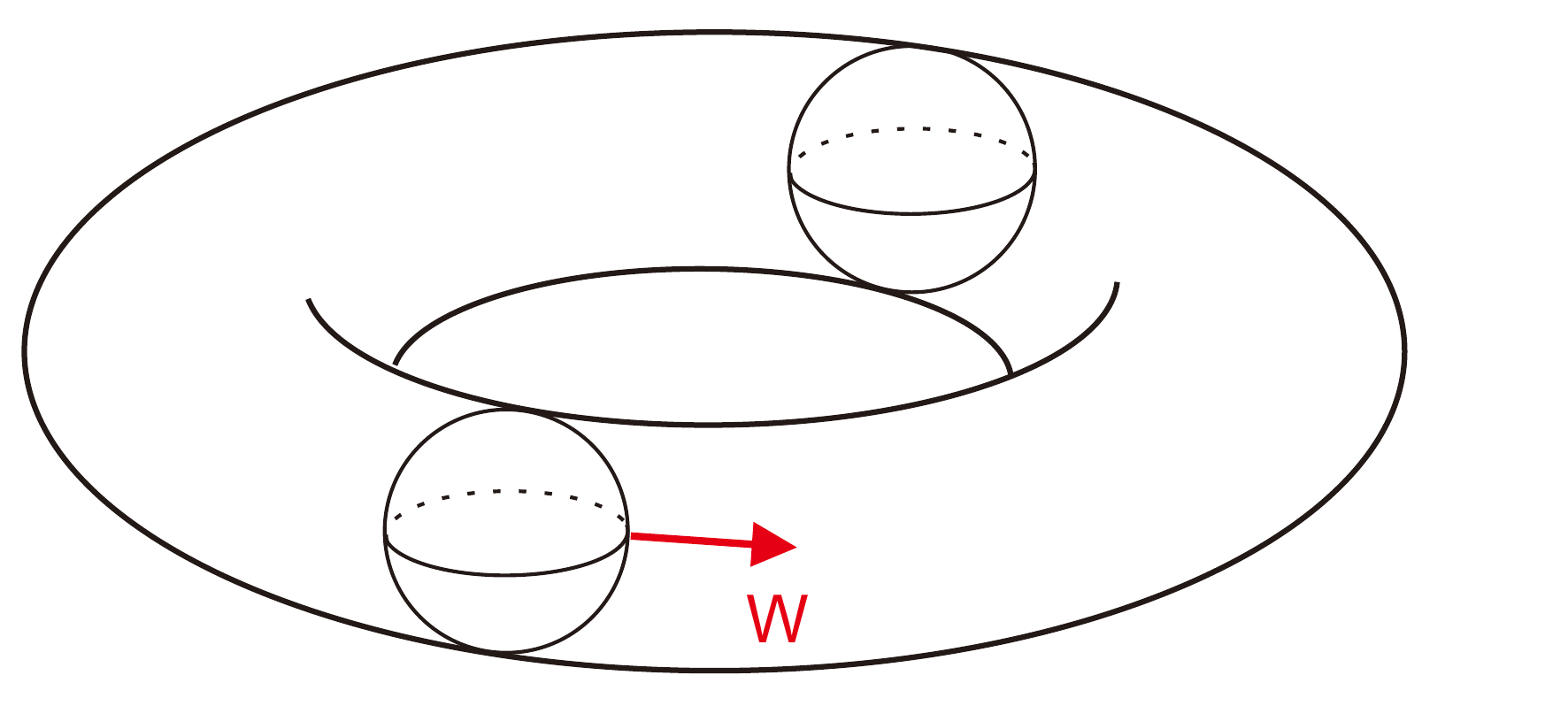}
\vspace*{+5mm}
\caption{$\RR'_{t_j}\cong S^2 \times S^1$}
\label{fig2}
\end{figure}

Notice that here $\RR'_{t_j}$ cannot be diffeomorphic to the non-orientable $S^2$-bundle over $S^1$ since otherwise, it contradicts the orientability of $M_i$ as above. In any case, $(\RR'_{t_j}, g'_{t_j},z)$ is $\ep$-close to $S^2 \times \R$, for any point $z\in \RR'_{t_j}$. 
Moreover, since $W$ is parallel, $|\Rm_{g'_{t_j}}|$ is almost a constant on $\RR'_{t_j}$ and hence $|\Rm_{g'_{t_j}}| \to +\infty$ everywhere uniformly, as $j \to \infty$. However, for a point $z' \in \RR'_{\bar t}$, we can find a sequence $z''_j \in \RR'_{t_j}$ converges to $z'$ in $\RR'$. Therefore, we obtain a contradiction and hence $\MS'= \emptyset$.
In sum, Claim 3 is proved.

\textbf{Claim 4:} The Ricci flow spacetime $\RR'$ is isometric to the conventional Ricci flow $(\R^3,g_E)$ or $(S^2 \times \R,g_c'(t))_{t <0}$, where $g_c'(t)$ is the standard product metric.

\emph{Proof of Claim 4}: From \eqref{E413}, we conclude that $(\RR'_t,g'_t)$ is complete for any $t<0$. Therefore, it follows from Proposition \ref{prop_3dRFST} that $(\RR'_t,g'_t)$ has nonnegative sectional curvature. 
Now, we only need to prove $\RR'$ is given by a conventional Ricci flow $(M_{\infty},g'(t))_{t \in I'}$ such that $(M_{\infty},g'(t))_{t \in I'}$ has bounded curvature on each compact time-intervals. 
Indeed, in this case, it follows from Theorem \ref{T201a} that $(M_{\infty} \times \R,g'(t) \times g_E,J_{\infty})_{t \in I'}$ is a $\kappa$-noncollapsed ancient solution to the K\"ahler Ricci flow with nonnegative bisectional curvature. 
Then we conclude from \cite[Theorem 1.3]{CL20} that $(M_{\infty} \times \R,g'(t) \times g_E,J_{\infty})_{t \in I'}$ is isometrically biholomorphic to $(\C^2,g_E)$ or $(\CP^1 \times \C,g_c(t))$. Therefore, Claim 4 follows immediately.

To prove that $\RR'$ is given by a conventional Ricci flow with bounded curvature on each compact time-interval, we only need to prove $(\RR'_t,g'_t)$ has uniformly bounded curvature by the two-sided pseudolocality Theorem \ref{thm:pseudo}.

Given $\bar t<0$, we suppose $(\RR'_{\bar t},g'_{\bar t})$ has unbounded curvature. By a well-known point-picking argument, there exists a sequence of points $z_j \in \RR'_{\bar t}$ with $Q_j:=R_{g'}(z_j) \to \infty$ such that for any $L>0$, $R_{g'}(y) \le 2Q_j$ for any $y \in B_{g'_{\bar t}}(z_j,2Q_j^{-\frac 1 2} L)$. 
From the two-sided pseudolocality theorem \ref{thm:pseudo}, there exists a small constant $\delta_0=\ep_0/2$ such that one obtains an embedding of $N_j \times (-2\delta_0 Q_j^{-1},2\delta_0 Q_j^{-1})$ to a neighborhood of $z_j$ in $\RR'$, on which $g'$ is given by the conventional Ricci flow, where $N_j=B_{g'_{\bar t}}(z_j,\delta_0 L Q_j^{-\frac 1 2})$. 
Next, we set $z'_j=(z_j,\delta_0 Q_j^{-1}) \in \RR'_{\bar t+\delta_0 Q_j^{-1}}$ under this identification. By shifting the time by $\bar t+\delta_0 Q_j^{-1}$ and rescaling parabolically $g'$ by $Q_i$, the metric flow pairs $\lc \RR'_{t \le \bar t+\delta_0 Q_j^{-1}}, (v_{z_j';t})_{t \le \bar t+\delta_0 Q_j^{-1}} \rc$,
 by taking a subsequence, $\IF$-converge to a metric flow pair $(\XX'',(\mu_t'')_{t \in I})$ with $\XX''=\RR''\sqcup \MS''$ and $(\RR'', \tf'', \partial''_{\tf}, g'')$ is a $3$-dimensional Ricci flow spacetime. Moreover, the parallel vector field $W$ also converges smoothly to a vector field $W''$ on $\RR''$ satisfying as \eqref{E413a}
\begin{equation} \label{E413aa}
\partial_{\tf''} W'' \equiv \na_{g''} W'' \equiv 0 \quad \text{and} \quad \quad |W''|_{g''}\equiv1.
\end{equation}

Therefore, by the same arguments as in the proof of Claim 3, we immediately conclude that $\MS''=\emptyset$. Since $(\RR',g')$ has nonnegative sectional curvature, $(\RR''_{-\delta_0},g''_{-\delta_0})$ splits a line from Toponogov’s splitting theorem. 
Since $(\RR''_{-\delta_0},g''_{-\delta_0})$ is nonflat by our construction, it is clear that the parallel vector field $W''$ is along the splitting direction. Therefore, it follows from \eqref{E413aa} that $W''$ induces a splitting direction for each $t\le -\delta_0$. In other words, we conclude that
\begin{equation*}
\RR''_{t \le -\delta_0}= \RR''' \times \R,
\end{equation*}
where $\RR'''$ is a $2$-dimensional Ricci flow spacetime defined on $(-\infty,-\delta_0]$. Now, we can repeat the above argument to show that $\RR'''$ is given by a conventional Ricci flow, which is a nonflat $\kappa$-noncollapsed ancient solution to the Ricci flow with nonnegative and bounded curvature on each compact time-interval. 
Therefore, it is known from \cite{Pe1} that such a solution must be isometric to standard $S^2$ or $\RP^2$. Notice the latter is excluded because of orientability as before. From the smooth convergence at $-\delta_0$ and the fact that the rescaling factor $Q_i \to +\infty$, we conclude that $(\RR'_{\bar t},g'_{\bar t})$ contains cylindrical $S^2 \times \R$-like regions with arbitrarily small radius. However, it contradicts \cite[Proposition 2.2]{CZ06a} (see also the proof of \cite[Theorem 46.1]{KL08}) since $(\RR'_{\bar t},g'_{\bar t})$ has nonnegative sectional curvature.

In sum, we have proved that $(\RR'_t,g'_t)$ has uniformly bounded curvature for any $t<0$. Therefore, Claim 4 is confirmed by the reasons mentioned above.

\textbf{Claim 5}: $(M_i,\tilde g_i(t),J_i,x_i)_{t \in I}$ converges smoothly to $(\CP^1 \times \C ,g_c(t) ,J_c)_{t \in I}$.

\emph{Proof of Claim 5:} From Claim 3 and Claim 4, we deduce that $\XX^\infty_{t<0}$ is a given by the conventional K\"ahler Ricci flow $(\C^2,g_E)$ or $(\CP^1 \times \C ,g_c(t))_{t<0}$. As proved before, $(x_i,\bar t)$ is an $H$-center of $(x_i,0)$ with respect to $\tilde g_i(t)$ for any $\bar t \in [-1,0)$. 
We choose $\bar t =-\ep_0$, where $\ep_0$ is a small constant determined later. It follows from Proposition \ref{prop_limit2} that for any $\bar t \in [-1,0)$,
\begin{equation} \label{E415}
(M_i,\tilde g_i(t), x_i)_{t \le \bar t} \longright{pointed-{C}^{\infty}-Cheeger-Gromov} (M_\infty, g_{\infty}(t))_{t \le \bar t},
\end{equation}
where $(M_\infty, g_{\infty}(t))=(\C^2,g_E)$ or $(\CP^1 \times \C ,g_c(t))$. In either case, since the convergence \eqref{E415} is smooth at $t=\bar t$ and $R_{\tilde g_i}(x_i,\bar t) \le 1$, it is clear that for any $L>1$, $|\Rm_{\tilde g_i}| \le 1.1$ on $B_{\tilde g_i(\bar t)}(x_i,L)$, if $i$ is sufficiently large. 
Therefore, it follows from Theorem \ref{thm:pseudo} and Shi's local estimates \cite{Shi89a} that
\begin{equation} \label{E415a}
(M_i,\tilde g_i(t), x_i)_{t \le 0 } \longright{pointed-{C}^{\infty}-Cheeger-Gromov} (M_\infty, g_{\infty}(t))_{t \le 0},
\end{equation}
if $\ep_0$ is sufficiently small. Since $R_{\tilde g_i}(x_i,-\tau_i)=1$ for some $\tau_i \in [0,1]$ by our definition \eqref{E405}, we immediately conclude from \eqref{E415a} that $(M_\infty, g_{\infty}(t))$ is nonflat. Therefore, $(M_\infty, g_{\infty}(t))=(\CP^1 \times \C ,g_c(t))$ by the uniqueness of the Ricci flow \cite{CZ06}. 
Moreover, since the complex structure compatible with $g_c$ is unique, $J_i$ converges smoothly to $J_c$, and hence Claim 5 is proved.

In summary, we have proved 
\begin{equation*}
(M_i,\tilde g_i(t), J_i,x_i)_{t \le 0} \longright{pointed-{C}^{\infty}-Cheeger-Gromov} (\CP^1 \times \C ,J_c, g_c(t))_{t \le 0}.
\end{equation*}
Therefore, we obtain a contradiction, and the proof of the theorem is complete.
\end{proof}

\section{Proof of the main theorem}

This section aims to prove Theorem \ref{T102}. Obviously, we only need to consider noncompact K\"ahler Ricci shrinkers. We first recall the following result from \cite[Theorem 0.1]{MW15}.

\begin{thm}\label{thm:oneend}
Let $(M^m,g,J,f)$ be a noncompact K\"ahler Ricci shrinker. Then $(M,g)$ has only one end.
\end{thm}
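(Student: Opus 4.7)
My plan is to argue by contradiction: assume $(M,g,J,f)$ has at least two ends, reduce to a statement about connectedness of level sets of $f$, and then exploit the K\"ahler structure to derive a contradiction by splitting off a $\C$-factor.

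\emph{Step 1 (Reduction to connectedness of level sets)}. By Lemma~\ref{L201}, $f$ is proper with $f(x)\sim d(x,p)^2/4$, so each level set $\Sigma(t)=\{f=t\}$ is compact. From \eqref{E101}, $|\nabla f|^2=f-R\ge 0$; for non-Gaussian shrinkers the strong maximum principle applied to $R$ gives $R<f$ in a neighborhood of infinity, so the critical set of $f$ lies in a compact region. Hence for all sufficiently large $t$, $\Sigma(t)$ is a smooth compact hypersurface, and the flow of $\nabla f/|\nabla f|^2$ provides diffeomorphisms $\Sigma(s)\cong\Sigma(t)$ matching each end of $M$ bijectively with a connected component of $\Sigma(t)$. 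The theorem reduces to showing $\Sigma(t)$ is connected for $t$ large.

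\emph{Step 2 (Producing a line)}. Suppose for contradiction that $\Sigma(t)$ has at least two components for some large $t$, belonging to distinct ends $E_1$ and $E_2$. Picking sequences $x_n^i \in E_i$ with $f(x_n^i)\to\infty$, any minimizing geodesic from $x_n^1$ to $x_n^2$ must pass through the compact region separating the ends, and a standard Arzel\`a-Ascoli extraction produces a complete minimizing geodesic $\gamma\colon\R\to M$, i.e., a line.

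\emph{Step 3 (Splitting and K\"ahler rigidity)}. The shrinker equation \eqref{E100} gives a strictly positive Bakry-\'Emery Ricci tensor $Rc+\he f=\tfrac12 g>0$. A soliton-adapted splitting argument via the Busemann function of $\gamma$ (compensating for the unboundedness of $f$ by using the explicit expansion induced by the flow of $\nabla f$, rather than quoting Fang-Li-Zhang directly) yields an isometric splitting $M=N\times\R$; restricting the shrinker equation to the $\R$-factor gives $\he f\!\mid_\R=\tfrac12$, so $f\!\mid_\R$ is Gaussian of the form $s^2/4+\text{const}$. Since $J$ is parallel, $J(\partial_s)$ is a parallel Killing field orthogonal to $\partial_s$; de Rham then splits $M$ further as $M=N''\times\R^2$, and the K\"ahler condition identifies this with $M\cong N''\times\C$ carrying the Gaussian on the $\C$-factor. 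But $N''\times\C$ has exactly one end, contradicting the assumption of two ends in $M$.

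The hardest part is the splitting invoked in Step~3: classical Cheeger-Gromoll requires $Rc\ge 0$ and the standard weighted versions require $f$ bounded, both of which fail on shrinkers. A shrinker-specific Busemann argument that uses the explicit behavior of the flow of $\nabla f$ at infinity (together with the fact that $Rc_f\equiv\tfrac12 g$ is not merely nonnegative but equal to the metric) to control the line and force a metric product splitting is required, and this is where the main technical work lies.
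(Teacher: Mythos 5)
Your proposal turns on the splitting claimed in Step~3, and you correctly identify it as the crux, but the gap there is genuine and not just a matter of "technical work to be filled in." The Busemann approach to weighted Cheeger--Gromoll splitting is blocked \emph{structurally} on shrinkers: Busemann functions are not $f$-superharmonic when $f$ grows. Already on the Gaussian soliton $(\R^n, g_E, f=|x|^2/4)$, the Busemann function of the line $\gamma(t)=(t,0,\dots,0)$ is $b(x)=x_1$, and $\Delta_f b = \Delta b - \langle\nabla f,\nabla b\rangle = -x_1/2$ changes sign, so $\Delta_f b \le 0$ fails on an entire half-space. All the weighted Cheeger--Gromoll variants (Lichnerowicz, Wei--Wylie, Fang--Li--Zhang) require bounded $f$ or bounded $|\nabla f|$; on a noncompact shrinker $Rc_f=\tfrac12 g>0$ but $|\nabla f|^2 = f-R$ is unbounded by Cao--Zhou's quadratic growth, which is exactly the obstructed regime, and your parenthetical about "compensating for the unboundedness of $f$ by the flow of $\nabla f$" is precisely where a new idea, not a refinement of known estimates, would be needed. (A smaller issue in Step~1: the strong maximum principle applied to $R$ gives $R>0$, not $R<f$ near infinity, so compactness of $\mathrm{Crit}(f)$ requires a separate scalar-curvature growth argument rather than being an immediate consequence of the maximum principle.)

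The paper itself does not prove Theorem~\ref{thm:oneend}; it cites Munteanu--Wang \cite{MW15}, whose argument is analytic rather than geometric and makes no use of lines or a splitting theorem. They work with the drift Laplacian $\Delta_f=\Delta-\nabla f\cdot\nabla$ and the associated theory of $f$-harmonic functions and $f$-parabolic/non-parabolic ends: the strict lower bound $Rc_f=\tfrac12 g$ together with an integrated weighted Bochner identity rules out the existence of more than one "large" end, and the K\"ahler structure then enters essentially through the identity $f_{ij}=0$ (the complex Hessian of the potential vanishes), which is what separates the K\"ahler case from the Riemannian one where the round cylinder $S^{n-1}\times\R$ really is a two-ended shrinker. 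Your route via lines and de~Rham splitting would be a more geometric proof if it could be run, and the K\"ahler step at the end of your Step~3 ($J\partial_s$ parallel $\Rightarrow$ extra $\R$-factor $\Rightarrow$ $N''\times\C$, which has one end) is correct once a splitting is granted; but the required shrinker-contains-a-line-implies-split theorem is not in the literature and cannot be obtained by the weighted Busemann route you indicate, so as written the argument is incomplete.
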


Next, for any Ricci shrinker, we have the following estimate of the integral of the scalar curvature; see \cite[Remark 3.1]{CZ10}.

\begin{lem}\label{lem:ave}
Let $(M^n,g,f)$ be a Ricci shrinker. Then for any $s>0$,
\begin{equation} \label{E501}
\aint_{\Sigma(0,s)} R \,dV_g := \frac{1}{|\Sigma(0,s)|}\int_{\Sigma(0,s)} R \,dV_g \le \frac{n}{2}.
\end{equation}
\end{lem}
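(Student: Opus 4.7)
The plan is to integrate the trace of the shrinker equation over the sublevel set $\{f\le s\}$ and apply the divergence theorem, exploiting the sign of $|\nabla f|$ on the boundary.

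First I would note that $\Sigma(0,s)=\{x\in M\mid f(x)\le s\}$: from the normalization $R+|\nabla f|^2=f$ and $R\ge 0$ one has $f\ge 0$, so the condition $0\le f\le s$ is equivalent to $f\le s$. Lemma~\ref{L201} gives the quadratic growth $f(x)\ge \tfrac14(d(x,p)-5n)_+^2$, so $\Sigma(0,s)$ is a (nonempty for $s$ large) compact set in $M$.

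Next, tracing \eqref{E100} yields the standard identity
\[
R+\Delta f=\frac{n}{2}.
\]
Integrating this over $\Sigma(0,s)$ gives
\[
\int_{\Sigma(0,s)} R\,dV_g = \frac{n}{2}\,|\Sigma(0,s)| - \int_{\Sigma(0,s)} \Delta f\,dV_g.
\]
For almost every $s>0$, Sard's theorem guarantees that $s$ is a regular value of $f$, so the level set $\Sigma(s)=\{f=s\}$ is a smooth hypersurface and forms the entire boundary of $\Sigma(0,s)$ (there is no inner boundary because $\min f$ is attained at an interior critical point $p$). On $\Sigma(s)$, the outward unit normal points in the direction of $\nabla f$, so the divergence theorem gives
\[
\int_{\Sigma(0,s)} \Delta f\,dV_g = \int_{\Sigma(s)} |\nabla f|\,dA \ge 0.
\]
Combining the two displays and dividing by $|\Sigma(0,s)|$ yields \eqref{E501} for almost every $s$, and the inequality then extends to all $s>0$ by continuity of both sides in $s$ (using the coarea formula, since the integrand is bounded on compact sublevel sets).

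I do not anticipate any real obstacle: the argument is a textbook application of the divergence theorem to the traced shrinker identity. The only points that require minor care are the compactness of $\Sigma(0,s)$ (handled by Lemma~\ref{L201}) and the possible presence of critical values of $f$ (handled by Sard plus continuity).
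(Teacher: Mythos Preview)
Your proposal is correct and follows exactly the same route as the paper: trace \eqref{E100} to get $R+\Delta f=\tfrac{n}{2}$, integrate over $\Sigma(0,s)$, apply the divergence theorem to turn $\int\Delta f$ into the nonnegative boundary term $\int_{\Sigma(s)}|\nabla f|\,d\sigma$, and handle non-regular $s$ by approximation. The only difference is that you spell out a few details (compactness via Lemma~\ref{L201}, Sard's theorem, absence of inner boundary) that the paper leaves implicit.
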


\begin{proof}
By taking the trace of \eqref{E100}, we have $R+\Delta f=n/2$. Integrating this identity on $\Sigma(0,s)$ and performing integration by parts, we obtain
\begin{align*}
\int_{\Sigma(0,s)} R \,dV=&\frac{n}{2}|\Sigma(0,s)|-\int_{\Sigma(0,s)} \Delta f \,dV \\
=&\frac{n}{2}|\Sigma(0,s)|-\int_{\Sigma(s)} |\na f| \,d\sigma \le \frac{n}{2}|\Sigma(0,s)|,
\end{align*}
if $s$ is a regular value of $f$. In general, \eqref{E501} follows by approximation.
\end{proof}

In addition, we have the following linear growth estimate of volume lower bound (cf. \cite[Theorem 1.6]{MW12} and \cite[Proposition 6]{LW20}). 
Notice that by Lemma \ref{L201}, $B(p,c_1\sqrt s) \subset \Sigma(0,s) \subset B(p,c_2\sqrt s)$ if $s \ge c_3$, for some positive constants $c_i=c_i(n)$ for $i=1,2,3$.

\begin{prop}\label{prop:volume}
For any noncompact Ricci shrinker $(M^n, g, f) \in \MM(A)$, there exist positive constants $s_1 = s_1(n)$ and $\ep_1$ =$\ep_1(n,A)$ such that for any $s \ge s_1$,
\begin{equation} \label{E502}
|\Sigma(0,s)| \ge \ep_1 \sqrt{s}.
\end{equation}
\end{prop}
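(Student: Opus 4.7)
The strategy is a Cauchy--Schwarz / differential-inequality argument for $V(s) := |\Sigma(0,s)|$, reduced to a pointwise lower bound on the $(n-1)$-area $|\Sigma(s)|$. Since $M$ is noncompact, Lemma \ref{L201} gives $\sup_M f = \infty$, so for a.e.\ $s$ large, $\Sigma(s)$ is a nonempty smooth compact hypersurface. By the coarea formula,
\begin{align*}
V(s) = \int_{f_{\min}}^{s} V'(t)\, dt, \qquad V'(t) = \int_{\Sigma(t)} \frac{d\sigma}{|\nabla f|}.
\end{align*}

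Next, integrating the trace identity $R + \Delta f = n/2$ over $\Sigma(0,s)$ and applying the divergence theorem gives
\begin{align*}
A(s) := \int_{\Sigma(s)} |\nabla f|\, d\sigma = \frac{n}{2} V(s) - \int_{\Sigma(0,s)} R\, dV \leq \frac{n}{2} V(s),
\end{align*}
where we used $R \geq 0$. Cauchy--Schwarz on $\Sigma(s)$ then yields
\begin{align*}
|\Sigma(s)|^2 = \left(\int_{\Sigma(s)} |\nabla f|^{1/2}\cdot |\nabla f|^{-1/2}\, d\sigma\right)^2 \leq A(s)\cdot V'(s) \leq \frac{n}{2}\, V(s)\, V'(s).
\end{align*}
Once a uniform lower bound $|\Sigma(s)| \geq \ep_0(n,A) > 0$ is in hand for $s \geq s_2(n)$, this gives $\frac{d}{ds}[V^2(s)] \geq \frac{4\ep_0^2}{n}$, and integrating yields $V(s) \geq \ep_1\sqrt{s}$ for $s \geq s_1 := 2s_2$ with $\ep_1 = \ep_0\sqrt{2/n}$.

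The main obstacle is therefore the pointwise area bound $|\Sigma(s)| \geq \ep_0$ for $s$ large. I would prove this by contradiction: if it fails, there is a sequence $(M_i^n, g_i, f_i) \in \MM(A)$, $s_i \to \infty$, and $q_i \in \Sigma_{g_i}(s_i)$ with $|\Sigma_{g_i}(s_i)| \to 0$. Rescale by $\tilde g_i := s_i^{-1} g_i$ and set $\tilde f_i := f_i - s_i$. By Theorem \ref{T203} and Remark \ref{rem:conv}, after passing to a subsequence,
\begin{align*}
(M_i,\tilde g_i,\tilde f_i, q_i) \longright{\text{pointed-}\hat C^{\infty}\text{-CG}} (Y_\infty, g_\infty, f_\infty, q_\infty),
\end{align*}
a steady soliton conifold on which $R_\infty + |\nabla f_\infty|^2 = 1$ and $f_\infty(q_\infty)=0$. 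If one can arrange $q_\infty$ to be a regular point of $f_\infty$, then $\{f_\infty = 0\}$ is a smooth hypersurface near $q_\infty$ with a definite local $(n-1)$-volume, and smooth convergence on the regular part forces $|\Sigma_{g_i}(s_i)| \geq c\, s_i^{(n-1)/2} \to \infty$, a contradiction.

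The delicate point is the selection of $q_i$ so that the limit $q_\infty$ is regular, i.e., $R(q_i)/s_i \to 0$ (equivalently $|\nabla_{\tilde g_i}\tilde f_i|^2(q_i) \to 1$). A purely pointwise choice may fail because $R$ could be comparable to $f$ on all of $\Sigma_{g_i}(s_i)$. I would handle this using the volume-average bound $\aint_{\Sigma(0,s)} R \leq n/2$ from Lemma \ref{lem:ave} to find a set of positive measure in a neighborhood of $\Sigma(s_i)$ where $R$ is bounded by a constant, then flow such points along $\nabla f/|\nabla f|^2$ a short parameter distance to land on $\Sigma(s_i')$ for some $s_i'$ near $s_i$; this produces the required base points, and the same contradiction applies after relabeling $s_i \leadsto s_i'$. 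This selection argument is the one nontrivial ingredient; the rest of the proof is the Cauchy--Schwarz ODE above.
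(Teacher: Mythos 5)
The coarea/Cauchy--Schwarz reduction at the start of your argument is correct: the identity $\int_{\Sigma(s)}|\nabla f|\,d\sigma = \frac n2 V(s) - \int_{\Sigma(0,s)}R\,dV \le \frac n2 V(s)$, the inequality $|\Sigma(s)|^2 \le A(s)V'(s)$, and the resulting differential inequality $\frac{d}{ds}V^2 \ge \frac{4}{n}|\Sigma(s)|^2$ are all fine, and a uniform lower bound $|\Sigma(s)|\ge\ep_0$ would indeed yield $V(s)\ge\ep_1\sqrt s$. The problem is that your compactness argument cannot deliver that pointwise area bound, and the gap is a matter of scaling, not of base-point selection.

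Concretely: the rescaling in Theorem \ref{T203} and Remark \ref{rem:conv} is $\tilde g_i = s_i\, g_i$ (not $s_i^{-1} g_i$ as you wrote; with $s_i^{-1}g_i$ the normalization $R+|\nabla f|^2 = 1 + \tilde f/s_i$ fails and the compactness theorem does not apply). With the correct rescaling, a hypersurface has $|\Sigma(s_i)|_{\tilde g_i} = s_i^{(n-1)/2}|\Sigma(s_i)|_{g_i}$. Smooth convergence near a regular point $q_\infty$ gives a definite lower bound on the rescaled area of $\Sigma(s_i)$ inside a fixed $\tilde g_i$-ball around $q_i$, say $\ge c>0$, but translating back this only yields $|\Sigma(s_i)|_{g_i}\ge c\,s_i^{-(n-1)/2}$, which tends to $0$ and is therefore fully consistent with your contradiction hypothesis $|\Sigma(s_i)|_{g_i}\to 0$. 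Your stated conclusion $|\Sigma_{g_i}(s_i)|\ge c\,s_i^{(n-1)/2}$ has the sign in the exponent reversed and does not follow. The underlying issue is that compactness at the blown-up scale $\sim s_i^{-1/2}$ is inherently scale-invariant: it only sees a metric ball of $g_i$-radius $\sim s_i^{-1/2}$ around $q_i$, whereas $\Sigma(s_i)$ has $g_i$-diameter comparable to $\sqrt{s_i}$, so no bounded piece of the rescaled limit can control the full area $|\Sigma(s_i)|_{g_i}$ from below by a constant. No choice of base points $q_i$ fixes this.

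For the record, the paper does not prove this proposition; it is cited from Munteanu--Wang~\cite[Theorem 1.6]{MW12} (linear volume growth $|B(p,r)|\ge cr$) together with \cite[Proposition 6]{LW20}. Those proofs are global comparison/ODE arguments built directly on the identity $sV'(s)-\chi'(s)=\frac n2 V(s)-\chi(s)$ (with $\chi(s)=\int_{\Sigma(0,s)}R$), obtained from $|\nabla f|^2=f-R$ and Stokes, not on compactness. If you want a self-contained proof along the lines you sketched, you would need to replace the compactness step with such a global integral identity; your Cauchy--Schwarz observation is in fact close in spirit to the right argument, but the missing lower bound has to come from the $\chi$--$V$ coupled ODE rather than from the blown-up local geometry.
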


Combining \eqref{E501} and \eqref{E502}, we know that on any noncompact Ricci shrinker, the scalar curvature cannot exceed $n$ everywhere outside a compact set.

We proceed to prove Theorem \ref{T102}, which we restate here.

\begin{thm}\label{thm:bound}
Any noncompact K\"ahler Ricci shrinker surface $(M^2,g,J,f)$ has bounded scalar curvature.
\end{thm}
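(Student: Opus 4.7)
The plan is to argue by contradiction: assume $\sup_M R = +\infty$ on some noncompact K\"ahler Ricci shrinker surface, and derive a contradiction by tracking $R$ along the flow $\phi^s$ generated by $\nabla f/|\nabla f|^2$. This flow satisfies $f(\phi^s(x)) = f(x) + s$, so it carries level sets to level sets. By Theorem~\ref{thm:oneend} $M$ has a single end, and the quadratic growth of Lemma~\ref{L201} (combined with $|\nabla f|^2 = f - R$) ensures that for $s_0$ large the end is foliated by the level sets $\Sigma(s)$, $s \ge s_0$. Fix a connected component $\Sigma$ of $\Sigma(s_0)$; its forward orbit $\bigcup_{s\ge 0}\phi^s(\Sigma)$ exhausts the end.

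The argument proceeds via a dichotomy with a small parameter $\eta = \eta(A) > 0$ on $\Sigma$: either \textbf{(I)} some $x_0 \in \Sigma$ has $R(x_0) \le 1 - \eta$, or \textbf{(II)} some $x_0 \in \Sigma$ has $R(x_0) \ge 1 + \eta$, or neither, in which case $R$ is already sandwiched in $[1-\eta, 1+\eta]$ on $\Sigma$ and the dichotomy is repeated on each $\phi^s(\Sigma)$. In case~(I), using the evolution $\tfrac{d}{ds} R(\phi^s(x)) = 2\,Rc(\nabla f,\nabla f)/|\nabla f|^2$ (from differentiating $R + |\nabla f|^2 = f$ and applying the shrinker equation) together with Theorem~\ref{T101}(B), I would show that $R \le 1$ propagates along $\phi^s$ to the entire forward orbit. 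Intuitively, once $R$ is bounded and $f$ is large, part (B) forces the local geometry to be smoothly close to the rigid cylinder $(\CP^1\times\C, g_c, J_c)$ on which $R \equiv 1$, and this rigidity traps $R$ strictly below $1$ once it has dipped there; hence $R \le 1$ on the end and $R$ is bounded globally, contradicting the assumption.

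In case~(II), I plan to show that the lower bound $R(x_0) \ge 1 + \eta$ propagates along $\phi^s$ to the much stronger inequality $R(x) \ge \delta f(x)$ on $\Sigma(s_1,\infty)$ for $s_1$ large and $\delta = \delta(A,\eta) > 0$, again combining the ODE for $R$ along flow lines with Theorem~\ref{T101}(B) to prevent trajectories from dropping back into the moderate regime. Once established, Theorem~\ref{T101}(A) upgrades this pointwise bound to the neighborhood estimate $R(y) \ge L^{-1} f(x)$ on $B(x,\epsilon^{-1} R^{-1/2}(x))$, so $R \gtrsim s$ on a definite fraction of $\Sigma(0,s)$ for $s \gg 1$. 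Coupling this with the volume bound $|\Sigma(0,s)| \ge \epsilon_1 \sqrt{s}$ from Proposition~\ref{prop:volume} yields $\int_{\Sigma(0,s)} R\,dV \gtrsim s^{3/2}$; hence $\aint_{\Sigma(0,s)} R\,dV \to \infty$, contradicting the uniform bound $\aint_{\Sigma(0,s)} R\,dV \le n/2 = 2$ of Lemma~\ref{lem:ave}.

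The main obstacle will be the propagation step in both cases: extending a pointwise gap $R \le 1-\eta$ or $R \ge 1+\eta$ on a single level set to the entire forward $\phi^s$-orbit. The ODE $\tfrac{d}{ds} R(\phi^s(x)) = 2\,Rc(\nabla f,\nabla f)/|\nabla f|^2$ has no a priori sign, so a naive maximum principle argument does not close; one must exploit Theorem~\ref{T101}(B) quantitatively to trap trajectories on one side of $R=1$ and, in case~(II), to quantitatively push $R$ upward through the intermediate regime into the range where (A) applies. The K\"ahler hypothesis enters in two essential ways: through the rigidity of the cylinder $(\CP^1\times\C, g_c, J_c)$ as the sole model in (B), and through the Killing vector field $J\nabla f$, whose $S^1$-action on level sets of $f$ imposes strong symmetry on the scalar curvature profile along $\phi^s$-orbits.
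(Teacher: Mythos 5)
Your overall strategy mirrors the paper's: a dichotomy on level sets of $f$, propagation of the scalar curvature regime along $\phi^s$ via Theorem~\ref{T101}, and an integral contradiction against Lemma~\ref{lem:ave} plus Proposition~\ref{prop:volume}. That much is right, and you correctly flag that the propagation step is where the content lies. But there are two places where your plan, as written, would not close.

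\textbf{First, your dichotomy is not exhaustive in the useful sense.} You split on whether $\Sigma$ contains a point with $R\le 1-\eta$ or a point with $R\ge 1+\eta$. Both could occur simultaneously, and nothing you wrote rules this out. The paper's way around this is the key rigidity you half-identify but do not use: if some point $y$ on a connected level set $\Sigma$ has $R(y)$ near $1$ (so $y$ lies in the ``cylinder regime'' $B$), then the holomorphic Killing field $J\nabla f$ is tangent to $\Sigma$ and preserves $g$, so pushing $y$ along its integral curves transports the $\ep$-closeness to $\CP^1\times\C$ around the \emph{entire} level set, patching up an $S^2$-fibration and forcing $R$ to be near $1$ everywhere on $\Sigma$. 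This is what collapses the a priori messy pointwise alternatives into the clean component-level cases (i)--(iv) of the paper. Without this step, ``$R \le 1-\eta$ at one point'' does not propagate across $\Sigma$, let alone along the forward orbit.

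\textbf{Second, your propagation ODE is harder to use than you suggest, and the paper avoids it.} Your identity $\frac{d}{ds}R(\phi^s(x)) = 2Rc(\nabla f,\nabla f)/|\nabla f|^2$ is correct, but on the cylinder model $Rc(\nabla f,\nabla f)\approx 0$ in the $\C$-direction, so the right-hand side is small and of no definite sign; it does not directly trap $R$ on one side of $1$. The paper instead converts to the associated Ricci flow, uses $\partial_t R = \Delta_t R + 2|Rc|^2$, and observes that $\ep$-closeness to $\CP^1\times\C$ pins $\Delta_t R\approx 0$ and $|Rc|^2\approx R^2/2$, giving the two-sided bound $(1-\eta_\ep)R^2 \le \partial_t R \le (1+\eta_\ep)R^2$ (its \eqref{E503a}). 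Integrating this and translating back via $R(x,t)=(1-t)^{-1}R(x^t)$ gives the clean quantitative Claims 1--3: a point with $R>1$ is driven in finite flow-time into regime $A$ (Claim 1), a point with $R<1$ never crosses $1$ (Claim 2), and once in $A$ the trajectory stays pinned with $R\gtrsim \bar\sigma f$ (Claim 3). These are the statements you would need to prove; the maximum-principle-free ODE you wrote down is not by itself strong enough. You also assert that the forward orbit of a single connected component of $\Sigma(s_0)$ exhausts the end; that is not automatic (components can degenerate to critical points of $f$), and the paper handles this by treating each component separately and invoking one-endedness only to conclude.
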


\begin{proof} 
We assume $(M,g,f) \in \MM( \bar A)$ for some constant $ \bar A>0$. We fix a small constant $\ep>0$ to be determined later, and set 
\begin{align}
\bar \sigma:=\min\{\sigma(\ep, \bar A),1/1000\}, 
\quad \bar L:=\max\{L(\ep, \bar \sigma/2, \bar A),1000 \bar \sigma^{-1}\} , \label{eqn:barL}
\end{align}
where $\sigma$ and $L$ are functions in Theorem \ref{T101}.

Now, we define the following subsets of $\Sigma(\bar L,\infty)$:
\begin{align*}
\begin{cases}
&A \coloneqq \left \{x\in \Sigma(\bar L,\infty) \mid R(x) \ge \bar \sigma f(x)/2 \right \}, \\
&B \coloneqq \left\{x\in \Sigma(\bar L,\infty) \mid R(x) < \bar \sigma f(x)/2 \quad \text{and} \quad R^{-1}(x)<\bar \sigma f(x)/2 \right\},\\
&C \coloneqq \left\{x\in \Sigma(\bar L,\infty) \mid R^{-1}(x) \ge \bar \sigma f(x)/2\right \}.
\end{cases}
\end{align*}
It is clear that $\Sigma(\bar L,\infty)=A\sqcup B \sqcup C$. Moreover, it follows from Theorem \ref{T101} that for any $x \in A$, $(M,g,x)$ is $\ep$-close to a pointed K\"ahler steady soliton orbifold. On the other hand, for any $x \in B$, $(M,g,J,x)$ is $\ep$-close to $( \CP^1 \times \C ,g_c(t),J_c) $.

For any $x \in M$, we set $x^t:=\psi^t(x)$ for $t<1$, where $\psi^t$ is the family of diffeomorphisms in \eqref{E201a}. We fix a small constant $0<\delta \ll 1$ and let $\ep$ depend on it. Moreover, we set $\eta_{\ep}$ to be a positive function such that $\eta_{\ep} \to 0$ if $\ep \to 0$. In the following, $\eta_{\ep}$ may be different line by line.

\textbf{Claim 1}: There exists a constant $\bar t=\bar t(\delta) \in (0,1)$ such that for any $x \in B$ with $R(x) \ge 1+\delta$, there exists a constant $t_1 \le \bar t$ such that $x^t \in B$ for any $t \in [0,t_1)$ and $x^{t_1} \in A$.

\emph{Proof of Claim 1}: Since $B$ is an open set, there exists a largest constant $t_1 \in (0,1]$ such that $x^t \in B$ for any $t \in [0,t_1)$. By our definition, $(M,g,J,x^t)$ is $\ep$-close to $( \CP^1 \times \C ,g_c(t),J_c) $ for any $t \in [0,t_1)$. Since the Ricci flow associated with the Ricci shrinker is self-similar, we conclude that $(M,g(t),J,(x,t))$ is also $\ep$-close to $( \CP^1 \times \C ,g_c(t),J_c) $ for any $t \in [0,t_1)$.

Therefore, the evolution equation of the scalar curvature satisfies
\begin{equation} \label{E503a}
(1-\eta_{\ep}) R^2\le \partial_t R=\Delta_t R+2|\Rc|^2 \le (1+\eta_{\ep}) R^2.
\end{equation}
for any $(x,t)$ with $t \in [0,t_1)$. Indeed, \eqref{E503a} holds, as $\partial_t R/R^2$ is invariant under parabolic scaling, and the model space satisfies \eqref{eq:scalar}. In particular, we have
\begin{align*}
1-\eta_{\ep} \leq \partial_t \left(-\frac{1}{R} \right) \leq 1+\eta_{\ep}.
\end{align*}
Integrating the above inequality over time, we conclude that if $\ep$ is sufficiently small,
\begin{equation} \label{E503b}
R(x,t) \ge \frac{R(x)}{1-(1-\eta_{\ep})R(x)t}\ge \frac{1+\delta}{1-(1-\eta_{\ep})(1+\delta)t},
\end{equation}
for any $t \in [0,t_1)$. From \eqref{E503b}, it is clear that $t_1<1/(1-\eta_{\ep})(1+\delta)<1/(1+\delta/2):= \bar t$ for sufficiently small $\ep$. In addition, since $R(x,t)=(1-t)^{-1}R(x^t)$ and $R(x) \geq 1+\delta \geq (1-\eta_{\ep})^{-1}$ for $\ep$ sufficiently small, we obtain from \eqref{E503b} that
\begin{equation} \label{E503c}
R(x^t) \ge \frac{(1-t)R(x)}{1-(1-\eta_{\ep})R(x)t} \ge R(x)
\end{equation}
for any $t \in [0,t_1)$. 
As $f(x^t)$ is increasing, it follows from \eqref{E503c} that for any $t \in [0,t_1)$,
\begin{equation*}
R(x^t) f(x^t) \ge R(x)f(x) >2\bar{\sigma}^{-1},
\end{equation*}
where the last inequality holds because $x \in B$. However, by the definition of $t_1$, $x^{t_1}$ is on the boundary of $B$.
Therefore, either $R(x^{t_1})=\bar \sigma f(x^{t_1})/2$, or $R(x^{t_1})f(x^{t_1})=2\bar{\sigma}^{-1}$, which is excluded by the above inequality. 
Consequently, $R(x^{t_1})=\bar \sigma f(x^{t_1})/2$ and $x^{t_1} \in A$. The proof of Claim 1 is complete. 

\textbf{Claim 2}: For any $x \in \Sigma(\bar L,\infty)$ with $R(x) < 1-\delta$, we have $R(x^t) < 1-\delta$ for any $t \in [0,1)$.

\emph{Proof of Claim 2}: Suppose the claim fails. There exists a constant $t_2 \in (0,1)$ such that $R(x^t)<1-\delta$ for any $t \in [0,t_2)$ and $R(y)=1-\delta$ for $y:=x^{t_2}$.

Note that $y \in B$. Actually, if $y \in A$, then by the choice of $\bar{L}$ in (\ref{eqn:barL}), we have $1-\delta=R(y) \ge \bar \sigma \bar{L}/2 \ge 500$, which is absurd. 
Similarly, if $y\in C$, then $1/2<1-\delta=R(y)<2/\bar{\sigma} \bar{L}<1/1000$, which is impossible.
Thus $y \in B$ and we can apply canonical neighborhood theorem and \eqref{E503a} to obtain
\begin{equation} \label{E504a}
R(y,t) \ge \frac{R(y)}{1-(1+\eta_{\ep})R(y)t} = \frac{1-\delta}{1-(1+\eta_{\ep})(1-\delta)t}
\end{equation}
for any $t \in (-\ep^{-1} R^{-1}(y),0]$. Therefore, it follows from \eqref{E504a} that
\begin{equation*}
R(y^t) \ge \frac{(1-t)(1-\delta)}{1-(1+\eta_{\ep})(1-\delta)t} > 1-\delta
\end{equation*}
for any $t \in (-\ep^{-1} R^{-1}(y),0)$, if $\ep$ is sufficiently small. However, this contradicts the fact that $R(x^t)<1-\delta$ for any $t \in [0,t_2)$. Therefore, Claim 2 is proved.

\textbf{Claim 3}: For any $x \in A$, we have $R(x^t) \ge \bar \sigma f(x^t)/4$ for any $t \in [0,1)$.

\emph{Proof of Claim 3}: Suppose otherwise, there exists a constant $t_3 \in (0,1)$ such that $R(x^t) > \bar \sigma f(x^t)/4$ for any $t \in [0,t_3)$ and $R(y) = \bar \sigma f(y)/4$, where $y:=x^{t_3}$. In particular, $y \in B$ by our choice of $\bar L$ and hence \eqref{E503a} holds for any $(y,t)$ for $t \in (-\ep^{-1} R^{-1}(y),0]$. Therefore, we conclude
\begin{equation} \label{E505a}
R(y,t) \le \frac{R(y)}{1-(1-\eta_{\ep})R(y)t}
\end{equation}
for any $t \in (-\ep^{-1} R^{-1}(y),0]$. We consider the following two cases.

(a). $R^{-1}(y) < t_4:=\dfrac{t_3}{1-t_3}$.

In this case, $y^{-R^{-1}(y)}=x^{t'}$ for some $t' \in (0,t_3)$. Indeed, this follows from the fact that
\begin{equation*} 
\psi^s(\psi^t(z))=\psi^t(\psi^s(z))=\psi^{s+t-st}(z)
\end{equation*}
for any $z\in M$ and $s,t <1$, by the definition \eqref{E201a}. Moreover, we compute from
\begin{equation*}
\frac{d}{dt}f(x^t)=\frac{|\na f|^2}{1-t}(x^t) \le \frac{f(x^t)}{1-t}
\end{equation*}
that 
\begin{equation} \label{E505e}
f(x^{t'})=f(y^{-R^{-1}(y)}) \ge \frac{f(y)}{1+R^{-1}(y)}.
\end{equation}
In addition,
\begin{equation} \label{E505ex}
R(x^{t'})=R(y^{-R^{-1}(y)})=(1+R^{-1}(y)) R(y, -R^{-1}(y)).
\end{equation}

Combining \eqref{E505a}, \eqref{E505e} and \eqref{E505ex}, we conclude
\begin{equation} \label{E505f}
\frac{R(x^{t'})}{f(x^{t'})} \le \frac{(1+R^{-1}(y))^2}{2-\eta_{\ep}} \frac{R(y)}{f(y)} = \frac{\bar \sigma (1+R^{-1}(y))^2}{4(2-\eta_{\ep})}.
\end{equation}
Since $R^{-1}(y) \ll 1$ by our choice, \eqref{E505f} contradicts the fact that $R(x^{t'}) > \bar \sigma f(x^{t'})/4$.

(b). $R^{-1}(y) \ge t_4$.

In this case, $y^{-t_4}=x$ and similar to \eqref{E505f} we have
\begin{equation} \label{E505g}
\frac{R(x)}{f(x)} \le \frac{(1+t_4)^2}{1+(1-\eta_{\ep})R(y)t_4} \frac{R(y)}{f(y)} \le \frac{\bar \sigma (1+t_4)^2}{4}.
\end{equation}
Since $t_4 \le R^{-1}(y) \ll 1$, \eqref{E505g} contradicts the fact that $R(x) \ge \bar \sigma f(x)/2$. Therefore, we have proved Claim 3.

From Sard's theorem, there exists $s_0 > \bar L$ such that $\Sigma(s_0)$ is a smooth hypersurface. We assume
\begin{equation} \label{eq:comp}
\Sigma(s_0)=\Sigma_1 \sqcup \cdots \sqcup \Sigma_k,
\end{equation}
where each $\Sigma_i$ is a connected component of $\Sigma(s_0)$ for $1 \le i \le k$. In addition, we set $\phi^s$ to be a family of diffeomorphisms generated by $\na f/|\na f|^2$ such that $\phi^{s_0}=\text{id}$. 
Clearly, $\phi^s$ is just a reparametrization of $\psi^t$. Here, if $\phi^s(z)$ converges to a critical point $z_0$ of $f$ as $s \nearrow \bar s$ (or $s \searrow \bar s$), we define $\phi^s(z)=z_0$ for any $s \ge \bar s$ (or $s \le \bar s$). Moreover, we set $\Sigma_i^s:=\phi^s(\Sigma_i)$. 

For each $i$, we consider the following four cases.

\textbf{Case (i)}: There exists a point $x \in \Sigma_i$ with $R(x) \le 1-2\delta$.

In this case, we claim that $R(z) < 1- \delta$ for all $z \in \Sigma_i$. Suppose otherwise. There exists a point $y \in \Sigma_i$ with $R(y)=1-\delta$. For the same reason as in the proof of Claim 2 above, $y \in B$. 
From Theorem \ref{T101}(B), $(M,g,J,y)$ is $\ep$-close to $( \CP^1 \times \C ,g_c(t),J_c)$. Moreover, it follows from the proof of Theorem \ref{T101}(B) that $\{\na f,J(\na f) \}$ almost forms the splitting factor $\C$. Therefore, locally around $y$, $\Sigma_i$ is almost isometric to the standard $S^2 \times \R$.
In other words, $(\Sigma_i,g\vert_{\Sigma_i},y)$ is $\eta_{\ep}$-close to $S^2 \times \R$. Next, we denote the integral curve of $J(\na f)$ on $\Sigma_i$ by $\gamma(t)$ such that $\gamma(0)=y$. 
Since $J(\na f)$ is a Killing field, we conclude that $(\Sigma_i,g\vert_{\Sigma_i}, \gamma(t))$ is $\eta_{\ep}$-close to $S^2 \times \R$ for all $t \in \R$. 
Argued as in the proof of Theorem \ref{thm:canB} Claim 3, we patch all parts together to conclude that $\Sigma_i$ is diffeomorphic to $S^2 \times S^1$.

Moreover, $(\Sigma_i, g\vert_{\Sigma_i},z)$ is $\eta_{\ep}$-close to $S^2 \times \R$, for any point $z\in \Sigma_i$. Therefore, $R(z)>1-2\delta$ for any $z \in \Sigma_i$, if $\ep$ is sufficiently small. However, we obtain a contradiction by the existence of $x$.

It follows from Claim 2 that $R(z) <1-\delta$ for any $z \in \Sigma_i^s$ and $s \ge s_0$. Since $R+|\na f|^2=f$, $\Sigma_i^s$ contains no critical point of $f$ for any $s \ge s_0$. Thus, $\Sigma_i^s$ is diffeomorphic to $\Sigma_i$ and $\bigcup_{s \ge s_0} \Sigma_i^s$ forms an end of $M$. 
Since $(M,g)$ has only one end by Theorem \ref{thm:oneend}, we immediately conclude that $(M,g)$ has bounded scalar curvature. 

\textbf{Case (ii)}: For any $s \ge s_0$ and $z \in \Sigma_i^s$, $1-2\delta <R(z)<1+2\delta$.

In this case, it is evident that $\bigcup_{s \ge s_0} \Sigma_i^s$ forms an end on which the scalar curvature is bounded. 

\textbf{Case (iii)}: There exist a constant $s_{1,i}>s_0$ and a point $x \in \Sigma_i^{s_{1,i}}$ such that $1-2\delta <R(z)<1+2\delta$ for any $s \in [s_0,s_{1,i})$ and $z \in \Sigma_i^s$, and $R(x)=1+2\delta$.

In this case, $x \in B$ and hence by the same reason as in Case (i), $\Sigma^{s_{1,i}}_i$ is diffeomorphic to $S^2 \times S^1$ or $S^2 \times_{\Z_2} S^1$ such that $(\Sigma^{s_{1,i}}_i, g\vert_{\Sigma^{s_{1,i}}_i},z)$ is $\eta_{\ep}$-close to $S^2 \times \R$, for any point $z\in \Sigma^{s_{1,i}}_i$. 
Thus $R(z) \ge 1+\delta$ for any $z \in \Sigma^{s_{1,i}}_i$. Therefore, it follows from Claim 1 and Claim 3 that there exists a constant $s_{2,i}>s_{1,i}$ such that $R(\phi^s(z)) \ge \bar \sigma f(\phi^s(z))/4$ for any $z \in \Sigma_i$ and $s \ge s_{2,i}$.
Indeed, for any $z\in \Sigma^{s_{1,i}}_i$, it follows from Claim 1 that there exists a $t_z \in (0,\bar t]$ such that $z^t \in B$ for $t \in [0,t_z)$ and $z^{t_z} \in A$. We compute from
\begin{equation*}
\frac{d}{dt}f(z^t)=\frac{|\na f|^2}{1-t}(z^t) \le \frac{f(z^t)}{1-t}
\end{equation*}
that 
\begin{equation} \label{E506xa}
f(z^{t_z}) \le \frac{f(z)}{1-t_z} \le \frac{s_{1,i}}{1-\bar t}.
\end{equation}

Combining \eqref{E506xa} with Claim 3, we can choose $s_{2,i}:=\frac{s_{1,i}}{1-\bar t}$. With this choice, for any $z \in \Sigma_i$ and $s \ge s_{2,i}$, $\phi^s(z)=w^{t}$ for some $w \in \Sigma^{s_{1,i}}_i$ and $t \ge t_w$. Thus, we have $R(\phi^s(z)) \ge \bar \sigma f(\phi^s(z))/4$ for any $z \in \Sigma_i$ and $s \ge s_{2,i}$.

\textbf{Case (iv)}: There exists a point $x \in \Sigma_i$ such that $R(x) \ge 1+2\delta$.

In this case, similar to Case (i), we conclude that $R(z) \ge 1+\delta$ for any $z \in \Sigma_i$. For the same reason as in Case (iii), there exists a constant $s_{3,i}>s_0$ such that $R(\phi^s(z)) \ge \bar \sigma f(\phi^s(z))/4$ for any $z \in \Sigma_i$ and $s \ge s_{3,i}$.

We are ready to finish the proof according to the previous classification.
We prove this by contradiction. Suppose $M$ has unbounded scalar curvature. Namely, we assume 
\begin{align}
\sup_{M} R=\infty. \label{eqn:scalarinf}
\end{align}
Then Case (i) and Case (ii) will not appear, as in these cases, $M$ has an end with bounded scalar curvature. 
Since $M$ has only one end by Theorem~\ref{thm:oneend}, we see that $M$ has bounded scalar curvature, which contradicts~\ref{eqn:scalarinf}. 
Therefore, Case (iii) or Case (iv) above applies to each component $\Sigma_i$ for $1 \le i \le k$. By the analysis above, we conclude that there exists $\bar s:=\max_{1 \le i \le k}\{ s_{2,i},s_{3,i}\}$ such that
\begin{equation} \label{E507}
R(w) \ge \frac{\bar \sigma f(w)}{4}
\end{equation}
if $w=\phi^s(z)$ for some $z \in \Sigma(s_0)$ and $s \ge \bar s$. 
All of such $w$ form a subset of $\Sigma(\bar s,\infty)$. Although this subset may not equal $\Sigma(\bar s,\infty)$, 
we claim that \eqref{E507} still holds for any $w \in \Sigma(\bar s,\infty)$; see Figure \ref{fig3} below.

\begin{figure}[H]
\centering
\includegraphics[scale=0.33]{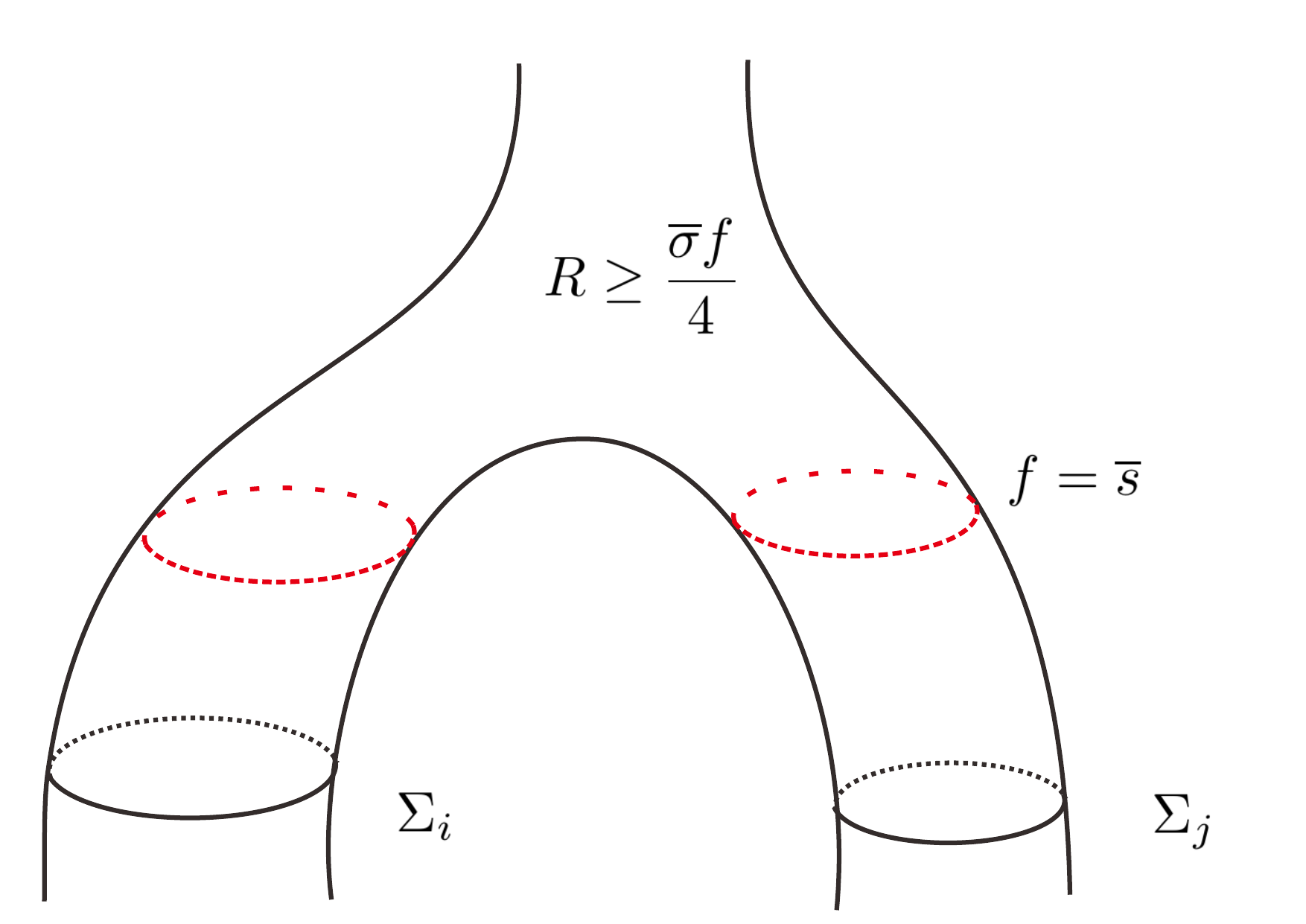}
\vspace*{+5mm}
\caption{Case (iii) or Case (iv) holds for all components}
\label{fig3}
\end{figure}

Indeed, for any $w \in \Sigma(\bar s,\infty) $, we consider $w_s:=\phi^{s-s_w+s_0}(w)$, where $s_w:=f(w) \ge \bar s$. If $w_s$ does not hit the critical point of $f$ for $s \in [s_0,s_w]$, then it is clear that $w=\phi^{s_w}(z)$ for some $z \in \Sigma(s_0)$ and hence \eqref{E507} holds. 
On the other hand, if $w_{s'}$ is a critical point of $f$ for some $s' \in [s_0,s_w]$, then \eqref{E507} also holds for $w$ by Claim 3 since $R(w_{s'})=f(w_{s'})$. 
Therefore, \eqref{E507} holds for any $w \in \Sigma(\bar s,\infty) $. 
Choose $\theta> \bar{s}$, it follows from \eqref{E507} that
\begin{align*}
\int_{\Sigma(0,\theta)} R \,dV \geq \int_{\Sigma(\bar{s},\theta)} R \,dV \geq \frac{\bar{\sigma}}{4} \bar{s} \cdot |\Sigma(\bar{s},\theta)|= \frac{\bar{\sigma}}{4} \bar{s} \cdot \left\{ |\Sigma(0,\theta)| -|\Sigma(0,\bar{s})| \right\}. 
\end{align*}
Recall that $\bar{s}>s_0>\bar{L}$ by our choice and $\bar{L}$ is sufficiently large by its definition (\ref{eqn:barL}). 
Taking the average of the above inequality and applying the volume linear growth estimate \eqref{E502}, we obtain
\begin{align*}
\liminf_{\theta \to \infty} \aint_{\Sigma(0,\theta)} R \,dV \geq \frac{\bar{\sigma} \bar{s}}{4} \geq \frac{\bar{\sigma} \bar{L}}{4} \gg 2=\frac{n}{2}, 
\end{align*}
which contradicts \eqref{E501}.
This contradiction indicates that the assumption (\ref{eqn:scalarinf}) is wrong. 
Therefore, we have proved that $(M,g)$ has bounded scalar curvature.

\end{proof}

\begin{rem}
As pointed out by the referee, the decomposition in \eqref{eq:comp} has only one connected component. This holds because, for any K\"ahler Ricci shrinker, the potential function $f$ is a Morse-Bott function with even indices (see \cite{Fra59}), implying that each of its level sets is connected. Therefore, the proof can be simplified somewhat. Nevertheless, we believe our argument may still be useful for general real Ricci shrinkers.
\end{rem}

\newpage

\appendixpage
\addappheadtotoc
\appendix
\section{Metric flows and $\mathbb F$-convergence} 
\label{app:A}

In this appendix, we state some theorems regarding the metric flows and $\IF$-convergence. The theory of $\IF$-convergence was originally established by Bamler for compact Ricci flows \cite{Bam20a, Bam20b, Bam20c}. 
In~\cite{LW23}, the theory is generalized by the authors to Ricci flows induced by Ricci shrinkers. For more detailed discussions, see~\cite[Section 6]{LW23}.

We first recall the following definition of the metric flow from~\cite[Definition 3.1]{Bam20b}.

\begin{defn}[Metric flow] \label{def:mf}
Let $I \subset \R$ be a subset.
A metric flow over $I$ is a tuple of the form 
\begin{equation*}
(\XX , \tf, (d_t)_{t \in I} , (v_{x;s})_{x \in \XX, s \in I, s \leq \tf (x)}) 
\end{equation*}
with the following properties:
\begin{enumerate}[label=(\arabic*)]
\item $\XX$ is a set consisting of points.

\item $\tf : \XX \to I$ is a map called time-function.
Its level sets $\XX_t := \tf^{-1} (t)$ are called time-slices and the preimages $\XX_{I'} := \tf^{-1} (I')$, $I' \subset I$, are called time-slabs.
\item $(\XX_t, d_t)$ is a complete and separable metric space for all $t \in I$.

\item $v_{x;s}$ is a probability measure on $\XX_s$ for all $x \in \XX$, $s \in I$, $s \leq \tf (x)$. For any $x \in \XX$ the family $(v_{x;s})_{s \in I, s \leq \tf (x)}$ is called the conjugate heat kernel at $x$.

\item $v_{x; \tf (x)} = \delta_x$ for all $x \in \XX$.

\item For all $s, t \in I$, $s< t$, $T \geq 0$ and any measurable function $u_s : \XX_s \to [0,1]$ with the property that if $T > 0$, 
then $u_s = \Phi \circ f_s$ for some $T^{-1/2}$-Lipschitz function $f_s : \XX_s \to \R$ \emph{(}if $T=0$, then there is no additional assumption on $u_s$\emph{)}, the following is true. 
The function
\begin{equation*}
u_t :\XX_t \longrightarrow \R, \qquad x \longmapsto \int_{\XX_s} u_s \, dv_{x;s} 
\end{equation*}
is either constant or of the form $u_t = \Phi \circ f_t$, where $f_t : \XX_t \to \R$ is $(t-s+T)^{-1/2}$-Lipschitz. Here, $\Phi$ is given by
\begin{equation*}
\Phi(x) = \int_{-\infty}^x (4\pi)^{-1/2} e^{-t^2/4} \,dt.
\end{equation*}

\item For any $t_1,t_2,t_3 \in I$, $t_1 \leq t_2 \leq t_3$, $x \in \XX_{t_3}$ we have the reproduction formula
\[ v_{x; t_1} = \int_{\XX_{t_2}} v_{\cdot; t_1} dv_{x; t_2}, \]
meaning that for any Borel set $S\subset \XX_{t_1}$
\[ v_{x;t_1} (S) = \int_{\XX_{t_2}} v_{y ; t_1} (S) dv_{x; t_2}(y). \]
\end{enumerate}
\end{defn}

Given a metric flow $\XX$ over $I$, we recall the following definition of the conjugate heat flow from \cite[Definition 3.13]{Bam20b}.

\begin{defn}[Conjugate heat flow] 
A family of probability measures $(\mu_t \in \mathcal{P} (\XX_t))_{t \in I'}$ over $I' \subset I$ is called a conjugate heat flow if for all $s, t \in I'$, $s \leq t$ we have
\begin{equation*}
\mu_s = \int_{\XX_t} \nu_{x;s} \, d\mu_t (x). 
\end{equation*}
\end{defn}

In particular, the conjugate heat kernel $(v_{x;s})_{s \in I, s \leq \tf (x)}$ is a conjugate heat flow over $I'=I \cap (-\infty, \tf(x))$.

\begin{defn}[$H$-Concentration] 
Given a constant $H>0$, a metric flow $\XX$ is called $H$-concentrated if for any $s \leq t$, $s,t \in I$, $x_1, x_2 \in \XX_t$
\begin{equation*}
\emph{\Var} (v_{x_1; s}, v_{x_2; s} ) \leq d^2_t (x_1, x_2) + H (t-s),
\end{equation*}
where the variance between two probability measures $\mu_1,\mu_2$ on $(\XX_s,d_s)$ is defined by
\begin{align*}
\emph{\text{Var}}(\mu_1,\mu_2):=\iint d_s^2(x,y) \,d\mu_1(x)d\mu_2(y).
\end{align*}
\end{defn}

For simplicity, from now on, we only consider metric flows $\XX$ over the fixed interval $I:=(-\infty,0]$. We recall the following definition from \cite[Definition 5.1]{Bam20b}.

\begin{defn}[Metric flow pair]
A pair $(\XX, (\mu_t)_{t \in I'})$ is called a metric flow pair over $I \subset \R$ if:
\begin{enumerate}
\item $I' \subset I$ with $|I \setminus I'| = 0$.
\item $\XX$ is a metric flow over $I'$.
\item $(\mu_t)_{t \in I'}$ is a conjugate heat flow on $\XX$ with $\emph{supp}\,\mu_t = \XX_t$ for all $t \in I'$.
\end{enumerate}
If $J \subset I'$, then we say that $(\XX, (\mu_t)_{t \in I})$ is fully defined over $J$. 
\end{defn}

\begin{exmp} \label{exmp:ricci}
Any pointed Ricci flow $(M^n,g(t),x_0)_{t \in I}$ induced by a Ricci shrinker corresponds to a metric flow pair $(\XX, (\mu_t)_{t \in I})$ defined as
\begin{align*}
\lc \XX:=M \times (I\setminus \{0\}) \sqcup x_0 \times \{0\}, \t:=\text{proj}_{I}, (d_t)_{t \in I}, (v_{x,t;s})_{(x,t) \in M \times I, s\in I, s\le t}, \mu_t:=v_{x_0,0;t}\rc,
\end{align*}
where $v_{x,t;s}$ denotes the conjugate heat kernel measure. From \emph{\cite[Proposition 6.5]{LW23}}, $(\XX,(\mu_t)_{t \in I})$ is an $H_n$-concentrated metric flow pair.
\end{exmp}

Next, we recall the definition of a correspondence between metric flows; see \cite[Definition 5.4]{Bam20b}.

\begin{defn}[Correspondence]
Let $(\XX^i, (\mu^i_t)_{t \in I^{\prime,i}})$ be metric flows over $I$, indexed by some $i \in \mathcal{I}$.
A correspondence between these metric flows over $I''$ is a pair of the form
\begin{equation*}
\CF := \big( (Z_t, d^Z_t)_{t \in I''},(\varphi^i_t)_{t \in I^{\prime\prime,i}, i \in \mathcal{I}} \big), 
\end{equation*}
where:
\begin{enumerate}
\item $(Z_t, d^Z_t)$ is a metric space for any $t \in I''$.
\item $I^{\prime\prime,i} \subset I'' \cap I^{\prime,i}$ for any $i \in \mathcal{I}$.
\item $\varphi^i_t : (\XX^i_t, d^i_t) \to (Z_t, d^Z_t)$ is an isometric embedding for any $i \in \mathcal{I}$ and $t \in I^{\prime\prime,i}$.
\end{enumerate}
If $J \subset I^{\prime\prime,i}$ for all $i \in \II$, we say that $\CF$ is fully defined over $J$.
\end{defn}

Given a correspondence, one can define the $\mathbb F$-distance, see \cite[Definition 5.5, Definition 5.7]{Bam20b}.

\begin{defn}[$\IF$-distance within correspondence] \label{FDWC}
We define the $\IF$-distance between two metric flow pairs within $\CF$ (uniform over $J$),
\[ d_{\IF}^{\,\CF, J} \big( (\XX^1, (\mu^1_t)_{t \in I^{\prime,1}}), (\XX^2, (\mu^2_t)_{t \in I^{\prime,2}}) \big), \] 
to be the infimum over all $r > 0$ with the property that there is a measurable subset $E \subset I''$ with
\[ J \subset I'' \setminus E \subset I^{\prime\prime,1} \cap I^{\prime\prime,2} \]
and a family of couplings $(q_t)_{t \in I'' \setminus E}$ between $\mu^1_t, \mu^2_t$ such that:
\begin{enumerate}[label=(\arabic*)]
\item $|E| \leq r^2$.
\item For all $s, t \in I'' \setminus E$, $s \leq t$, we have
\[ \int_{\XX^1_t \times \XX_t^2} d_{W_1}^{Z_s} ( (\varphi^1_s)_* \nu^1_{x^1; s}, (\varphi^2_s)_* \nu^2_{x^2; s} ) dq_t (x^1, x^2) \leq r. \]
\end{enumerate}
Here, $d_{W_1}^{Z_s}$ denotes the $W_1$-Wasserstein distance with respect to $d_s^Z$; see \cite[Page 1128]{Bam20b} for definition.
\end{defn}

\begin{defn}[$\IF$-distance] 
The $\IF$-distance between two metric flow pairs (uniform over $J$),
\[ d_{\IF}^{ J} \big( (\XX^1, (\mu^1_t)_{t \in I^{\prime,1}}), (\XX^2, (\mu^2_t)_{t \in I^{\prime,2}}) \big), \] 
is defined as the infimum of
\[ d_{\IF}^{\,\CF, J} \big( (\XX^1, (\mu^1_t)_{t \in I^{\prime,1}}), (\XX^2, (\mu^2_t)_{t \in I^{\prime,2}}) \big), \] 
over all correspondences $\CF$ between $\XX^1, \XX^2$ over $I''$ that are fully defined over $J$.
\end{defn}

It can be proved, see \cite[Theorem 6.6]{Bam20b}, that $\IF$-convergence implies $\IF$-convergence within a correspondence. More precisely,

\begin{thm} 
Let $(\XX^i, (\mu^i_t)_{t \in I^{\prime,i}})$, $i \in \IN \cup \{ \infty \}$, be metric flow pairs over $I $ that are fully defined over some $J \subset I$.
Suppose that for any compact subinterval $I_0 \subset I$
\[ d_{\IF}^{J \cap I_0} \big( (\XX^i , (\mu^i_t)_{t \in I_0 \cap I^{\prime,i}}), (\XX^\infty , (\mu^\infty_t)_{t \in I_0\cap I^{\prime,\infty}}) \big) \to 0. \] 
Then there is a correspondence $\CF$ between the metric flows $\XX^i$, $i \in \IN \cup \{ \infty \}$, over $I$ such that
\begin{equation*} 
(\XX^i, (\mu^i_t)_{t \in I^{\prime,i}}) \xrightarrow[i \to \infty]{\quad \IF, \CF, J \quad} (\XX^\infty, (\mu^\infty_t)_{t \in I^{\prime,\infty}})
\end{equation*}
on compact time intervals, in the sense that
\begin{equation*} 
d_{\IF}^{\,\CF, J \cap I_0} \big( (\XX^i, (\mu^i_t)_{t \in I_0 \cap I^{\prime,i}}), (\XX^\infty, (\mu^\infty_t)_{t \in I_0\cap I^{\prime,\infty}}) \big) \to 0
\end{equation*}
for any compact subinterval $I_0 \subset I$.
\end{thm}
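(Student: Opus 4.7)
The plan is to upgrade the given $\IF$-convergence, where the correspondence may depend on the compact subinterval, to convergence inside a single correspondence $\CF$ that works simultaneously over every compact subinterval of $I$. This will be accomplished by a diagonal extraction of correspondences over an exhausting family of intervals, followed by an amalgamation of their ambient metric spaces into one, using $\XX^\infty$ as a universal reference present in every auxiliary correspondence.

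First I would exhaust $I$ by an increasing sequence of compact subintervals $I_1 \subset I_2 \subset \cdots$ with $\bigcup_k I_k = I$. For each $k$ the hypothesis together with the definition of $d_{\IF}^{J \cap I_k}$ yields, for every sufficiently large $i$, a correspondence $\CF^{i,k}$ over some $I''_{i,k} \subset I_k$ containing $J \cap I_k$, with ambient spaces $Z^{i,k}_t$ and isometric embeddings $\varphi^{i,k,j}_t : \XX^j_t \hookrightarrow Z^{i,k}_t$ for $j \in \{i, \infty\}$, such that $d_{\IF}^{\CF^{i,k}, J \cap I_k}((\XX^i, \mu^i),(\XX^\infty, \mu^\infty)) < 2^{-i}$. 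A standard diagonal argument then produces indices $k(i) \to \infty$ and correspondences $\CF^i := \CF^{i, k(i)}$, each witnessing a vanishing $\IF$-distance over $J \cap I_{k(i)}$.

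Next I would amalgamate the $\CF^i$ into a single correspondence $\CF$ over $I$. For each $t \in I$, define $Z_t$ as the quotient of the disjoint union $\XX^\infty_t \sqcup \bigsqcup_i Z^{i,k(i)}_t$ (where $i$ ranges over the indices with $t \in I''_{i,k(i)}$) by the relation identifying every $z \in \XX^\infty_t$ with all its images $\varphi^{i,k(i),\infty}_t(z)$, equipped with the chain pseudo-metric obtained by taking infima of lengths of finite paths whose consecutive segments lie in single pieces, and then quotienting once more by the zero-distance relation. Since each $\varphi^{i,k(i),\infty}_t$ is an isometric embedding of $\XX^\infty_t$, any chain between two points of a single piece that detours through another piece must cross $\XX^\infty_t$ and therefore cannot beat the intrinsic distance; it follows that the tautological maps $\XX^\infty_t, \XX^i_t \hookrightarrow Z_t$ are isometric embeddings, giving a correspondence $\CF$ over $I$ in the sense of the appendix.

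Finally I would check that $\CF$ realizes the asserted convergence. For a compact $I_0 \subset I$ and all sufficiently large $i$ one has $I_0 \subset I_{k(i)}$, so the couplings $(q_t)$ and the exception set $E$ witnessing a small value of $d_{\IF}^{\CF^i, J \cap I_{k(i)}}$ also witness the same bound for $d_{\IF}^{\CF, J \cap I_0}$, because the isometric inclusion $Z^{i,k(i)}_s \hookrightarrow Z_s$ preserves $W_1$-distances between pushforward conjugate heat kernels. Hence $d_{\IF}^{\CF, J \cap I_0}((\XX^i, \mu^i),(\XX^\infty, \mu^\infty)) \to 0$ on every compact $I_0 \subset I$, as required. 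The principal technical difficulty is the amalgamation: one must verify that the chain pseudo-metric does not shorten distances within any single piece, and this is exactly where the presence of $\XX^\infty$ as a common reference in every $\CF^i$ is essential. Granted that, both the diagonal extraction and the transfer of the $\IF$-bound follow directly from the definitions.
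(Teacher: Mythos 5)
The statement you were asked to prove is not actually proven in the paper: it is quoted verbatim from Bamler's work, with the paper deferring to \cite[Theorem~6.12]{Bam20b} (the sentence preceding it reads ``It can be proved, see [Bam20b, Theorem~6.12], that\ldots''), so there is no in-paper argument to compare against. Your proposal---extract, via a diagonal argument over an exhaustion of $I$ by compact subintervals, near-optimal correspondences $\CF^{i,k(i)}$, and then amalgamate the ambient spaces $Z^{i,k(i)}_t$ by metric gluing along the isometrically embedded copies of $\XX^\infty_t$, using the chain pseudo-metric---is precisely the standard argument and, to the best of my knowledge, coincides with Bamler's original proof; it is correct, including the key observation that gluing along a complete common isometric subspace makes each piece isometrically embedded in the amalgam and preserves $W_1$-distances of pushforward measures, so the couplings and exception sets transfer directly.
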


Now, we can state the $\IF$-compactness theorem on Ricci flows induced by Ricci shrinkers from \cite[Theorem 6.10]{LW23}.

\begin{thm} \label{Fcom}
Let $(M_i^n,g_i(t),x_i)_{t \in I}$ be a sequence of pointed Ricci flows induced by Ricci shrinkers with the corresponding metric flow pairs $(\XX^i,(\mu_t^i)_{t \in I})$ as in Example \ref{exmp:ricci}.

After passing to a subsequence, there exists an $H_n$-concentrated metric flow pair $(\XX^\infty, (\mu^\infty_t)_{t \in I})$ for which $\XX^\infty$ is future continuous in the sense of \emph{\cite[Definition 4.7]{Bam20b}} such that the following holds.
There is a correspondence $\CF$ between the metric flows $\XX^i$, $i \in \IN \cup \{ \infty \}$, such that on compact time-intervals
\begin{equation} \label{Fconv}
(\XX^i, (\mu^i_t)_{t \in I}) \xrightarrow[i \to \infty]{\quad \IF, \CF \quad} (\XX^\infty, (\mu^\infty_t)_{t \in I}) .
\end{equation}
Moreover, the convergence (\ref{Fconv}) is uniform over any compact $J \subset I$ that only contains times at which $\XX^\infty$ is continuous, see \emph{\cite[Definition 4.7]{Bam20b}}. 
Notice that $\XX^\infty$ is continuous everywhere except possibly at a countable set of times, by \emph{\cite[Corollary 4.11]{Bam20b}}.
\end{thm}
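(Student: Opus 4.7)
The plan is to adapt Bamler's $\IF$-compactness framework from \cite{Bam20c}, originally established for closed Ricci flows, to the non-compact setting of Ricci flows induced by Ricci shrinkers. Since each $(M_i, g_i(t))_{t \in I}$ is a smooth Ricci flow equipped with a global heat kernel by \cite[Theorem 7]{LW20}, each gives rise to a well-defined metric flow pair $(\XX^i, (\mu_t^i))$ as in Example \ref{exmp:ricci}. What needs to be verified is (a) uniform $H_n$-concentration of each $\XX^i$, (b) uniform tightness of the conjugate heat kernel measures $\mu_t^i$, (c) subsequential $\IF$-compactness on compact time-intervals, and (d) a diagonal extraction to upgrade to convergence on all of $I$.

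For (a) I would invoke Bamler's variance monotonicity computation, which only requires smoothness of $g_i(t)$ and the existence of the conjugate heat kernel: differentiating $\Var(v^i_{x_1;s}, v^i_{x_2;s})$ in $s$ produces a term controlled by $-H_n$, and integrating gives $H_n$-concentration uniformly in $i$. For (b) I would use the heat kernel upper bound in Theorem \ref{thm:heatupper} together with Proposition \ref{prop:conc}: since $\mu_t^i = v^i_{x_i,0;t}$, the measure concentrates in a ball $B_{g_i(t)}(z_t^i, C\sqrt{LH_n|t|})$ of mass at least $1-1/L$, where $(z_t^i,t)$ is an $H_n$-center of $(x_i,0)$. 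The drift of $z_t^i$ with $t$ is controlled by Perelman's reduced distance (Proposition \ref{prop:scalar}), giving a uniform-in-$i$ quantitative tightness statement.

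For (c), the uniform tightness lets me reduce the construction of a limit essentially to a sequence of metric measure spaces that are Gromov--Hausdorff precompact on each fixed time-slice. Following Bamler, I would extract pointed-Gromov-Hausdorff limits of the essentially-supported regions at a countable dense set of times, then interpolate between them via the conjugate heat kernel. The reproduction formula in Definition \ref{def:mf}(7) ensures compatibility across slices, the $H_n$-concentration property passes to the limit by lower semicontinuity of the variance under weak convergence, and one obtains a correspondence $\CF$ such that $d_{\IF}^{\,\CF}((\XX^i,\mu^i), (\XX^\infty,\mu^\infty)) \to 0$ on every compact subinterval of $I$. For (d) I would exhaust $I$ by $I_k := [-k,0]$, apply (c) to each, and run a diagonal argument. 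Future continuity of $\XX^\infty$ follows from the fact that each $\XX^i$ is smooth (hence future continuous) and future continuity persists under $\IF$-convergence in Bamler's framework; the uniformity at continuity times is then a consequence of \cite[Theorem 6.12]{Bam20b}, combined with \cite[Corollary 4.35]{Bam20b} which ensures continuity at all but a countable set of times.

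The main obstacle is the uniform tightness in (b), particularly for $t$ deep in the past, since the effective scale over which $\mu_t^i$ concentrates grows in $|t|$ and interacts with the parabolic rescaling implicit in the definition of $\IF$-distance. One has to interface the heat kernel upper bound of Theorem \ref{thm:heatupper}, the drift estimate on $H_n$-centers, and the lower bound on the heat kernel coming from the reduced distance \cite[Theorem 16]{LW20} into a single statement that produces a measurable compact-core of $(M_i, g_i(t))$ carrying almost all the $\mu_t^i$ mass uniformly. A secondary difficulty is that Bamler's original compactness proof occasionally uses compactness of time-slices (e.g., in constructing optimal couplings at each time and in passing to limits of distance functions); these steps must be audited and replaced by arguments that only use tightness rather than total boundedness.
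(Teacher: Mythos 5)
The paper does not prove Theorem \ref{Fcom}; it is quoted verbatim from \cite[Theorem 6.10]{LW23}, and the proof there is essentially a two-step appeal: verify that the metric flow pairs $(\XX^i,(\mu^i_t))$ associated to the shrinker flows are $H_n$-concentrated in Bamler's sense (the content of \cite[Proposition 6.5]{LW23}, recalled in Example \ref{exmp:ricci}), then invoke Bamler's abstract precompactness theorem for $H$-concentrated metric flow pairs from \cite{Bam20b}. That abstract theorem is \emph{not} restricted to closed Ricci flows — it is formulated for arbitrary $H$-concentrated metric flow pairs — so there is no ``transition from compact to non-compact'' to perform at the level of the compactness machinery.

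This is the main gap in your plan: you set yourself the task of re-deriving the compactness theorem, and in doing so you identify ``uniform tightness in (b)'' as the central difficulty, but that difficulty is already absorbed into the hypothesis of $H_n$-concentration, which you have established in (a). Bamler's framework requires no separate tightness argument; the variance bound $\Var(\mu^i_t) \le H_n|t|$ built into $H$-concentration is exactly what makes the probability measures $\mu^i_t$ uniformly concentrated, and Bamler's proof of \cite[Theorem 7.6]{Bam20b} does not use compactness of time-slices at all. Your plan to extract pointed-Gromov-Hausdorff limits at a dense set of times and interpolate would, if carried out carefully, re-prove a special case of Bamler's theorem, but it duplicates work that is already available off-the-shelf and would need to confront several technicalities (e.g.\ lower semicontinuity of variance along weak limits, compatibility of couplings across time-slices) that Bamler has already resolved in the general metric-flow setting. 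Two further small inaccuracies: the $\IF$-compactness theory resides in \cite{Bam20b}, not \cite{Bam20c}; and future continuity of the limit is not an invariance of $\IF$-convergence that ``persists'' — rather, Bamler's compactness theorem explicitly produces a future-continuous representative of the limit, which is a distinct statement. So the correct and far shorter route is: state (a) as in Example \ref{exmp:ricci}, cite \cite[Theorem 7.6]{Bam20b} for (c) directly, and perform the diagonal argument (d) over the exhaustion $I_k = [-k,0]$. The countability of discontinuity times and the uniformity of the convergence at continuity times then come from \cite[Corollary 4.35]{Bam20b} and \cite[Theorem 6.12]{Bam20b} respectively, as you note at the end.
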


Under the circumstance of Theorem \ref{Fcom}, we recall the following definitions from \cite[Definition 6.10]{Bam20b}.

\begin{defn}[Convergence of points] \label{Pconv}
For a sequence of times $T_i \in I$ and a sequence of points $x_i \in \XX^i_{T_i}$, $i \in \IN \cup \{ \infty \}$, we say that $x_i$ converge to $x_\infty$ within $\CF$ (and uniform over $J$) and write
\begin{equation*}
x_i \xrightarrow[i \to \infty]{\quad \CF, J \quad} x_\infty,
\end{equation*}
if $T_i \to T_\infty$, $J \subset (-\infty,T_{\infty})$, and for any compact subinterval $I_0 \subset (-\infty,T_{\infty})$, there exist measurable sets $E_i \subset I_0$, $i \in \IN $, such that
\begin{enumerate}
\item $J \cap I_0 \subset I_0 \setminus E_i \subset I^i \cap I^\infty$ for large $i$.
\item $|E_i| \to 0$.
\item $\sup_{t \in I_0 \setminus E_i} d_{W_1}^{Z_t} ( (\varphi^i_t)_* v^i_{x_i;t}, (\varphi^\infty_t)_* v^{\infty}_{x_\infty;t}) \to 0$.
\end{enumerate}

We say $x_i$ strictly converge to $x_\infty$ within $\CF$ if $T_i=T_{\infty}$, the convergence \eqref{Fconv} is uniform at $T_{\infty}$, and
\[ \varphi^i_{T_{\infty}}(x_i) \xrightarrow[i \to \infty]{\quad \quad} \varphi^\infty_{T_{\infty}}(x_\infty). \]
\end{defn}

From \cite[Theorem 6.13]{Bam20b}, the strict convergence implies convergence of points.

Next, we recall the following definition of Ricci flow spacetime from~\cite[Definition 9.1]{Bam20b}; see also~\cite[Definition 1.2]{KL17}. Here, for simplicity, we further require that each time slice of the Ricci flow spacetime is connected and of the same dimension $n$.

\begin{defn}[Ricci flow spacetime] \label{def_RF_spacetime}
An $n$-dimensional Ricci flow spacetime over an interval $I \subset \R$ is a tuple $(\MM, \mathfrak{t}, \partial_{\mathfrak{t}}, g)$ with the following properties:
\begin{enumerate}[label=(\arabic*)]
\item $\MM$ is an $(n+1)$-dimensional smooth manifold with smooth boundary $\partial \MM$, and $\MM$ is a disjoint union of smooth manifolds of dimension $n$.

\item $\mathfrak{t} : \MM \to I$ is a smooth function without critical points. For any $t \in I$ we denote by $\MM_t := \mathfrak{t}^{-1} (t) \subset \MM$ the time-$t$-slice of $\MM$.

\item $\tf (\partial \MM) \subset \partial I$.

\item $\partial_{\mathfrak{t}}$ is a smooth vector field on $\MM$ that satisfies $\partial_{\mathfrak{t}} \mathfrak{t} \equiv 1$.

\item $g$ is a smooth inner product on the spatial subbundle $\ker (d \mathfrak{t} ) \subset T \MM$.
For any $t \in I$ we denote by $g_t$ the restriction of $g$ to the time-$t$-slice $\MM_t$.

\item $(\MM_t, g_t)$ is connected for all $t \in I$.

\item $g$ satisfies the Ricci flow equation: $\mathcal{L}_{\partial_\mathfrak{t}} g = - 2 Ric (g)$.
Here $Ric (g)$ denotes the symmetric $(0,2)$-tensor on $\ker (d \mathfrak{t} )$ that restricts to the Ricci tensor of $(\MM_t, g_t)$ for all $t \in I$.
\end{enumerate}
\end{defn}

Obviously, a conventional Ricci flow $(M,g(t))_{t \in I}$ is a Ricci flow spacetime by setting $\mathcal M=M \times I$, $\tf$ to be the projection on the time factor, and $\partial_{\tf}$ to be the unit vector on $I$.

Suppose we further assume that $(M_i^n,g_i(t))$ are induced Ricci flows of a sequence of Ricci shrinkers in the moduli space $\mathcal M(A)$. In that case, one has the following structure theorem regarding the limit metric flow (see~\cite[Theorem 6.12]{LW23}), which is a direct generalization of the results of \cite{Bam20c}.

\begin{thm} \label{Thm_limit}
Let $(M_i^n,g_i(t),x_i)_{t \in I}$ be a sequence of pointed Ricci flows induced by Ricci shrinkers in $\mathcal M(A)$ and $(\XX^\infty, (\mu^\infty_t)_{t \in I})$ the limit metric flow pair obtained in Theorem~\ref{Fcom}. Then the following properties hold.

\begin{enumerate}[label=(\arabic*)]
\item There exists a decomposition
\begin{equation*}
\XX^{\infty}_{0}=\{x_{\infty}\},\quad \XX^{\infty}_{t<0}=\mathcal R \sqcup \mathcal S,
\end{equation*}
such that $\mathcal R$ is given by an $n$-dimensional Ricci flow spacetime $(\mathcal R, \tf, \partial^{\infty}_{\tf}, g^{\infty})$, in the sense of \emph{\cite[Definition 9.1]{Bam20b}} and $\emph{\text{dim}}_{\mathcal M^*}(\mathcal S) \le n-2$, 
where $\emph{\text{dim}}_{\mathcal M^*}$ denotes the $*$-Minkowski dimension in \emph{\cite[Definition 3.31]{Bam20b}}. Moreover, $\mu^{\infty}_t(\MS_t)=0$ for any $t<0$.

\item Every tangent flow $(\XX',(v_{x_{\infty}';t})_{t \le 0})$ at every point $x \in \XX^{\infty}$ is a metric soliton in the sense of \emph{\cite[Definition 3.42]{Bam20b}}.
 Moreover, $\XX'$ is the Gaussian soliton iff $x \in \mathcal R$. If $x \in \MS$, the singular set of $(\XX',(v_{x_{\infty}';t})_{t \le 0})$ on each $t <0$ has Minkowski dimension at most $n-4$. 
 In particular, if $n=3$, the metric soliton is a smooth Ricci flow associated with a $3$-dimensional Ricci shrinker. If $n=4$, each slice of the metric soliton is a smooth Ricci shrinker orbifold with isolated singularities.

\item $\mathcal R_t=\mathcal R \cap \XX^{\infty}_t$ is open such that the restriction of $d_t$ on $\mathcal R_t$ agrees with the length metric of $g_t$.

\item The convergence (\ref{Fconv}) is smooth on $\mathcal R$, in the following sense. There exists an increasing sequence $U_1 \subset U_2 \subset \ldots \subset \mathcal R$ of open subsets with $\bigcup_{i=1}^\infty U_i = \mathcal R$, 
open subsets $V_i \subset M_i \times I$, time-preserving diffeomorphisms $\phi_i : U_i \to V_i$ and a sequence $\ep_i \to 0$ such that the following holds:
\begin{enumerate}[label=(\alph*)]
\item We have
\begin{align*}
\Vert \phi_i^* g^i - g^\infty \Vert_{C^{[\ep_i^{-1}]} ( U_i)} & \leq \ep_i, \\
\Vert \phi_i^* \partial^i_{\tf} - \partial^\infty_{\tf} \Vert_{C^{[\ep_i^{-1}]} ( U_i)} &\leq \ep_i, \\
\Vert w^i \circ \phi_i - w^\infty \Vert_{C^{[\ep_i^{-1}]} ( U_i)} &\leq \ep_i,
\end{align*}
where $g^i$ is the spacetime metric induced by $g_i(t)$, and $w^i$ is the conjugate heat kernel defined by $d\mu^i=w^i dg^i$, $i \in \IN \cup \{ \infty \}$.

\item Let $y_\infty \in \mathcal R$ and $y_i \in M_i \times (-\infty,0)$.
Then $y_i$ converges to $y_\infty$ within $\CF$ \emph{(cf. \cite[Definition 6.10]{Bam20b})} if and only if $y_i \in V_i$ for large $i$ and $\phi_i^{-1} (y_i) \to y_\infty$ in $\mathcal R$.

\item If the convergence (\ref{Fconv}) is uniform at some time $t \in I$, then for any compact subset $K \subset \RR_t$ and for the same subsequence we have
\[ \sup_{x \in K \cap U_i} d^Z_t (\varphi^i_t (\phi_i(x)), \varphi^\infty_t (x) ) \longrightarrow 0. \]
\end{enumerate}
\end{enumerate}
\end{thm}

Next, we consider the simple case that a conventional Ricci flow gives the limit metric flow.

\begin{prop} \label{prop_limit2}
With the same assumptions as Theorem \ref{Thm_limit}, suppose $\XX^{\infty}_{t<0}$ is induced by a smooth Ricci flow $(M_{\infty}^n,g_{\infty}(t))_{t \in (-\infty,0)}$ with bounded curvature on any compact time interval. 
For any $\bar t <0$, we assume $(z_i,\bar t)$ is an $H$-center of $x_i$, for a constant $H \ge H_n$. Then, there exists a point $z_{\infty} \in M_{\infty}$ such that
\begin{equation} \label{App:EX1}
(M^n_i,g_i(t),z_i)_{t \in (-\infty,\bar t]} \longright{pointed-{C}^{\infty}-Cheeger-Gromov} (M^n_\infty, g_{\infty}(t),z_{\infty})_{t \in (-\infty,\bar t]}.
\end{equation}
\end{prop}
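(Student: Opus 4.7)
The plan is to combine the smooth convergence on $\RR$ from Theorem \ref{Thm_limit}(4) with the concentration of conjugate heat flows around $H$-centers (Proposition \ref{prop:conc}) to first identify a base point $z_\infty\in M_\infty$, and then to upgrade the $\IF$-convergence to pointed $C^\infty$-Cheeger-Gromov convergence using Theorem \ref{Thm_limit}(4)(b). The key bridge is Definition \ref{Pconv}: convergence of points within $\CF$ is equivalent to the existence of preimages $\phi_i^{-1}(z_i,\bar t)$ in $\RR$ that converge to $(z_\infty,\bar t)$.

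First I would construct $z_\infty$. Because $\MS_{t<0}=\emptyset$ by hypothesis and $\XX^\infty_{t<0}$ is the smooth Ricci flow $(M_\infty,g_\infty(t))$, the measure $\mu^\infty_{\bar t}$ is a probability measure on $M_\infty$ with smooth positive density, and the standard existence result gives an $H_n$-center $z_\infty\in M_\infty$ of $x_\infty$ at time $\bar t$. By Theorem \ref{Fcom}, after passing to a subsequence the convergence is uniform at $\bar t$, whence $(\varphi^i_{\bar t})_*\mu^i_{\bar t}\to(\varphi^\infty_{\bar t})_*\mu^\infty_{\bar t}$ in $W_1$ on $(Z_{\bar t},d^Z_{\bar t})$. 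Applying Proposition \ref{prop:conc} (and its analogue on the limit metric flow) shows that these measures concentrate in balls of controlled radius around $z_i$ and $z_\infty$ respectively. Combining concentration with $W_1$-convergence, $\{\varphi^i_{\bar t}(z_i)\}$ is precompact in $Z_{\bar t}$ and every accumulation point must lie in $\varphi^\infty_{\bar t}(M_\infty)$ and be an $H$-center of $(\varphi^\infty_{\bar t})_*\mu^\infty_{\bar t}$. After replacing $z_\infty$ with (the preimage of) such an accumulation point and passing to a further subsequence, we arrive at
\[ d^Z_{\bar t}\bigl(\varphi^i_{\bar t}(z_i),\,\varphi^\infty_{\bar t}(z_\infty)\bigr)\longrightarrow 0, \]
which is exactly strict convergence at time $\bar t$ in the sense of Definition \ref{Pconv}. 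Theorem \ref{Thm_limit}(4)(b) together with the implication strict $\Rightarrow$ $\CF$-convergence then yields $(z_i,\bar t)\in V_i$ for large $i$ and $\phi_i^{-1}(z_i,\bar t)\to(z_\infty,\bar t)$ in $\RR$.

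To upgrade to smooth Cheeger-Gromov convergence on $(-\infty,\bar t]$, I would fix arbitrary $R,T>0$ and consider
\[ K:=\overline{B_{g_\infty(\bar t)}(z_\infty,R)}\times[\bar t-T,\bar t], \]
which is a compact subset of $\RR$: the spatial factor is compact by completeness of $(M_\infty,g_\infty(\bar t))$, and the bounded-curvature hypothesis on the compact time interval $[\bar t-T,\bar t]$ controls the metric distortion along $\partial^\infty_\tf$. Since $U_i\nearrow\RR$, $K\subset U_i$ for all large $i$, and the time-preserving diffeomorphisms $\phi_i|_K$ yield smooth spatial embeddings of $\overline{B_{g_\infty(\bar t)}(z_\infty,R)}$ into $M_i$ at each $t\in[\bar t-T,\bar t]$, with $\phi_i^*g^i\to g^\infty$ and $\phi_i^*\partial^i_\tf\to\partial^\infty_\tf$ in $C^{[\ep_i^{-1}]}$. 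Composing with a small diffeomorphism of $M_\infty$ that absorbs the basepoint discrepancy $\phi_i^{-1}(z_i,\bar t)\to(z_\infty,\bar t)$ produces maps sending $z_\infty$ exactly to $z_i$, yielding the claimed pointed $C^\infty$-Cheeger-Gromov convergence.

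The main obstacle is the first step: $\IF$-convergence identifies measures and distance functions but not distinguished points, so extracting a coherent limit of the chosen $H$-centers $z_i$ requires the concentration-plus-$W_1$ argument above together with a careful choice of $z_\infty$ among the $H_n$-centers of $x_\infty$ on the limit side. Once the basepoint convergence is secured, the smooth structure of $\RR$ and the completeness and bounded-curvature-on-compact-intervals hypotheses on $M_\infty$ routinely deliver the stated pointed smooth convergence.
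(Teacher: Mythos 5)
Your proposal follows the paper's proof in its essential structure: use the uniformity of $\IF$-convergence at $\bar t$ to get $W_1$-convergence of the pushed-forward conjugate heat flows, combine this with the concentration estimate of Proposition \ref{prop:conc} to pin down a limit point, pass through Definition \ref{Pconv} and Theorem \ref{Thm_limit}(4)(b) to obtain $\phi_i^{-1}(z_i,\bar t)\to(z_\infty,\bar t)$ in $\RR$, and finish with the smooth convergence data in Theorem \ref{Thm_limit}(4)(a) on exhausting compact subsets. The only place you overreach is the assertion that concentration plus $W_1$-convergence makes $\{\varphi^i_{\bar t}(z_i)\}$ precompact in $Z_{\bar t}$: what these two facts give directly is only that for a fixed $r=\sqrt{LH|\bar t|}$ one can find $y_i\in\varphi^i_{\bar t}\bigl(B_{g_i(\bar t)}(z_i,r)\bigr)$ and $y'_i\in\varphi^\infty_{\bar t}\bigl(\overline{B_{g_\infty(\bar t)}(z'_\infty,r)}\bigr)$ with $d^Z_{\bar t}(y_i,y'_i)\to 0$, which forces $\varphi^i_{\bar t}(z_i)$ into a \emph{bounded} (not a priori precompact) subset of $Z_{\bar t}$, since closed balls of $Z_{\bar t}$ need not be compact. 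To extract a genuine limit one must first send $y'_i$ to a subsequential limit $y_\infty$ in the compact set $\varphi^\infty_{\bar t}(\overline{B_{g_\infty(\bar t)}(z'_\infty,r)})$, obtain strict convergence of the preimage points, push everything through the diffeomorphisms $\phi_i$ of Theorem \ref{Thm_limit}(4), and only then use the compactness of closed balls in the complete manifold $(M_\infty,g_\infty(\bar t))$ to extract $z_\infty$; the paper does exactly this via its intermediate points $z'_i$ and $z''_\infty$. Likewise, your remark that every accumulation point is itself an $H$-center of $\mu^\infty_{\bar t}$ is both unnecessary for the argument and not immediate from $W_1$-convergence alone (variance control would want second-moment information), so it is safer to drop it. With that one step tightened, the proof matches the paper's.
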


\begin{proof}
The Cheeger-Gromov convergence of the Ricci flows is essentially implied by Theorem \ref{Thm_limit} (4). 
To finish the proof, we only need to prove $(z_i,\bar t)$ converge to $(z_{\infty},\bar t)$ within $\CF$ (see Definition \ref{Pconv}), where $\CF$ is the correspondence obtained in Theorem \ref{Fcom}.

It is clear that $\XX^{\infty}$ is continuous at $\bar t$ in the sense of \cite[Definition 4.7]{Bam20b}, since $\XX^{\infty}_{t<0}$ is induced by a smooth Ricci flow. 
Consequently, the convergence (\ref{Fconv}) is uniform at $\bar t$ by Theorem \ref{Fcom}. Therefore, we conclude that
\begin{align} \label{App:E1}
d_{W_1}^{Z_{\bar t}} ( (\varphi^i_{\bar t})_* \mu^i_{\bar t} , (\varphi^{\infty}_{\bar t})_* \mu^{\infty}_{\bar t}) \to 0.
\end{align}
Now, we set $r=\sqrt{10H|\bar t|}$, $Y_i=\varphi^i_{\bar t} \lc B_{g_i(\bar t)}(z_i,r) \rc$ and $Y_{\infty}=\varphi^{\infty}_{\bar t} \lc B_{g_{\infty}(\bar t)}(z'_{\infty},r) \rc$, where $(z'_{\infty},\bar t)$ is an $H$-center of $x_{\infty}$. 

\textbf{Claim}: There exists a sequence $y_i \in Y_i$ which converges to $y_{\infty} \in Y_{\infty}$ in $Z_{\bar t}$.

\emph{Proof of Claim}: Otherwise, there exists an $\ep >0$ such that $d_{Z_{\bar t}}(x,y) \ge \ep$ for $(x,y) \in Y_i \times Y_{\infty}$. It follows from \eqref{App:E1} that
\begin{align} \label{App:E2}
\int_{Z_{\bar t} \times Z_{\bar t}} d_{Z_{\bar t}}(x,y) \,dq_i(x,y) \to 0,
\end{align}
where $q_i \in \mathcal P( Z_{\bar t}\times Z_{\bar t})$ is a coupling between $(\varphi^i_{\bar t})_* \mu^i_{\bar t}$ and $(\varphi^{\infty}_{\bar t})_* \mu^{\infty}_{\bar t}$. From \eqref{App:E2} and our assumption, we derive that
\begin{align} \label{App:E3}
q_i(Y_i \times Y_{\infty}) \le \ep^{-1}\int_{Y_i \times Y_{\infty}} d_{Z_{\bar t}}(x,y) \,dq_i(x,y) \to 0.
\end{align}
However, it follows from Proposition \ref{prop:conc} that 
\begin{align} \label{App:E4}
q_i((Z_{\bar t} \setminus Y_i) \times Y_{\infty}) \le q_i((Z_{\bar t} \setminus Y_i) \times Z_{\bar t})=(\varphi^{\infty}_{\bar t})_* \mu^{\infty}_{\bar t}(Z_{\bar t} \setminus Y_i) \le \frac{1}{10}
\end{align}
and for the same reason
\begin{align} \label{App:E5}
q_i( Y_i \times (Z_{\bar t} \setminus Y_{\infty})) \le \frac{1}{10}.
\end{align}
Combining \eqref{App:E3}, \eqref{App:E4} and \eqref{App:E5}, we derive a contradiction since $q_i$ is a probability measure. Therefore, the Claim is proved.

Next, we set $(z'_i,\bar t)=(\varphi^i_{\bar t})^{-1}(y_i)$ and $(z''_\infty,\bar t)=(\varphi^\infty_{\bar t})^{-1}(y_\infty)$. 
Therefore, it follows from the Claim that $(z'_i,\bar t)$ strictly converges to $(z''_{\infty},\bar t)$ within $\CF$ and hence by Theorem \ref{Thm_limit} (4)(b),
\begin{equation*}
(M^n_i,g_i(t),z'_i)_{t \in (-\infty,\bar t]} \longright{pointed-{C}^{\infty}-Cheeger-Gromov} (M^n_\infty, g_{\infty}(t),z''_{\infty})_{t \in (-\infty,\bar t]}.
\end{equation*}

Since $d_{g_i(\bar t)}(z_i,z_i') \le r$ and $d_{g_\infty (\bar t)}(z'_{\infty},z_{\infty}'') \le r$, one can easily conclude the existence of $z_{\infty}$ and \eqref{App:EX1} by using the diffeomorphisms $\phi_i$ constructed in Theorem \ref{Thm_limit} (4).
\end{proof}

We end the Appendix by proving the following version of the Hamilton-Ivey pinching estimate; see also~\cite[Corollary 2.4]{CBL07}.

\begin{prop} \label{prop_3dRFST}
Let $(\MM, \mathfrak{t}, \partial_{\mathfrak{t}}, g)$ be a $3$-dimensional Ricci flow spacetime over $I=(-\infty,0]$ such that $(\MM_t, g_t)$ is complete for any $t \in I$. Then $(\MM_t, g_t)$ has nonnegative sectional curvature for all $t \in I$.
\end{prop}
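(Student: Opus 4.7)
The plan is to run a localized Hamilton--Ivey pinching argument, adapted to the spacetime setting by exploiting the completeness of each time slice together with the ancientness $I=(-\infty,0]$. I will argue by contradiction: suppose there exists $(x_0,t_0)\in \MM$ at which the smallest eigenvalue $\nu$ of the curvature operator of $(\MM_{t_0},g_{t_0})$ satisfies $\nu(x_0,t_0)=-a<0$.

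The first step is to set up the local Ricci flow structure. Because $\partial_\tf$ is a smooth vector field on $\MM$ with $\partial_\tf \tf \equiv 1$, around any point of $\MM$ we obtain, by flowing along $\partial_\tf$, a smooth identification of a parabolic neighborhood with a conventional $3$-dimensional Ricci flow on some $B_{g_{t_0}}(x_0,r)\times[t_0-\tau,t_0]$. On such a piece the curvature operator satisfies the usual evolution equation $\partial_\tf Rm=\Delta Rm+Rm*Rm$, so Hamilton's tensorial ODE-type maximum principle and the scalar evolution $\partial_\tf R=\Delta R+2|Rc|^2$ both apply. This local structure is all that is needed to propagate inequalities along $\partial_\tf$; completeness of each $(\MM_s,g_s)$ supplies the spatial reach required to run the maximum principle on each time slice via an Omori--Yau or cutoff-exhaustion argument.

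The second step is to establish a localized Hamilton--Ivey pinching estimate in the spacetime. Following the method of \cite{CBL07}, I plan to show: for every $T>0$ one has
\[
R(x_0,t_0)\;\geq\; a\bigl(\log\bigl(a(1+T)\bigr)-3\bigr),
\]
where the inequality is proved by comparing the test quantity $\log R-\log|\nu|$ (with appropriate spatial cutoff supported in a large ball of the complete time slice $\MM_{s}$) to the corresponding ODE on a $3$-dimensional Ricci flow. The crucial feature of the localized version, in contrast to the classical Hamilton--Ivey proof, is that \emph{no} uniform curvature bound is required at $t_0-T$: the spatial cutoff absorbs the bad behavior at infinity, and the complete structure of each time slice $\MM_s$ for $s\in[t_0-T,t_0]$ is exactly what makes this spatial cutoff legitimate. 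Since $I=(-\infty,0]$, the time slices $\MM_{t_0-T}$ exist for every $T>0$, so this estimate is indeed available for arbitrary $T$.

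Taking $T\to\infty$ in the displayed inequality forces $R(x_0,t_0)=+\infty$, contradicting the smoothness of $g$ at $(x_0,t_0)$. Hence $\nu\geq 0$ on all of $\MM$, which is exactly nonneg sectional curvature on each $(\MM_t,g_t)$. The main obstacle I anticipate is the localized pinching estimate itself: one must arrange the cutoff so that its evolution along $\partial_\tf$ does not overwhelm the good terms coming from Hamilton's ODE comparison, while simultaneously ensuring that the backward trajectory of $(x_0,t_0)$ stays inside the spacetime for as long as needed; both issues are handled exactly by the localized maximum-principle package in \cite{CBL07}.
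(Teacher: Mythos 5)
Your proposal differs from the paper's proof in two substantive ways, and the second introduces a genuine gap.

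Regarding the spacetime setup: to reach time $t_0-T$ from $(x_0,t_0)$ you propose to flow along $\partial_\tf$, but in a Ricci flow spacetime there is no a priori guarantee that the integral curve of $-\partial_\tf$ through $(x_0,t_0)$ persists for an arbitrary time $T$. The paper sidesteps this by using the connectedness of every time slice (Definition~\ref{def_RF_spacetime}, item (6)) to build a smooth curve $\gamma$ with $\gamma(t)\in\MM_t$ joining $x_0\in\MM_0$ to an arbitrary $x_{-T}\in\MM_{-T}$; compactness of a tube $\bigcup_{t\in[-T,0]}\overline{B_{g_t}(\gamma(t),s)}$ around $\gamma$ then produces a local bound $Rc\le 2r^{-2}$, and Perelman's distance estimate applied along $\gamma$ gives $(\partial_\tf-\Delta_t)d_{g_t}(\gamma(t),\cdot)\ge -4r^{-1}$ after shrinking $r$. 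This is the spacetime-specific input that \cite{CBL07} does not provide.

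The more serious issue is the displayed inequality $R(x_0,t_0)\ge a(\log(a(1+T))-3)$. This is the classical Hamilton--Ivey pinching estimate, which requires a curvature normalization (say $\nu\ge-1$) at the initial time $t_0-T$; in the present setting no such bound is available at $t_0-T$, and the spatial cutoff in \cite{CBL07} cannot manufacture one. Because the logarithm is not scale invariant, the ODE comparison underlying Hamilton--Ivey cannot be initialized without control at the earlier slice, so the estimate does not hold as stated. This is precisely why \cite[Corollary 2.4]{CBL07} (the ancient-flow analogue cited by the paper) is proved not via a log-pinching estimate but via the scale-invariant inductive estimate on $\lambda+\mu+k\nu$. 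The paper follows the same route: it first proves $R\ge 0$ using the localized scalar lower bound $R\ge\min\{-3/(t+T),-C_1/(Ar)^2\}$ modeled on \cite[Proposition 2.1]{CBL07}, then runs the inductive estimate $\lambda+\mu+k\nu\ge\min\{-C_k/(t+T),-C_k/(Ar)^2\}$ modeled on \cite[Proposition 2.2]{CBL07}, and concludes by sending $A\to\infty$, then $T\to\infty$, then $k\to\infty$. To repair your proof you would need to replace the Hamilton--Ivey estimate by this scale-invariant inductive quantity, since there is no way to supply the missing initial curvature normalization.
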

\begin{proof}
We first prove that $(\MM_t, g_t)$ has nonnegative scalar curvature for all $t \in I$.

From Definition \ref{def_RF_spacetime} (6), it is clear that $\MM$ is path-connected. For any $T>0$, we fix two points $x_0 \in \MM_0$ and $x_{-T} \in \MM_{-T}$. Then there exists a smooth curve $\gamma(t)_{t \in [-T,0]} \subset \MM$ such that 
\begin{align*}
x_t:=\gamma(t) \in \MM_t
\end{align*}
for any $t \in [-T,0]$. Since $\bigcup_{t \in [-T,0]} \overline{B_{g_t}(x_t,s)}$ is compact in $\MM$ for any $s>0$, there exists a small constant $r>0$ such that $\Rc_{g_t} \le 2r^{-2}$ on $B_{g_t}(x_t,r)$ for any $t \in [-T,0]$. 
Then it follows from the distance estimate \cite[Lemma 8.3(a)]{Pe1} that
\begin{align} \label{App:E11}
(\partial_{\tf}-\Delta_{t}) d_{g_t}(x_t,x) \ge -\frac{10}{3}r^{-1}-C_0
\end{align}
whenever $x \in \MM_t$ and $d_{g_t}(x,x_t)>r$. Here, the constant $C_0:=\sup_{t \in [-T,0]} |\gamma'(t)\vert_{\ker (d \mathfrak{t} )}|_{g_t}$. By shrinking $r$, we may assume from \eqref{App:E11} that
\begin{align} \label{App:E12}
(\partial_{\tf}-\Delta_{t}) d_{g_t}(x_t,x) \ge -4r^{-1}.
\end{align}

Now, we consider the following function 
\begin{align*}
u:= \phi \lc \frac{d_{g_t}(x_t,x)+4r^{-1}(t+T)}{Ar} \rc R_{g_t},
\end{align*}
where $\phi$ is a fixed smooth nonnegative decreasing function such that $\phi \equiv 1$ on $(-\infty,7/8]$ and $\phi \equiv 0$ on $[1,\infty)$, and $A \gg Tr^{-2}$ is a constant. 
By using \eqref{App:E12} and the same argument as in \cite[Proposition 2.1]{CBL07}, we conclude that
\begin{align} \label{App:E13}
R_{g_t}(x) \ge \min \left \{-\frac{3}{t+T},-\frac{C_1}{(Ar)^2} \right \}
\end{align}
if $d_{g_t}(x_t,x) \le 3Ar/4$ and $t \in (-T,0]$, where $C_1$ is a universal constant. Since $\MM_t$ is connected, if we let $A \to \infty$, it follows immediately from \eqref{App:E13} that
\begin{align*}
R_{g_t}(x) \ge -\frac{3}{t+T}
\end{align*}
for any $t \in (-T,0]$ and $x\in \MM_t$. By letting $T \to \infty$, we conclude that $R_{g_t} \ge 0$ on $\MM$.

Now, we set $\lambda \ge \mu \ge \nu$ to be the eigenvalues of the curvature operator. By using the same inductive argument as in \cite[Proposition 2.2]{CBL07}, we conclude that for any integer $k \ge 1$
\begin{align*} 
\lambda+\mu+k\nu \ge \min \left \{-\frac{C_k}{t+T},-\frac{C_k}{(Ar)^2} \right \}
\end{align*}
on $B_{g_t}(x_t,A/2)$, for any $t \in (-T,0]$. Here, $A \gg Tr^{-2}$ and $C_k$ is a constant depending only on $k$. For the same reason as above, it is clear that
\begin{align*}
\lambda+\mu+k\nu \ge 0
\end{align*}
on $\MM$ and hence $\nu \ge 0$. 

In sum, we have shown that $(\MM_t, g_t)$ has nonnegative sectional curvature for all $t \in I$.
\end{proof}

\vskip10pt

Yu Li, Institute of Geometry and Physics, University of Science and Technology of China, No. 96 Jinzhai Road, Hefei, Anhui Province, 230026, China; Hefei National Laboratory, No. 5099 West Wangjiang Road, Hefei, Anhui Province, 230088, China; yuli21@ustc.edu.cn.\\

Bing Wang, Institute of Geometry and Physics, School of Mathematical Sciences, University of Science and Technology of China, No. 96 Jinzhai Road, Hefei, Anhui Province, 230026, China; Hefei National Laboratory, No. 5099 West Wangjiang Road, Hefei, Anhui Province, 230088, China; topspin@ustc.edu.cn.\\


\begin{thebibliography}{99}

\bibitem{An89} M. Anderson, \emph{Ricci curvature bounds and Einstein metrics on compact manifolds}, J. Amer. Math. Soc. 2:3 (1989), 455-490.

\bibitem{Bam20a} R. H. Bamler, \emph{Entropy and heat kernel bounds on a Ricci flow background}, arXiv:2008.07093v3.

\bibitem{Bam20b} R. H. Bamler, \emph{Compactness theory of the space of super Ricci flow}, Invent. Math., 233(3):1121-1277, 2023.

\bibitem{Bam20c} R. H. Bamler, \emph{Structure theory of non-collapsed limits of Ricci flow}, arXiv:2009.03243v2.

\bibitem{BCCD22} R. H. Bamler, C. Cifarelli, R. J. Conlon, A. Deruelle, \emph{A new complete two-dimensional shrinking gradient K\"ahler-Ricci soliton}, Geom. Funct. Anal. 34, 377-392 (2024).

\bibitem{BandoMabuchi} S. Bando, T. Mabuchi, \emph{Uniqueness of Einstein-K\"ahler metrics modulo connected group actions}, Algebraic Geometry(Sendai, 1985), 11-40. Adv. Stud. Pure. Math., 10. North-Holland, Amsterdam-New York, 1987. 

\bibitem{Brakke}  K. Brakke,  \emph{The motion of a surface by its mean curvature}, Mathematical Notes 20, Princeton University Press, Princeton, N.J., 1978.

\bibitem{Besse} A. L. Besse, \emph{Einstein manifolds}, classics in Mathematics, Springer-Verlag, Berlin, 2008. 

\bibitem{BKN89} S. Bando, A. Kasue, H. Nakajima, \emph{On a construction of coordinates at infinity on manifolds with fast curvature decay and maximal volume growth}, Invent. Math. 97:2 (1989), 313–349.

\bibitem{Cao96} H.-D. Cao, \emph{Existence of gradient K\"ahler-Ricci solitons}, Elliptic and parabolic methods in geometry(Minneapolis, MN, 1994), 1-16, A K Peters, Wellesley, MA, 1996.

\bibitem{CC13} H.-D. Cao, Q. Chen, \emph{On Bach-flat gradient shrinking Ricci solitons}, Duke Math. J. 162 (2013) 1149-1169.

\bibitem{CCZ} H.-D. Cao, B.-L. Chen, X.-P. Zhu, \emph{Recent developments on Hamilton's Ricci flow}, Surveys in differential geometry, Vol. XII. Geometric flows, 47-112, Surv. Differ. Geom., 12, Int. Press, Somerville, MA, 2008.

\bibitem{CZ10} H.-D. Cao, D. Zhou, \emph{On complete gradient shrinking Ricci solitons}, J. Differ. Geom., 85(2010), no. 2, 175-186.

\bibitem{CZ06} H.-D. Cao, X.-P. Zhu,  \emph{A complete proof of the Poincar\'e and geometrization conjectures—application of the Hamilton-Perelman theory of the Ricci flow}, Asian J. Math. 10 (2006), no. 2, 165-492.

\bibitem{CN09} J. Carrillo, L. Ni, \emph{Sharp logarithmic Sobolev inequalities on gradient solitons and applications}, Comm. Anal. Geom., 17(4):721-753, 2009.

\bibitem{CN15} J. Cheeger, A. Naber, \emph{Regularity of Einstein manifolds and the codimension 4 conjecture}, Ann. Math. 182 (2015), no.3, 1093-1165.

\bibitem{Cifarelli22} C. Cifarelli, \emph{Uniqueness of shrinking gradient Kähler-Ricci solitons on non-compact toric manifolds},  J. London Math Soc.,  106 (2022), no. 4, 3746-3791.

\bibitem{CBL07} B.-L. Chen, \emph{Strong uniqueness of the Ricci flow}, J. Differ. Geom., 82(2009), no. 2, 363-382. 

\bibitem{CLW08} X. Chen, C. Lebrun, B. Weber, \emph{On conformally K\"ahler, Einstein manifolds}, J. Amer. Math. Soc., 21(2008), no. 4, 1137-1168.  

\bibitem{CW12} X. Chen, B. Wang, \emph{The K\"ahler Ricci flow on Fano surfaces(I)}, Math. Z., 270(2012), no. 1, 577-587. 

\bibitem{CW15} X. Chen, Y. Wang, \emph{On four-dimensional anti-self-dual gradient Ricci solitons}, J. Geom. Anal. 25 (2015), 1335-1343. 

\bibitem{CZ06} B.-L. Chen, X.-P. Zhu, \emph{Uniqueness of the Ricci flow on complete noncompact manifolds}, J. Differ. Geom. 74 (2006), no. 1., 119-154.

\bibitem{CZ06a} B.-L. Chen, X.-P. Zhu, \emph{Ricci flow with surgery on four-manifolds with positive isotropic curvature}, J. Differ. Geom. 74(2), 177-264 (2006).

\bibitem{CL20} J. Cho, Y. Li, \emph{Ancient solutions to the Ricci flow with isotropic curvature conditions}, Math. Ann. 387, 1009-1041 (2023).

\bibitem{CCD22} C. Cifarelli, R. J. Conlon, A. Deruelle, \emph{On finite time Type-I singularities of the K\"ahler-Ricci flow on compact K\"ahler surfaces}, J. Eur. Math. Soc. (2024).

\bibitem{CDS19} R. J. Conlon, A. Deruelle, S. Sun, \emph{Classification results for expanding and shrinking gradient K\"ahler-Ricci solitons}, Geom. Topol., 28(1):267-351, 2024.

\bibitem{Cro80} C. B. Croke, \emph{Some isoperimetric inequalities and eigenvalue estimates}, Annales Scientifiques de l'Ecole Normale Superieure 13 (1980): 419-435.

\bibitem{EMT11} J. Enders, R. M\"{u}ller, P. Topping, \emph{On Type-I singularities in Ricci flow}, Comm. Anal. Geom, 19(2011), no. 5, 905-922.

\bibitem{FIK03} M. Feldman, T. Ilmanen, D. Knopf, \emph{Rotationally symmetric shrinking and expanding gradient K\"ahler-Ricci solitons}, J.Differ.Geom. 65 (2003), no.2, 169-209.

\bibitem{Fra59} T. Frankel, \emph{Fixed points and torsion on K\"ahler manifolds}, Annals of Mathematics (2) 70
(1959): 1-8.

\bibitem{Ha95} R. S. Hamilton, \emph{The formation of singularities in the Ricci flow}, Surveys in Differential Geom., 2(1995), 7-136, International Press. 

\bibitem{HM11} R. Haslhofer, R. M\"{u}ller, \emph{A compactness theorem for complete Ricci shrinkers}, Geom. Funct. Anal., 21(2011), 1091-1116.

\bibitem{HM15} R. Haslhofer, R. M\"{u}ller, \emph{A note on the compactness theorem for 4d Ricci shrinkers}, Proc. Amer. Math. Soc., 143(2015), 4433-4437.

\bibitem{Hit75} N. Hitchin, \emph{On the curvature of rational surfaces}, Proc. Symp. Pure Math, vol. 27, Amer. Math. Soc., 1975, pp. 65-80.

\bibitem{HLW} S. Huang,  Y. Li, B. Wang, \emph{On the regular-convexity of Ricci shrinker limit spaces}, J. Reine Angew. Math. 771(2021), 99-136. 

\bibitem{Koiso90} N. Koiso, \emph{On rotationally symmetric Hamilton's equation for K\"ahler-Einstein metrics}, Recent topics in differential and analytic geometry, 327-337, Adv. Stud. Pure Math., 18-I, Academic Press, Boston, MA, 1990. 

 \bibitem{KL08} B. Kleiner, J. Lott, \emph{Notes on Perelman’s papers}, Geom. Topol. 12 (5) 2587-2855, 2008.

\bibitem{KL17} B. Kleiner, J. Lott, \emph{Singular Ricci flows I}, Acta Mathematica 219 (2017), no.~1, 65-134.

\bibitem{KW15} B. Kotschwar, L. Wang, \emph{Rigidity of asymptotically conical shrinking gradient Ricci solitons}, J. Differ. Geom., 100(2015), no. 1, 55-108. 

\bibitem{KW20} B. Kotschwar, L. Wang, \emph{A uniqueness theorem for asymptotically cylindrical shrinking Ricci solitons}, J. Differential Geom. 126(1): 215-295 (1 January 2024).

\bibitem{LLW21} H. Li, Y. Li, B. Wang, \emph{On the structure of Ricci shrinkers}, J. Funct. Anal., 280(2021), no.9, 108955.

\bibitem{LN20} X. Li, L. Ni, \emph{K\"ahler-Ricci shrinkers and ancient solutions with nonnegative orthogonal bisectional curvature}, Jour. Math. Pures Appl, 138(2020), 28-45.

\bibitem{LNW18} X. Li, L. Ni, K. Wang, \emph{Four-dimensional gradient shrinking solitons with positive isotropic curvature}, Int. Math. Res. Not., vol. 2018, no. 3, 949-959.

\bibitem{LHZW19} H. Li, B. Wang, \emph{The extension problem of the mean curvature flow(I)}, Invent. math. 218 (3), 721-777. 

\bibitem{LWs1} Y. Li, B. Wang, \emph{The rigidity of Ricci shrinkers of dimension four}, Trans. Amer. Math. Soc., 371(2019), no.10, 6949-6972. 

\bibitem{LW20} Y. Li, B. Wang, \emph{Heat kernel on Ricci shrinkers}, Calc. Var. Partial. Differ. Equ. 59 (2020), article 194.

\bibitem{LW23} Y. Li, B. Wang, \emph{Heat kernel on Ricci shrinkers (II)}, Acta Math Sci 44, 1639-1695 (2024).

\bibitem{LW22} Y. Li, B. Wang, \emph{Rigidity of the round cylinders in Ricci shrinkers}, J. Differential Geom. 127(2): 817-897 (June 2024).

\bibitem{Manin} Y. I. Manin, \emph{Cubic forms: Algebra, geometry, arithmetic}, North-Holland Mathematical Library, vol. 4 (2nd ed.), Amsterdam: North-Holland.

\bibitem{MT07} J. Morgan, G. Tian, \emph{Ricci Flow and the Poincar\'e Conjecture}, American Mathematical Society, (2007).

\bibitem{MW12} O. Munteanu, J. Wang, \emph{Analysis of weighted Laplacian and applications to Ricci solitons}, Commun. Anal. Geom. 20(2012), no.1, 55-94.

\bibitem{MW15} O. Munteanu, J. Wang, \emph{Topology of \ka Ricci solitons}, J. Differ. Geom. 100 (2015), no. 1, 109-128.

\bibitem{MW15a} O. Munteanu, J. Wang, \emph{Geometry of shrinking Ricci solitons}, Compos. Math. 151 (2015), no. 12, 2273-2300.

\bibitem{MW17} O. Munteanu, J. Wang, \emph{Positively curved shrinking Ricci solitons are compact}, J. Differ. Geom.,106(2017), no. 3, 499-505.

\bibitem{MW19} O. Munteanu, J. Wang, \emph{Structure at infinity for shrinking Ricci solitons}, Ann. Sci. \'{E}c. Norm. Sup\'{e}r.,  Serie 4, Volume 52 (2019),  no. 4, pp. 891-925.

\bibitem{Naber} A. Naber, \emph{Noncompact shrinking four solitons with nonnegative curvature}, J. Reine Angew. Math. 645(2010), 125-153. 

\bibitem{Na19} K. Naff, \emph{Shrinking Ricci solitons with positive isotropic curvature}, arXiv:1905.10305v2.

\bibitem{Nagata} M. Nagata, \emph{On rational surfaces. I. Irreducible curves of arithmetic genus 0 or 1}, Mem. Coll. Sci. Univ. Kyoto Ser. A Math., 32: 351-370. 

\bibitem{N05} L. Ni, \emph{Ancient solution to K\"ahler-Ricci flow}, Mathematical Research Letters, Volume 12 (2005), Number 5, Pages: 633-654.

\bibitem{NW} L. Ni, N. Wallach, \emph{On a classification of gradient shrinking solitons}, Math. Res. Lett. 15(2008), no. 5, 941-955. 

\bibitem{Pe1} G. Perelman, \emph{The entropy formula for the Ricci flow and its geometric applications}, arXiv:math.DG/0211159.

\bibitem{PW10} P. Petersen, W. Wylie, \emph{On the classification of gradient Ricci solitons}. Geom. Topol. 14(2010), no. 4, 2277-2300.

\bibitem{Shi89a} W.-X. Shi, \emph{Deforming the metric on complete Riemannian manifolds}, J. Differ. Geom., 30(1989), no. 2, 223-301.

\bibitem{ST08} N. Sesum, G. Tian, \emph{Bounding scalar curvature and diameter along the K\"ahler Ricci flow (after Perelman)}, J. Inst. Math. Jussieu 7(2008), no. 3, 575-587. 

\bibitem{Siu88} Y. T. Siu, \emph{The existence of K\"ahler-Einstein metrics on manifolds with positive anticanonical line bundle and a suitable finite symmetry group}, Ann. Math.,  127(1988), no. 3, 585-627. 

\bibitem{Tian90} G. Tian, \emph{On Calabi's conjecture for complex surfaces with positive first Chern class}, Inv. Math. Vol. 101, No. 1, 1990, 101-172. 

\bibitem{TY87}  G. Tian, S.-T. Yau, \emph{K\"ahler-Einstein metrics on complex surfaces with $C_1>0$}. Comm. Math. Phys. 112 (1987), no. 1, 175-203.

\bibitem{TZ00} G. Tian, X. Zhu, \emph{Uniqueness of K\"{a}hler-Ricci solitons}, Acta Math. 184 (2000), no. 2,  271-305. 

\bibitem{TZ02} G. Tian, X. Zhu, \emph{A new holomorphic invariant and uniqueness of K\"ahler-Ricci solitons}, Comment. Math. Helv. 77 (2002), no. 2, 297-325. 

\bibitem{BW17} B. Wang, \emph{The local entropy along Ricci flow---Part A: the no-local-collapsing theorems}, Camb. J. Math., 6:3(2018), 267-346. 

\bibitem{WZ04} X. J. Wang, X. Zhu, \emph{K\"ahler-Ricci solitons on toric manifolds with positive first Chern class}, Adv. Math. 188 (2004), no.~1, 87-103.

\end{thebibliography}
\end{document}